\numberwithin{equation}{section}
\numberwithin{figure}{section}
\newtheorem{theorem}{Theorem}[section]
\newtheorem{prop}[theorem]{Proposition}
\newtheorem{corollary}[theorem]{Corollary}
\theoremstyle{definition}
\newtheorem{remark}[theorem]{Remark}
\newtheorem{definition}[theorem]{Definition}
\newtheorem{example}[theorem]{Example}
\newcommand{\Def}[1]{\emph{#1}}
\newcommand*{\GP}{\mathbf{GP}} 
\newcommand*{\nGP}{\overline{\GP}}
\renewcommand*{\P}{\mathbf{P}} 
\newcommand*{\A}{\mathbf{A}} 
\newcommand*{\nA}{\overline{\A}}
\newcommand*{\Or}{\mathcal{O}}
\newcommand*{\OP}{\mathbf{OP}}
\newcommand*{\nOP}{\overline{\OP}}
\newcommand*{\nPI}{\overline{\mathbf{\Pi}}}
\newcommand{\R}{\mathbb{R}}
\newcommand*\1{\mathbbm{1}}
\newcommand{\poset}{\mathrm{poset}}
\newcommand{\conv}[1]{\mathrm{conv}(#1)}
\newcommand{\cone}{\mathrm{cone}}
\newcommand*{\id}{\mathrm{id}}
\newcommand{\pp}[1]{\mathcal{P}( #1 )}
\newcommand{\mv}[1]{\mathrm{max}( #1 )}
\newcommand{\sv}[1]{\mathrm{sym}( #1 )}
\definecolor{lgreen} {RGB}{180,210,100}
\definecolor{dblue}  {RGB}{20,66,129}
\definecolor{ddblue} {RGB}{11,36,69}
\definecolor{lred}   {RGB}{220,0,0}
\definecolor{nred}   {RGB}{224,0,0}
\definecolor{norange}{RGB}{230,120,20}
\definecolor{nyellow}{RGB}{255,221,0}
\definecolor{ngreen} {RGB}{98,158,31}
\definecolor{dgreen} {RGB}{78,138,21}
\definecolor{nblue}  {RGB}{28,130,185}
\definecolor{jblue}  {RGB}{20,50,100}
\definecolor{lgray}  {RGB}{211,211,211}
\definecolor{nnorange}{RGB}{245,193,141}
\definecolor{npurple}{HTML}{702963}
\definecolor{bblue}{HTML}{0671B7}
\definecolor{npink}{HTML}{e64072}
\title{Algebra and combinatorics of the Brion map on generalized permutahedra}
\author{Alvaro Cornejo, Mariel Supina}
\begin{document}

\begin{abstract}
The Brion morphism maps a generalized permutahedron to a collection of posets associated to its vertices.
We compute this map explicitly for the Hopf monoids of permutahedra, associahedra, and orbit polytopes, and we explore the dual Brion map of the primitive Lie monoids associated to these three Hopf monoids.
We describe the Lie monoid structure of the primitives in this dual setting and in particular we show that the Lie monoid of primitives of associahedra is isomorphic to the positive part of the Witt Lie algebra.
\end{abstract}

\maketitle

\section{Introduction}

Joyal \cite{JOYAL1981}, Joni and Rota \cite{joni_rota_1982}, Schmitt \cite{schmitt_1993}, and others used Hopf algebras as an algebraic framework to study combinatorial objects, like posets, graphs, and matroids, as these objects have natural operations of merging and breaking.
Aguiar and Mahajan \cite{AguiarMahajan} provided a useful framework to study combinatorial objects in the contexts of species and Hopf monoids, which keep track of more combinatorial information than Hopf algebras. 
Aguiar and Ardila \cite{AguiarArdila} showed that the family of polytopes called generalized permutahedra have the structure of a Hopf monoid and is the largest family of polytopes that support this structure.
They also showed that generalized permutahedra contain many other combinatorial Hopf monoids involving posets, graphs, matroids, and more. 

When studying algebraic structures, it is often useful to study its structure in two ways: internally by studying elements of the structure themselves, and externally by seeing how structure-preserving maps behave.
We explore these two approaches on $\nGP$, the Hopf monoid of normally equivalent generalized permutahedra, and the submonoids $\nPI$, $\nA$, and $\nOP$ involving permutahedra, associahedra, and orbit polytopes respectively, and a Hopf monoid morphism called Brion map from $\nGP$ to the Hopf monoid of posets $\P$.

In the internal perspective, in a cocommutative Hopf monoid $\mathbf{H}$, the primitive elements $\mathcal{P}(\mathbf{H})$ play an important role due to a variant of the Cartier--Milnor--Moore theorem \cite{Cartier2007,MilnorMoore,AguiarMahajan2}.
In the context of Hopf monoids this theorem states that if we have a cocommutative Hopf monoid, then $\mathbf{H}$ is isomorphic to the universal enveloping monoid on the primitives. 
While $\nGP$ is not cocommutative, it is commutative. 
This motivates us to consider its dual, which is cocommutative, allowing us to recover the dual Hopf monoid $\nGP^*$ from $\mathcal{P}(\nGP^*)$. 
There is also additional structure over a Hopf monoid and its primitive elements. 
A Hopf monoid $\mathbf{H}$ is a Lie monoid with Lie bracket given by the commutator $x \cdot y - y \cdot x$, and the primitives $\pp{\mathbf{H}}$ form a Lie submonoid.

We give the necessary background of Hopf monoids, duality,  and the Cartier--Milnor--Moore theorem in Section~\ref{sec:Background}. In Section~\ref{sec:hopfmonoidexamples}, we explain the Hopf monoid of posets $\P$, the Hopf monoid of generalized permtuahedra $\nGP$ along with the necessary polyhedral background, and the Hopf monoids $\nPI$, $\nA$, and $\nOP$.
In Section~\ref{sec:lieprimitives}, we apply the general theory to describe the coproduct and primitives of the dual Hopf monoids mentioned previously.
In Theorem~\ref{thm:posetprimitives} we classify the primitives of $\P^*$.
Moreover, in Theorem~\ref{thm:GPprimitives} we describe the primitives of $\nGP^*$ which consequently gives primitives of the submonoids $\nPI^*$, $\nA^*$, and $\nOP^*$ (Corollary~\ref{cor:subGPprimitives}). 
We also compute the Lie brackets of the primitives of $\nPI^*$ (Theorem~\ref{thm:liebracketpermutahedra}), of $\nA^*$ (Theorem~\ref{thm:liebracketassociahedra}), and of $\nOP^*$ (Theorem~\ref{thm:liebracketorbit}). 
In particular, for the Hopf monoid involving associahedra we find that the associated Lie algebra is isomorphic to the positive part of the Witt Lie algebra (Theorem~\ref{thm:liealgebraassociahedra}). 

In the external perspective, Ardila and Sanchez \cite{ArdilaSanchez} introduced several morphisms of Hopf monoids such as the Brianchon--Gram morphism and the Brion morphism.
The latter was motivated by Brion's theorem, which expresses the generating function of the lattice points of a polytope in terms of the generating function for the tangent cones of the vertices. In the context of Hopf monoids, the Brion map is a Hopf monoid morphism from $\nGP$ to $\P$, as the tangent cones of the vertices of generalized permutahedra can be encoded by posets. In Section~\ref{sec:brionmap}, we calculate the Brion map when restricting to the submonoids $\nPI$ (Theorem~\ref{thm:brionmappermutahedra}), $\nA$ (Theorem~\ref{thm:brionmapassociahedra}), and $\nOP$ (Theorem~\ref{thm:brionmaporbit}).
Each of these families has nice combinatorial structure.

In addition, we can consider the dual Brion map on Hopf algebras by applying the Fock functor to $B^*:\pp{\P^*} \to \pp{\GP^*}$, where it suffices to restrict our domains to the primitives in light of the Cartier--Milnor--Moore theorem, in Section~\ref{sec:DualBrionMap}. From our inclusion maps, we can restrict the images to $\pp{\nPI^*}$, $\pp{\nA^*}$, and $\pp{\nOP^*}$. We describe the dual Brion map involving these three Hopf algebras in Theorem~\ref{thm:dualbrionmap}.

\section{Background on Hopf monoids}\label{sec:Background}

In this section we introduce necessary context regarding Hopf monoids, duality, and the Cartier--Milnor--Moore Theorem.

\subsection{Hopf Monoids}\label{subsec:hopfmonoids}

We first define set and vector species, which form the foundation for a Hopf monoid.
Let $\mathbb{K}$ be a field of characteristic $0$, a \Def{vector species $\mathbf{H}$} (resp. \Def{set species $\mathrm{H}$}) assigns a $\mathbb{K}$-vector space $\mathbf{H}[I]$ (resp. a set $\mathrm{H}[I]$) to each finite set $I$, and a linear map $\mathbf{H}[\sigma]:\mathbf{H}[I]\to \mathbf{H}[J]$ (resp. a map $\mathrm{H}[\sigma]:\mathrm{H}[I] \to \mathrm{H}[J]$) to each bijection $\sigma:I\to J$ between two finite sets such that our collection of maps have the property that $\mathbf{H}[\operatorname{id}]=\operatorname{id}$ and $\mathbf{H}[\sigma\circ\tau]=\mathbf{H}[\sigma]\circ\mathbf{H}[\tau]$.
We say a vector species $\mathbf{H}$ is \Def{finite-dimensional} if for every finite set $I$, the space $\mathbf{H}[I]$ is finite dimensional.
A \Def{morphism of vector species} $f:\mathbf{H} \to \mathbf{K}$ is a collection of maps $f_I: \mathbf{H}[I] \to \mathbf{K}[I]$ which satisfy the naturality axiom: for each bijection $\sigma: I \to J$, we have $f_J \circ \mathbf{H}[\sigma] = \mathbf{K}[\sigma] \circ f_I$.

With the structure above we can now construct a Hopf monoid. 
A \Def{connected Hopf monoid in vector species} $\mathbf{H}$ is a vector species $\mathbf{H}$ with $\mathbf{H}[\emptyset] = \mathbb{K}$ endowed with two families of linear maps $\mu_{S,T}: \mathbf{H}[S] \otimes \mathbf{H}[T] \longrightarrow  \mathbf{H}[I]$ and $\Delta_{S,T}: \mathbf{H}[I] \longrightarrow \mathbf{H}[S] \otimes \mathbf{H}[T]$ for each decomposition $I=S \sqcup T$, where these maps satisfy naturality, unitality, associativity, and compatibility \cite{AguiarMahajan,AguiarArdila}.
We can similarly define a \Def{connected Hopf monoid in set species} $\mathrm{H}$ which is a set species $\mathrm{H}$ such that $\mathrm{H}[\emptyset]$ is a singleton  endowed with two families of linear maps $\mu_{S,T}: \mathrm{H}[S] \times \mathrm{H}[T] \longrightarrow  \mathrm{H}[I]$ and $\Delta_{S,T}: \mathrm{H}[I] \longrightarrow \mathrm{H}[S] \times \mathrm{H}[T]$ for each decomposition $I=S \sqcup T$ satisfying the same axioms as for Hopf monoids in vector species \cite{AguiarArdila}.
Note that we notate a Hopf monoid in vector species (or a vector species) in bold, while Hopf monoids in set species (or set species) are unbolded. 
We will mainly focus on Hopf monoids in vector species, but set species will be important when we discuss the Fock functor and duality, however we will first introduce more definitions relevant to Hopf monoids.

We call the collection of the $\mu$-maps the \Def{product} and the collection of the $\Delta$-maps the \Def{coproduct} of $\mathbf{H}$. 
A \Def{symmetric braiding} of a Hopf monoid $\mathbf{H}[I]$ is a natural isomorphism $\beta_{S,T}: \mathbf{H}[S] \otimes \mathbf{H}[T] \to \mathbf{H}[T] \otimes \mathbf{H}[S]$ \cite[section 1.1.2]{AguiarMahajan}.
In all our applications, $\beta$ swaps or ``braids'' the $\mathbf{H}[S]$ and $\mathbf{H}[T]$ components of a tensor.

Let $x \in \mathbf{H}[S]$, $y \in \mathbf{H}[T]$, and $z \in \mathbf{H}[I]$ for some decomposition $I = S \sqcup T$.
Intuitively and in the many contexts of this paper, the product will describe how to merge the combinatorial structures $x$ with labels in $S$ and $y$ with labels in $T$ into one structure $x \cdot y$ with labels in $I$.
The coproduct will describe how to break up a structure $z$ with labels in $I$ into ``two" structures, the \Def{restriction} $z \vert_S$ with labels in $S$ and the \Def{contraction} $z /_S$ with labels in $T$.
A \Def{morphism of Hopf monoids} $f: \mathbf{H} \to \mathbf{K}$ is a morphism of vector species which also commutes with the products and coproducts.
We say $\mathbf{H}$ is \Def{commutative} if for all $I=S \sqcup T$ with $x \in \mathbf{H}[S]$ and $y \in \mathbf{H}[T]$, it holds that $\mu_{S,T}(x \otimes y) = \mu_{T,S}(y \otimes x)$, and \Def{cocommutative} if $\Delta_{S,T}(z) = \beta_{T,S}(\Delta_{T,S}(z))$.

In some settings it is useful to consider Hopf algebras, which in our case can be obtained from combinatorial Hopf monoids by ``forgetting'' the labelings of objects by identifying those with the same cardinality. 
We will see that this gives insight into more global structure.
To get a Hopf algebra from a Hopf monoid we use the \Def{Fock functor} $\mathcal{F}$ which is a functor from the category of connected Hopf monoids in vector species to the category of graded Hopf algebras.
Aguiar and Mahajan \cite{AguiarMahajan} give four Fock functors but we will explain the one used in this paper and by Aguiar and Ardila \cite{AguiarArdila} next.
First note that for $[n]:=\{1,2,\ldots,n\}$, the symmetric group $S_n$ acts on $\mathbf{H}[n]:=\mathbf{H}[[n]]$.
Generally, define the Fock functor on $\mathbf{H}$ as
\[ 
    \mathcal{F}(\mathbf{H}):=\bigoplus_{n \geq 0} \mathbf{H}[n]_{S_n} = \bigoplus_{n \geq 0} \mathbf{H}[n]/\mathrm{span}\{w \cdot h - h : w \in S_n, h \in \mathbf{H}[n]\}
\]
where $\mathbf{H}[n]_{S_n}$ is the space of $S_n$-coinvariants of $\mathbf{H}[n]$ for the symmetric group $S_n$ \cite{AguiarMahajan}.

We are able to further describe the Fock functor since all the Hopf monoids in vector species we consider can be seen as linearizations of set species. 
Consider the \Def{linearization functor} $\mathsf{Set} \to \mathsf{Vec}$ from the category of sets to the category of vector spaces which sends a set to the vector space with formal basis elements given by elements of the set. 
Aguiar and Mahajan \cite[Section 8]{AguiarMahajan} \cite[Section 4]{AguiarMahajan2} show that applying this functor to a set species gives a vector species. 
Moreover, this functor takes a Hopf monoid in set species $\mathrm{H}$ to a Hopf monoid in vector species $\mathbf{H}$ where $\mathbf{H}[I]$ is the vector space with basis $\mathrm{H}[I]$. 
Now for a connected Hopf monoid in vector species $\mathbf{H}$ which is the linearization of $\mathrm{H}$, the Fock functor gives the graded Hopf algebra
\[
    \mathcal{F}(\mathbf{H}) = \bigoplus_{n \geq 0} \mathrm{span}\{\text{isomorphism classes of elements of } \mathrm{H}[I] \text{ for } |I|=n \}
\]
where elements $h_1 \in \mathrm{H}[I_1]$ and $h_2 \in \mathrm{H}[I_2]$ are said to be \Def{isomorphic} if there exists a bijection $\sigma:I_1 \to I_2$ such that $\sigma(h_1):=\mathrm{H}[\sigma](h_1)=h_2$ \cite{AguiarArdila}.

\subsection{Duality of Hopf Monoids}
We will now explain dual Hopf monoids in vector species, following the definitions of Aguiar and Mahajan \cite{AguiarMahajan,AguiarMahajan2}.

Given a vector species $\mathbf{H}$, the \Def{dual vector species} $\mathbf{H}^*$ is the vector species where for a finite set $I$, $\mathbf{H}^*[I]$ is the vector space dual to $\mathbf{H}[I]$, i.e. $\mathbf{H}[I]^*$, and for any bijection $\sigma:I \to J$, the map $\mathbf{H}^*[\sigma]:\mathbf{H}^*[I]\to\mathbf{H}^*[J]$ is given by $\mathbf{H}[\sigma^{-1}]^*$, i.e. the linear dual of the map $\mathbf{H}[\sigma^{-1}]$ by precomposition of $\mathbf{H}[\sigma^{-1}]$.

If we assume that a vector species $\mathbf{H}$ is the linearization of a Hopf monoid in set species $\mathrm{H}$, then the vector space $\mathbf{H}[I]^*$ has a canonical basis which we explain next. 
We know that $\mathbf{H}[I]$ has basis given by the set $\mathrm{H}[I]$, there is a dual basis for $\mathbf{H}^*[I]$ consisting of $b^*: \mathbf{H}[I] \to \mathbb{K}$ for each basis element $b\in\mathbf{H}[I]$ where $b^*(c)$ is $1$ if $c=b$ and $0$ otherwise.

For the remainder of this section on duality, we now assume that all vector species are finite dimensional.
In particular if a vector species $\mathbf{H}$ is finite dimensional, so is its dual $\mathbf{H}^*$.
Aguiar and Mahajan show that the duality of a vector species can be seen as a functor between the opposite category of vector species $\mathsf{Sp}^{\mathrm{op}}$ and the category of vector species $\mathsf{Sp}$.
Moreover, they show that applying the duality to a Hopf monoid $\mathbf{H}$ yields another Hopf monoid in vector species called the \Def{dual Hopf monoid} $\mathbf{H}^*$ in the dual vector species.
We will next explain the product and coproduct of a dual Hopf monoid.
The product of $\mathbf{H}^*$, denoted $\mu_{S,T}^*$, is given by the composition
\[
    \mathbf{H}^*[S] \otimes \mathbf{H}^*[T] = \mathbf{H}[S]^* \otimes \mathbf{H}[T]^* \xrightarrow{c} (\mathbf{H}[S] \otimes \mathbf{H}[T])^* \xrightarrow{(\Delta_{S,T})^*} \mathbf{H}[I]^* = \mathbf{H}^*[I],
\]
where the first arrow is canonical and an isomorphism since the vector spaces are finite dimensional, and where $(\Delta_{S,T})^*$ is the linear dual of the map $\Delta_{S,T}:\mathbf{H}[I] \to \mathbf{H}[S] \otimes \mathbf{H}[T]$ for decompositions $I = S \sqcup T$.
The coproduct of $\mathbf{H}^*$, denoted $\Delta_{S,T}^*$, is given by the composition
\[
    \mathbf{H}^*[I] \xrightarrow{(\mu_{S,T})^*} (\mathbf{H}[S] \otimes \mathbf{H}[T])^* \xrightarrow{c^{-1}} \mathbf{H}[S]^* \otimes \mathbf{H}[T]^* = \mathbf{H}^*[S] \otimes \mathbf{H}^*[T].
\]
where $(\mu_{S,T})^*$ is the linear dual of the map $\mu_{S,T}:\mathbf{H}[S] \otimes \mathbf{H}[T] \to \mathbf{H}[I]$ for decompositions $I = S \sqcup T$.

Since we mainly consider the dual Hopf monoid of a Hopf monoid in vector species which is the linearization of a Hopf monoid in set species, we can compute the product and coproduct more easily using basis elements. The following is outlined by Aguiar and Mahajan  \cite[Section 8.6]{AguiarMahajan} \cite[Section 4.5]{AguiarMahajan2} but we give a proof connecting their definitions here.

\begin{prop}\label{prop:dualprodcoprod}
    Let $\mathbf{H}$ be a Hopf monoid which is the linearization of the Hopf monoid in set species $\mathrm{H}$ and finite dimensional. 
    Suppose $I=S \sqcup T$ is some decomposition and that $x \in \mathbf{H}[S]$, $y \in \mathbf{H}[T]$, and $z \in \mathbf{H}[I]$ are basis elements. 
    The product $\mu_{S,T}^*$ and coproduct $\Delta_{S,T}^*$ of the dual Hopf monoid $\mathbf{H}^*$ on the dual basis elements is given by
    \[
        \mu_{S,T}^*(x^* \otimes y^*) = \sum_{\Delta_{S,T}(z) = x \otimes y} z^*, \qquad 
        \Delta^*_{S,T}(z^*) = \sum_{\mu_{S,T}(x \otimes y) = z} x^* \otimes y^*.
    \] 
    Consequently, the product and coproduct of $\mathbf{H}^*$ for arbitrary elements is also determined.
\end{prop}
\begin{proof}
    We will first consider the product of $\mathbf{H}^*$. 
    Suppose $x \in \mathbf{H}[S]$, $y \in \mathbf{H}[T]$ are basis elements.
    We know that $\mathbf{H}[I]$ has a basis given by the set $\mathrm{H}[I]$ and there is an associated canonical dual basis for $\mathbf{H}^*[I]$.
    By definition,
    \[
        \mu_{S,T}^*(x^* \otimes y^*) = (\Delta_{S,T})^* \circ (x \otimes y)^* = (x \otimes y)^* \circ \Delta_{S,T}.
    \]
    Let $z \in \mathbf{H}[I]$ be any basis element, then $(x \otimes y)^* \circ \Delta_{S,T} (z)$ gives $1$ if $\Delta_{S,T} (z)= x \otimes y$ and $0$ otherwise.
    Hence 
    \[
        (x \otimes y)^* \circ \Delta_{S,T} (z) = \sum_{\Delta_{S,T}(z) = x \otimes y} z^*
    \]
    since all $z^*$ gives the basis elements of $\mathbf{H}^*[I]$, proving the first equality.
    Now for the coproduct of $\mathbf{H}^*$, let $z \in \mathbf{H}[I]$ be a basis element of $\mathbf{H}[I]$.
    This gives another basis element $z^* \in \mathbf{H}^*[I]$. 
    The definition of the coproduct gives
    \[
        \Delta^*_{S,T}(z^*) = c^{-1} \circ (\mu_{S,T})^* \circ z^* = c^{-1} \circ (z^* \circ \mu_{S,T}).
    \]
    Since we have a basis of with elements of the form $(x \otimes y)^*$ for $(\mathbf{H}[S] \otimes \mathbf{H}[T])^*$  where $x \in \mathbf{H}[S]$ and $y \in \mathbf{H}[T]$ are basis elements, the map $z^* \circ \mu_{S,T}: \mathbf{H}[S] \otimes \mathbf{H}[T] \to \mathbb{K}$ is determined by where it sends $x \otimes y$.
    In particular $z^* \circ \mu_{S,T}(x \otimes y)$ is $1$ if $\mu_{S,T}(x \otimes y)=z$ and is $0$ otherwise, so
    \[
        c^{-1} \circ (z^* \circ \mu_{S,T}) = c^{-1} \circ \sum_{\mu_{S,T}(x \otimes y)=z} (x \otimes y)^* = \sum_{\mu_{S,T}(x \otimes y)=z} c^{-1}((x \otimes y)^*) = \sum_{\mu_{S,T}(x \otimes y)=z} x^* \otimes y^*.
    \]
    To prove the last claim, we explain only the product as the coproduct is similar. 
    Elements in $\mathbf{H}[S]^* \otimes \mathbf{H}[T]^*$ are linear combinations of $x^* \otimes y^*$ for $x \in \mathbf{H}[S]$, $y \in \mathbf{H}[T]$ basis elements.
    By applying linearity we can apply our equation to compute the product.
\end{proof}

As a consequence of Proposition~\ref{prop:dualprodcoprod}, we obtain the following known relationship between a Hopf monoid and its dual which plays a key role in our paper.
\begin{prop}\label{thrm:dualcom}
    Let $\mathbf{H}$ be a finite dimensional Hopf monoid in vector species that arrises from the linearization of a set species $\mathrm{H}$.
    If $\mathbf{H}$ is commutative (resp. cocommutative), then $\mathbf{H}^*$ is cocommutative (resp. commutative).
\end{prop}

Moreover, a connected Hopf monoid $\mathbf{H}$ results in a connected dual Hopf monoid $\mathbf{H}^*$.
While we always assume our Hopf monoids are connected in this paper, in general a Hopf monoid $\mathbf{H}$ does not need the assumption that $\mathbf{H}[\emptyset] = \mathbb{K}$ as we defined. 
More broadly, a Hopf monoid is \Def{connected} if $\mathbf{H}[\emptyset] \cong \mathbb{K}$.
Since the Hopf monoids we consider have $\mathbf{H}[\emptyset] = \mathbb{K}$, then by duality $\mathbf{H}[\emptyset] = \mathbb{K}^* \cong \mathbb{K}$, proving the dual is connected.  

Duality also gives a relationship between the primitive and indecomposable elements, which can be viewed as the ``basic building blocks'' of Hopf monoids.
The element $z\in\mathbf{H}[I]$ is \Def{primitive} if for any nontrivial decomposition $S \sqcup T = I$, we have $\Delta_{S,T}(z)=0$; it is \Def{indecomposable} if $\mu_{S,T}(x \otimes y) = z$ implies that $S$ or $T$ is the empty set, i.e. $x=1$ or $y=1$ respectively.
We denote the set of primitives of $\mathbf{H}$ by $\mathcal{P}(\mathbf{H})$.
We use the following result about finite-dimensional Hopf monoids \cite[Section 5.6]{AguiarMahajan2}:

\begin{prop}\label{thrm:dualpi}
    Let $\mathbf{H}$ be a finite dimensional Hopf monoid. 
    If $z \in \mathbf{H}[I]$ is primitive then $z^* \in \mathbf{H}^*[I]$ is indecomposable, and if $z \in \mathbf{H}[I]$ is indecomposable then $z^* \in \mathbf{H}^*[I]$ is primitive.
\end{prop}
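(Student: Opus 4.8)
The plan is to prove the two implications directly from the defining formula \eqref{eq: dual product and coproduct}, using the characterizations of primitive and indecomposable elements together with the pairing between dual bases. Throughout I will use that $z$ is a basis element of $\mathbf{H}[I]$, so that $z^*$ is the corresponding dual basis element, and that the coproduct of a basis element is a simple tensor of basis elements, which is exactly the hypothesis under which \eqref{eq: dual product and coproduct} was stated.

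\medskip
\noindent\textbf{First implication.} Suppose $z \in \mathbf{H}[I]$ is primitive; I want to show $z^*$ is indecomposable, i.e. that $\mu^*_{S,T}(x^* \otimes y^*) = z^*$ forces $S$ or $T$ to be empty. Using the left-hand formula in \eqref{eq: dual product and coproduct}, the coefficient of $z^*$ in $\mu^*_{S,T}(x^* \otimes y^*)$ is nonzero precisely when $\Delta_{S,T}(z) = x \otimes y$. Since $z$ is primitive, $\Delta_{S,T}(z) = 0$ for every nontrivial decomposition $S \sqcup T = I$, so no such $x \otimes y$ can equal a nonzero simple tensor unless the decomposition is trivial. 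Hence the only way $z^*$ appears in the image of $\mu^*_{S,T}$ is when $S = \emptyset$ or $T = \emptyset$, which is exactly indecomposability of $z^*$.

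\medskip
\noindent\textbf{Second implication.} Conversely, suppose $z$ is indecomposable; I want $z^*$ primitive, i.e. $\Delta^*_{S,T}(z^*) = 0$ for all nontrivial $S \sqcup T = I$. By the right-hand formula in \eqref{eq: dual product and coproduct}, $\Delta^*_{S,T}(z^*) = \sum_{\mu_{S,T}(x \otimes y) = z} x^* \otimes y^*$. Indecomposability of $z$ says that for a nontrivial decomposition there is no pair $(x,y)$ with $x \in \mathbf{H}[S]$, $y \in \mathbf{H}[T]$ nonempty and $\mu_{S,T}(x \otimes y) = z$; thus the indexing set of the sum is empty and $\Delta^*_{S,T}(z^*) = 0$. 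This gives primitivity of $z^*$.

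\medskip
The argument is essentially a direct dictionary translation through \eqref{eq: dual product and coproduct}, so there is little genuine difficulty. The one point requiring care is the bookkeeping of \emph{which} decompositions count as trivial: the definitions quantify over \emph{nontrivial} decompositions of $I$, and I must make sure the empty-set cases are handled consistently on both sides so that the empty-tensor contributions (corresponding to the unit and counit) do not obstruct the equivalence. As long as I restrict attention to basis elements and invoke the simple-tensor hypothesis that justifies \eqref{eq: dual product and coproduct}, both implications follow immediately, and the main obstacle is simply stating these conventions precisely rather than any substantive computation.
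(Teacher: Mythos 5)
Your proof is correct. Note, however, that the paper itself gives no argument for this proposition: it is quoted as a known fact about finite-dimensional Hopf monoids with a citation to the literature, so your direct verification actually supplies a proof the paper omits. Your route --- translating both implications through the dual-basis formula \eqref{eq: dual product and coproduct} --- is the natural one and works: for the first implication, primitivity of $z$ empties the indexing set $\{\Delta_{S,T}(z) = x \otimes y\}$ for every nontrivial decomposition (a nonzero simple tensor of basis elements can never equal $0$), so $z^*$ has coefficient zero in $\mu^*_{S,T}(x^*\otimes y^*)$ for all basis duals $x^*, y^*$, and by bilinearity in the image of $\mu^*_{S,T}$ altogether; for the second, indecomposability empties the indexing set of $\Delta^*_{S,T}(z^*)$. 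Two small points are worth making explicit. First, the definition of indecomposability of $z^*$ quantifies over arbitrary elements $x^* \in \mathbf{H}^*[S]$, $y^* \in \mathbf{H}^*[T]$, not just basis duals, so the linearity step you leave implicit (no linear combination of basis tensors can map onto $z^*$ if each basis tensor maps to something with vanishing $z^*$-coefficient) should be stated; it is immediate but it is the step that upgrades your basis-level computation to the actual definition. Second, the validity of \eqref{eq: dual product and coproduct} itself rests on finite-dimensionality of each $\mathbf{H}[I]$ (so that the dual basis spans) and on products and coproducts of basis elements being, respectively, basis elements and simple tensors of basis elements; you invoke the coproduct half of this hypothesis, but the formula for $\Delta^*_{S,T}$ also tacitly uses the product half, which holds for the Hopf monoids in this paper and in the finite-dimensional setting the proposition is stated for.
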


The proposition above can also be seen as a consequence of Proposition~\ref{prop:dualprodcoprod} for the Hopf monoids we consider. 

\subsection{The Cartier--Milnor--Moore Theorem}
\label{subsec:CCM}
    We will now explain the additional structure Hopf monoids and its primitive elements have.
    A \Def{Lie monoid in the category of vector species} is a pair $(\mathbf{L},\gamma)$ of a vector species and a family of functions called the \Def{Lie bracket} where $\gamma_{S,T}: \mathbf{L}[S] \otimes \mathbf{L}[T] \to \mathbf{L}[I]$ for each $S \sqcup T = I$ satisfy the following properties \cite{AguiarMahajan}:
    \begin{itemize}
        \item \Def{antisymmetry}, meaning that $\gamma_{S,T} + \gamma_{T,S} \circ \beta_{S,T} = 0$, and 
        \item the \Def{Jacobi identity}, which states that $\gamma_{} \circ (\gamma_{} \otimes \mathrm{id}) \circ (\mathrm{id} + \xi + \xi^2) = 0$.
        \end{itemize}
    Here, $\beta$ is the symmetric braiding map, and $\xi$ is the rotation
        \begin{center}
            \begin{tikzcd}
                {\mathbf{L}[R] \otimes \mathbf{L}[S] \otimes \mathbf{L}[T]} \arrow[rr, "{\mathrm{id} \otimes \beta_{S,T}}"] &  & {\mathbf{L}[R] \otimes \mathbf{L}[T] \otimes \mathbf{L}[S]} \arrow[rr, "{\beta_{R,T} \otimes \mathrm{id}}"] &  & {\mathbf{L}[T] \otimes \mathbf{L}[R] \otimes \mathbf{L}[S].}
            \end{tikzcd}
        \end{center}

\begin{theorem}[Proposition 1.26, \cite{AguiarMahajan}]\label{thrm:hopfarelie}
    We can regard any Hopf monoid $\mathbf{H}$ in vector species as a Lie monoid where $\gamma$ is the commutator
    $\gamma_{S,T}(x \otimes y) = \mu_{S,T}(x \otimes y) - \mu_{T,S}(y \otimes x) $
    for $x \in \mathbf{H}[S], y \in \mathbf{H}[T]$.
\end{theorem}

\begin{theorem}[Section 11.9.2, \cite{AguiarMahajan}]\label{thm:primitivesareliesub}
    For any Hopf monoid $\mathbf{H}$ in vector species, the primitives $\mathcal{P}(\mathbf{H})$ form a Lie submonoid under the commutator.
\end{theorem}

Due to the previous theorem, one can regard the primitives as a functor $\mathcal{P}: \mathsf{Hopf(Sp)} \to \mathsf{Lie(Sp)}$ from the category of Hopf monoids in vector species to the category of Lie monoids in vector species.
Moreover, since we only deal with connected Hopf monoids $\mathbf{H}$, applying the primitive functor results in a Lie monoid in \Def{positive species}, i.e. $\mathcal{P}(\mathbf{H})[\emptyset] = 0$ \cite[Section 11.9]{AguiarMahajan}.
Since our vector spaces are over a field $\mathbb{K}$ of characteristic $0$, Aguiar and Mahajan~\cite[Section 15.6.1]{AguiarMahajan} show that the primtives and our Fock functor commute, $\mathcal{F}(\mathcal{P}(\mathbf{H})) = \mathcal{P}(\mathcal{F}(\mathbf{H}))$, where this can be seen as an equality of graded Lie algebras. 

We now turn towards stating the Cartier--Milnor--Moore theorem, which allows us to recover a cocommutative Hopf monoid from its primitives. 
The \Def{free monoid} on a vector species $\mathbf{p}$ is $\mathcal{T}(\mathbf{p})$ where for a finite set $I$,
\[
    \mathcal{T}(\mathbf{p})[I]=\bigoplus_{I_1\sqcup \cdots\sqcup I_k = I} \mathbf{p}(I_1)\otimes\cdots\otimes\mathbf{p}(I_k),
\]
\cite[Section 6]{AguiarMahajan2}.
If $\mathbf{p}$ is a Lie monoid, its \Def{universal enveloping monoid} $\mathcal{U}(\mathbf{p})$ is a connected Hopf monoid obtained as the quotient of the free monoid by the ideal generated by elements of the form $$x\cdot y - y\cdot x -\gamma_{S,T}(x\otimes y)$$ for $x\in\mathbf{p}[S]$ and $y\in\mathbf{p}[T]$, \cite[Section~8.2]{AguiarMahajan2}.
We can see the universal enveloping monoid as a functor $\mathcal{U}:\mathsf{Lie(Sp)} \to \mathsf{Hop^o}$ from the category of Lie monoids in vector species to the category of connected Hopf monoids in vector species.

The following theorem tells when the universal enveloping monoid can be used to reconstruct a Hopf monoid from its Lie monoid of primitives.

\begin{theorem}[Cartier--Milnor--Moore \cite{Cartier2007,MilnorMoore}, Hopf monoid version \protect{\cite[Theorem 120]{AguiarMahajan2}}]\label{thrm:CMM}
The functors
\[
    \begin{tikzcd}
            \mathsf{Lie(Sp^+)} \arrow[r, shift left=1ex, "\mathcal{U}"{name=G}] & \mathsf{coHopf^o}\arrow[l, shift left=.5ex, "\mathcal{P}"{name=F}],
        \end{tikzcd}
\]
where $\mathsf{Lie(Sp^+)}$ is the category of Lie monoids in positive species and $\mathsf{coHopf^o}$ is the category of cocommutative connected Hopf monoids in vector species, form an adjoint equivalence. 
In other words, suppose $\mathbf{H}$ is a connected cocommutative Hopf monoid in vector species, then $\mathcal{U}(\mathcal{P}(\mathbf{H}))$ is isomorphic to $\mathbf{H}$.
In particular, the inclusion of $\mathcal{P}(\mathbf{H})$ into $\mathbf{H}$ extends into an isomorphism of Hopf monoids $\varphi: \mathcal{U}(\mathcal{P}(\mathbf{H})) \to\mathbf{H}$.
\end{theorem}

We will see that the connected Hopf monoid $\GP$ (and hence its submonoids) is commutative, hence the connected dual Hopf monoid is cocommutative by Theorem~\ref{thrm:dualcom}, and Theorem~\ref{thrm:CMM} applies.

\section{Our examples of Hopf monoids}\label{sec:hopfmonoidexamples}

We will now define the Hopf monoids that we consider in this paper given by Aguiar and Ardila~\cite{AguiarArdila}, namely $\mathbf{P}$, $\nGP$, and its submonoids $\nPI$, $\nA$, and $\nOP$ involving permutahedra, associahedra, and orbit polytopes respectively.

\subsection{The Hopf Monoid of Posets}\label{subsec:HopfofPosets}

If $p$ is a poset structure on a finite set $I$ and $I = S \sqcup T$, then the \Def{restriction of $p$ to $S$}, or $p\vert_S$, consists of the relations in $p$ on $S$ that only relate elements elements of $S$.
We also say $S$ is a \Def{lower set} of the poset $p$ when no element of $T$ is less than an element of $S$.
In other words, if $s \in S$ and $x \leq s$ for some $x \in I$, then $x \in S$. 

Given a finite set $I$, let $\mathrm{P}[I]$ denote the set of all posets on $I$.
Moreover, let $\P[I]$ be the vector space with basis indexed by poset structures on $I$, i.e. the linearization of $\mathrm{P}$.
The relabeling maps in this vector species are induced by relabeling the posets. 
The product $\mu$ and coproduct $\Delta$ for a $I = S \sqcup T$ be a decomposition on $\P$ are defined as follows:
\begin{itemize}
    \item the product
    $$\mu_{S,T}: \P[S] \otimes \P[T] \to \P[I],$$
    $$\mu_{S,T}(p_1 \otimes p_2) = p_1 \sqcup p_2.$$
    This is the disjoint union of the posets, i.e. $p_1 \sqcup p_2$ is a poset on $I$ with no relations between elements of $S$ and $T$. 
    
    \item the coproduct
    $$\Delta_{S,T}: \P[I] \to \P[S] \otimes \P[T],$$
    $$\Delta_{S,T}(p) = 
    \begin{cases}
        p\vert_S \otimes p\vert_T & \text{if } S \text{ is a lower set of } p, \\
        0 & \text{otherwise.}
    \end{cases}$$
\end{itemize}

\begin{example}[Coproduct in $\P$]
Let us coproduct of a poset with respect to two different decompositions:
\begin{figure}[H]
    \centering
    \begin{tikzpicture}[
    vertex/.style={draw=black,fill=black,circle,inner sep=1.5pt},
    redvertex/.style={draw=nred,fill=nred,circle,inner sep=1.5pt}
    ]
    \node at (-2.9,-.5) {\Large$\Delta_{{\color{nred}\{c,d\}},\{a,b\}}$\Huge$($};
    \coordinate[vertex] (b) at (-1,0);
    \coordinate[vertex] (a) at (-1.3,-.5);
    \coordinate[redvertex] (c) at (-.7,-.5);
    \coordinate[redvertex] (d) at (-1,-1);
    \node[anchor=south] at (b) {$b$};
    \node[anchor=east] at (a) {$a$};
    \node[anchor=west] at (c) {\color{nred}$c$};
    \node[anchor=north] at (d) {\color{nred}$d$};
    \draw[thick] (b)--(a);
    \draw[thick] (b)--(c);
    \draw[thick] (a)--(d);
    \draw[thick,nred] (c)--(d);
    \node at (0,-.5) {{\Huge$)$}\;$=$};
    
    \coordinate[redvertex] (c') at (1-0.4,-.25);
    \coordinate[redvertex] (d') at (1-0.4,-.75);
    \node[anchor=south] at (c') {\color{nred}$c$};
    \node[anchor=north] at (d') {\color{nred}$d$};
    \draw[thick,nred] (c')--(d');
    \node at (1.5-0.4,-.5){\Large$\otimes$};
    
    \coordinate[vertex] (b') at (2-0.4,-.25);
    \coordinate[vertex] (a') at (2-0.4,-.75);
    \node[anchor=north] at (a') {$a$};
    \node[anchor=south] at (b') {$b$};
    \draw[thick] (a')--(b');
\end{tikzpicture}
\hspace{3em}
\begin{tikzpicture}[xshift=-2.9,yshift=3.5,
    vertex/.style={draw=black,fill=black,circle,inner sep=1.5pt},
    redvertex/.style={draw=nred,fill=nred,circle,inner sep=1.5pt}
    ]
    \node at (-2.9,-3.5) {\Large$\Delta_{{\color{nred}\{b,c\}},\{a,d\}}$\Huge$($};
    \coordinate[redvertex] (b) at (-1,-3);
    \coordinate[vertex] (a) at (-1.3,-3.5);
    \coordinate[redvertex] (c) at (-.7,-3.5);
    \coordinate[vertex] (d) at (-1,-4);
    \node[anchor=south] at (b) {\color{nred}$b$};
    \node[anchor=east] at (a) {$a$};
    \node[anchor=west] at (c) {\color{nred}$c$};
    \node[anchor=north] at (d) {$d$};
    \draw[thick] (b)--(a);
    \draw[thick,nred] (b)--(c);
    \draw[thick] (a)--(d);
    \draw[thick] (c)--(d);
    \node at (0.1,-3.5) {{\Huge$)$}\;$=0.$};
    
    

\end{tikzpicture}
    \label{fig:poset_coproduct}
\end{figure}
\noindent In the first expression, $S = \{c,d\}$ is a lower set because no element in $T=\{a,b\}$ is less than any element of $S$, so we can express the coproduct as a tensor.
In the second expression, $\{b,d\}$ is not a lower set so the coproduct is $0$.
\end{example}

The Hopf monoid $\mathbf{P}$ is commutative but not cocommutative.

\subsection{The Hopf monoid of Generalized Permutahedra}\label{subsec:gp}

Before describing the Hopf monoid structure, we will explain the relevant geometric background for generalized permutahedra.

\subsubsection{Geometric background}\label{subsub:geobackground}
Let $I$ be a finite set and $\mathbb{R}^I$ be the real vector space whose vectors are $I$-tuples of real numbers. 
In other words, $\mathbb{R}^I = \{(a_i)_{i \in I} : a_i \in \mathbb{R} \}$. 
Let $\{e_i : i \in I\}$ be the standard basis for $\mathbb{R}^I$.
We also have the standard inner product for $\mathbb{R}^I$ where $\langle x,y\rangle = \sum_{i \in I}x_i y_i$ for $x,y \in \mathbb{R}^I$.
We denote the dual of $\mathbb{R}^I$ by $(\mathbb{R}^I)^*$ where $(\mathbb{R}^I)^* = \{y: \mathbb{R}^I \to \mathbb{R} : y \text{ is linear} \}$. 
Such a $y \in (\mathbb{R}^I)^*$ is called a \Def{linear functional}.

A \Def{polyhedron} $Q \subseteq \mathbb{R}^I$ is the set of solutions to a finite system of linear inequalities.
A \Def{polytope} is a bounded polyhedron, or alternatively the convex hull of a finite set of points in $\mathbb{R}^I$, where the \Def{convex hull} of $v_1,\ldots,v_n \in \mathbb{R}^I$ is 
\[
    \operatorname{conv}\left(v_1,\ldots,v_n\right) = \left\lbrace x \in \mathbb{R}^I : \exists \lambda_i \in \mathbb{R}, x=\lambda_1 v_1 + \cdots + \lambda_n v_n, \sum_{i=1}^n \lambda_i = 1, \lambda_i \geq 0  \right\rbrace.
\]
We can also define a polyhedral \Def{cone} $C \subseteq \mathbb{R}^I$ as a polyhedron with the property $\mu x \in C$ for any $x \in C$ and real number $\mu \geq 0$.
If we are given points $w_1,\ldots,w_n \in \mathbb{R}^I$ the \Def{cone generated} by these points is
\[
    \cone(w_1,\ldots,w_n) := \mathbb{R}_{\geq 0} w_1 + \cdots + \mathbb{R}_{\geq 0} w_n \subseteq \mathbb{R}^I.
\]

We will now discuss faces of polyhedra.
An \Def{admissible hyperplane} of a polyhedron $Q \subseteq \mathbb{R}^I$ is a hyperplane $H=\{x \in \mathbb{R}^I : \langle a,x \rangle = b\} \subseteq \mathbb{R}^I$ for some $a\in \mathbb{R}^I \setminus \{0\}$ and $b \in \mathbb{R}$ such that $\langle a,x \rangle \leq b$ for all $x \in Q$.
A \Def{face} of a polyhedron $Q \subseteq \mathbb{R}^I$ is $Q \cap H$ where $H$ is an admissible hyperplane.
We also define the polyhedron itself as a face and the empty set as a face and say a face is \Def{proper} face if it is nonempty nor the entire polyhedron. 
Moreover, nonempty faces of a polyhedron $Q \subseteq \mathbb{R}^I$ can also be seen as maximizing linear functionals which we explain next.
Proper faces $F$ are equal to $Q \cap H$ for some admissible hyperplane $H = \{x \in \mathbb{R}^I : \langle a,x \rangle = b\} \subseteq \mathbb{R}^I$ where $a\in \mathbb{R}^I \setminus \{0\}$ and $b \in \mathbb{R}$. 
We can consider $\langle a , \text{--} \rangle$ as a linear functional in $(\mathbb{R}^I)^*$, hence the points in $Q$ achieving the maximal value of  $\langle a , \text{--} \rangle$ are exactly the points in $F$. 
Since various hyperplanes can define $F$, there can be various such linear functionals.
Moreover, $Q$ itself can be seen as the points in $Q$ maximizing the linear functional given by the zero map.

Motivated by the face structure, we will see that there is an associated fan for polyhedra.
\begin{definition}
A \Def{fan} $\mathcal{F}$ is a collection of cones where,
\begin{itemize}
    \item If $C\in \mathcal{F}$ then all the faces of the cone $C$ are in $\mathcal{F}$
    \item For $C,D\in \mathcal{F}$, if $C\cap D\neq \emptyset$ then $C\cap D$ is a face of both $C$ and $D$.
\end{itemize}
We say a fan $\mathcal{G}$ is a \Def{coarsening} of another fan $\mathcal{F}$ (or $\mathcal{F}$ is a refinement of $\mathcal{G}$) if every cone of $\mathcal{F}$ is contained in a cone of $\mathcal{G}$.
\end{definition}

\begin{definition}
    Given a nonempty face $F$ of a polytope $P \subseteq \mathbb{R}^I$, the \Def{normal cone of $F$} is 
    $$
        \mathcal{N}_P(F) = \{y \in (\mathbb{R}^I)^* \mid y \text{ attains its maximal value on } P \text{ at every point in } F \}.
    $$
    In other words, $\mathcal{N}_P(F)$ is the cone of linear functionals $y$ such that the $y$-maximal face of $P$ contains the face $F$. 
    
    The \Def{normal fan} $\mathcal{N}_P$ is the fan consisting of the normal cones $\mathcal{N}_P(F)$ for every face of the polytope $P$.
    This is indeed a fan \cite{AguiarArdila}.
    
    We say that two polytopes are \Def{normally equivalent} if they have the same normal fan. Given a polytope $P$, the collection of all normally equivalent polytope is called the \Def{normal equivalence class} of $P$.
\end{definition}
See Figure~\ref{fig:normalfanequivalence} for an example of a geometric realization of the normal fan of a polytope and two normally equivalent polytopes.

\begin{figure}
     \centering
\begin{tikzpicture}[ every node/.style={scale=0.85},
	edge/.style={thick, color=nblue},
	facet/.style={fill=nblue,fill opacity=0.300000}]

\coordinate (A) at (0,2);
\coordinate (B) at (1,1);
\coordinate (C) at (1,-1);
\coordinate (D) at (-1,-1);
\coordinate (E) at (-1,1);


\draw[edge] (A) node[right]{} -- (B) node[left]{} -- (C) node[right]{} -- (D) node[right]{} -- (E) node[right]{} --cycle;

\fill[facet] (A) -- (B) -- (C) -- (D) -- (E) -- cycle;

\end{tikzpicture} \qquad
\begin{tikzpicture}[ every node/.style={scale=0.85},
	edge/.style={thick, color=nblue},
	facet/.style={fill=nblue,fill opacity=0.300000}]

\coordinate (A) at (0,2);
\coordinate (B) at (1.5,0.5);
\coordinate (C) at (1.5,-0.5);
\coordinate (D) at (-1,-0.5);
\coordinate (E) at (-1,1);


\draw[edge] (A) node[right]{} -- (B) node[left]{} -- (C) node[right]{} -- (D) node[right]{} -- (E) node[right]{} --cycle;

\fill[facet] (A) -- (B) -- (C) -- (D) -- (E) -- cycle;

\end{tikzpicture}
     \begin{tikzpicture}[scale=0.9, every node/.style={scale=0.85},
	edge/.style={thick, color=nblue},
	facet/.style={fill=nblue,fill opacity=0.300000}]

\coordinate (A) at ({-sqrt(2)},{sqrt(2)});
\coordinate (B) at ({sqrt(2)},{sqrt(2)});
\coordinate (C) at (-2,0);
\coordinate (D) at (2,0);
\coordinate (E) at (0,-2);

\shade[shading=radial, inner color=lgray]
(0,0) circle (2.3);

\draw[->,thick] (0,0) -- (A); 
\draw[->,thick] (0,0) -- (B);
\draw[->,thick] (0,0) -- (C); 
\draw[->,thick] (0,0) -- (D);
\draw[->,thick] (0,0) -- (E);



\end{tikzpicture} 
     \label{fig:nequivex}
    \caption{A pair of normally equivalent polytopes and their normal fan.}
    \label{fig:normalfanequivalence}
\end{figure}

We now introduce the families of polytopes which will play a key role in the Hopf monoid of generalized permutahedra.
For any finite set $I$, the \Def{standard permutahedron} $\pi_I$ in $\mathbb{R}^I$ is the convex hull of all points of the form $\sum_{i\in I} f(i) e_i\in\mathbb{R}^I$, where $f:I\to [n]$ is a bijection and $n=|I|$. 
See the top of Figure~\ref{fig:gpexample}.
The normal fan of $\pi_I$ is called the \Def{braid fan}, $\mathcal{N}_{\pi_I}$.
We can see this fan as the collection of faces of the \Def{braid arrangement} $\mathcal{B}_I$ in $(\mathbb{R}^I)^*$ \cite{AguiarArdila}.
This arrangement consists of $\binom{n}{2}$ hyperplanes defined by $x_i=x_j$ for $i,j \in I$ and $i \neq j$. 
    
\begin{definition}[\cite{AguiarArdila}]\label{def:gp}
    A \Def{generalized permutahedron} in $\mathbb{R}^I$ is a polytope whose normal fan is a coarsening of the braid fan $\mathcal{N}_{\pi_I}$.
    Equivalently, it is a polytope whose edge directions have the form $\{ e_i - e_j : i \neq j \}$; i.e. the edges are all parallel to those of the permutahedron.
\end{definition}
See Figure~\ref{fig:gpexample} for some examples of generalized permutahedra.

\begin{figure}
        \centering
        \tdplotsetmaincoords{70}{115}
\begin{tikzpicture}[scale=0.5, every node/.style={scale=0.75}, tdplot_main_coords,
	edge/.style={line width = 0.5pt, color=ngreen},
	facet/.style={fill=ngreen,fill opacity=0.300000}]

\coordinate (A) at (1,2,3);
\coordinate (B) at (1,3,2);
\coordinate (C) at (2,3,1);
\coordinate (D) at (3,2,1);
\coordinate (E) at (3,1,2);
\coordinate (F) at (2,1,3);

\draw[->] (0,0,0) -- (2,0,0) node[left]{$1$}; 
\draw[->] (0,0,0) -- (0,2,0) node[right]{$2$};
\draw[->] (0,0,0) -- (0,0,3) node[above]{$3$};

\draw[edge] (A) node[right]{} -- (B) node[right]{} -- (C) node[right]{} -- (D) node[below]{} -- (E) node[left]{} -- (F) node[left]{} -- cycle;

\fill[facet] (A) -- (B) -- (C) -- (D) -- (E) -- (F) -- cycle;

\end{tikzpicture}

\begin{tikzpicture}[scale=0.5, every node/.style={scale=0.75}, tdplot_main_coords,
	edge/.style={thick, color=ngreen},
	facet/.style={fill=ngreen,fill opacity=0.300000}]

\coordinate (A) at (1,2,3);
\coordinate (B) at (1,3,2);
\coordinate (C) at (2,3,1);
\coordinate (D) at (3.5,1.5,1);
\coordinate (E) at (3.5,1,1.5);
\coordinate (F) at (2,1,3);

\draw[->] (0,0,0) -- (2,0,0) node[left]{}; 
\draw[->] (0,0,0) -- (0,2,0) node[right]{};
\draw[->] (0,0,0) -- (0,0,3) node[above]{};

\draw[edge] (A) node[right]{} -- (B) node[right]{} -- (C) node[right]{} -- (D) node[below]{} -- (E) node[left]{} -- (F) node[left]{} -- cycle;

\fill[facet] (A) -- (B) -- (C) -- (D) -- (E) -- (F) -- cycle;

\end{tikzpicture}\quad\begin{tikzpicture}[scale=0.5, tdplot_main_coords,
	edge/.style={color=ngreen,thick},
	facet/.style={fill=ngreen,fill opacity=0.300000}]]

\coordinate (A) at (1,2,3);
\coordinate (B) at (1,4,1);
\coordinate (C) at (3,2,1);
\coordinate (D) at (3,1,2);
\coordinate (E) at (2,1,3);

\draw[->] (0,0,0) -- (3,0,0); 
\draw[->] (0,0,0) -- (0,3,0);
\draw[->] (0,0,0) -- (0,0,3);

\draw[edge] (A) -- (B) -- (C) -- (D) -- (E) -- cycle;

\fill[facet] (A) -- (B) -- (C) -- (D) -- (E) -- cycle;

\end{tikzpicture}\quad\begin{tikzpicture}[scale=0.5, tdplot_main_coords,
	edge/.style={color=ngreen,thick},
	facet/.style={fill=ngreen,fill opacity=0.300000}]]

\coordinate (A) at (1,2,3);
\coordinate (B) at (1,4,1);
\coordinate (C) at (4,1,1);
\coordinate (E) at (2,1,3);

\draw[->] (0,0,0) -- (3,0,0); 
\draw[->] (0,0,0) -- (0,3,0);
\draw[->] (0,0,0) -- (0,0,3);

\draw[edge] (A) -- (B) -- (C) -- (E) -- cycle;

\fill[facet] (A) -- (B) -- (C) -- (E) -- cycle;

\end{tikzpicture}
        \caption{Examples of generalized permutahedra in $\mathbb{R}^{[3]}$}
        \label{fig:gpexample}
\end{figure}

Next, we will explain how to construct special subfamilies of generalized permutahedra that will form the basis for our Hopf submonoids $\nA$ and $\nOP$.
The first subfamily of generalized permutahedra we define is {(Loday) associahedra} \cite{AguiarArdila, Loday}.
The following construction of the associahedron uses the Minkowski sum of two sets.
The \Def{Minkowski sum} of $A,B \subseteq \mathbb{R}^I$  is $A+B:=\{a+b : a \in A, b \in B\} \subseteq \mathbb{R}^I$.
For a finite set $I$ with a linear ordering $\ell$, the \Def{(Loday) associahedron} is defined as the Minkowski sum $a_\ell = \sum_{i \leq_\ell j} \Delta_{[i,j]_\ell}$ where $\Delta_{[i,j]_\ell} = \operatorname{conv}\{e_k : i \leq_\ell k \leq_\ell j\}$.
For example, suppose we have $I=[3]$ with the usual linear ordering. Then we have 
\[
    a_{[3]} = \sum_{1 \leq i \leq j \leq 3} \Delta_{[i,j]} = \Delta_{[1,1]} + \Delta_{[2,2]} + \Delta_{[3,3]} + \Delta_{[1,2]} + \Delta_{[2,3]} + \Delta_{[1,3]}
\]
which is shown in Figure~\ref{fig:associahedronSum}.
 \begin{figure}
        \centering
        \tdplotsetmaincoords{70}{150}

\begin{tikzpicture}[scale=.5, tdplot_main_coords,
	edge/.style={color=ngreen},
	facet/.style={fill=ngreen,fill opacity=0.300000}]]

\coordinate (A) at (1,2,3);
\coordinate (B) at (1,4,1);
\coordinate (C) at (3,2,1);
\coordinate (D) at (3,1,2);
\coordinate (E) at (2,1,3);

\draw[->] (0,0,0) -- (3,0,0); 
\draw[->] (0,0,0) -- (0,3,0);
\draw[->] (0,0,0) -- (0,0,3);

\draw[edge] (A) -- (B) -- (C) -- (D) -- (E) -- cycle;

\fill[facet] (A) -- (B) -- (C) -- (D) -- (E) -- cycle;

\end{tikzpicture}
\,
\begin{tikzpicture}[scale=0.5]
	\node[anchor=south] at (0,-5){$=$};
\end{tikzpicture}
\,
\begin{tikzpicture}[scale=0.6,
	edge/.style={thick,color=ngreen},
	facet/.style={fill=ngreen,fill opacity=0.300000}]]
\pgfmathsetmacro{\shifting}{11.5}

\coordinate (A) at (90:1);
\coordinate (B) at (210:1);
\coordinate (C) at (330:1);

\foreach \x in {0,2,4} {
    \node at (\x+1,0) {\Large $+$};
    \draw[color=ngreen,fill=ngreen] (\x,0) circle (2pt);
};

\draw[edge] ([shift={(5.75,0)}] A) -- ([shift={(5.75,0)}] C);

\node at (7.5,0) {\Large $+$};

\draw[edge] ([shift={(8.75,0.05)}] C) -- ([shift={(8.75,0.05)}] B);

\node at (10,0) {\Large $+$};

\draw[edge] ([shift={(\shifting,0)}] A) -- ([shift={(\shifting,0)}] B) -- ([shift={(\shifting,0)}] C) -- cycle;

\fill[facet] ([shift={(\shifting,0)}] A) -- ([shift={(\shifting,0)}] B) -- ([shift={(\shifting,0)}] C) -- cycle;

\end{tikzpicture}
        \caption{An example of the associahedron $a_{[3]}$}
        \label{fig:associahedronSum}
\end{figure}
Another subfamily of generalized permutahedra are orbit polytopes.
Let $x \in \mathbb{R}^I$, then the \Def{orbit polytope} $\Or(x)$ is the convex hull of all permutations of the coordinates of $x$.
That is, $\Or(x) = \conv{\sigma(x) \in \mathbb{R}^I : \sigma \in S_I}$ where $S_I$ is the symmetric group on the set $I$ which acts by permuting the coordinates. 
See Figure~\ref{fig:orbitexample} for an example of an orbit polytope given by $\conv{(0,1,1),(1,0,1),(1,1,0)}$.

 \begin{figure}
    \centering
    \tdplotsetmaincoords{70}{115}

\begin{tikzpicture}[scale=1.5, every node/.style={scale=0.85}, tdplot_main_coords,
	edge/.style={thick, color=ngreen},
	facet/.style={fill=ngreen,fill opacity=0.300000}]

\coordinate (A) at (0,1,1);
\coordinate (B) at (1,0,1);
\coordinate (C) at (1,1,0);

\draw[->] (0,0,0) -- (1.5,0,0) node[left]{x}; 
\draw[->] (0,0,0) -- (0,1.5,0) node[right]{y};
\draw[->] (0,0,0) -- (0,0,1.5) node[above]{z};

\draw[edge] (A) node[right]{$(0,1,1)$} -- (B) node[left]{$(1,0,1)$} -- (C) node[right]{$(1,1,0)$} -- cycle;

\fill[facet] (A) -- (B) -- (C) -- cycle;

\end{tikzpicture}
    \caption{The orbit polytope $\Or(x)$ for $x=(0,1,1) \in \mathbb{R}^{[n]}$}
    \label{fig:orbitexample}
\end{figure}

Generalized permutahedra are polytopes, but we can extend our definition to possibly include unbounded polyhedra.
An \Def{extended generalized permutahedra} $P \subseteq \mathbb{R}^I$ is a polyhedron whose normal fan is a coarsening of a subfan of the braid fan.
An important family of extended generalized permutahedra are {poset cones}.
The \Def{poset cone} of a poset $p$ is $\cone(p) :=  \cone(e_i - e_j : i \geq j \text{ in } p )$.
In Figure~\ref{fig:VertexPosetConeExample}, the poset on the right has the poset cone given by $\cone(e_2-e_1,e_1-e_3)$.

In fact, poset cones are exactly the tangent cones of generalized permutahedra up to translation.
The \Def{tangent cone} $\cone_F(P)$ at a face $F$ of a polytope $P$ is the cone generated by all of the directions of edges with at least one endpoint in $F$, oriented to point out of $F$.
A \Def{vertex cone} is a tangent cone at a vertex $v$, denoted by $\mathrm{cone}_v(P)$.
This gives us a connection between posets and extended generalized permutahedra.
See Figure~\ref{fig:VertexPosetConeExample} for an example of a vertex cone which is also a (translated) poset cone shown in the same figure.
\begin{figure}
        \centering
        \scalebox{0.8}{ \tdplotsetmaincoords{54.75}{135}

\begin{tikzpicture}[scale=1.2, every node/.style={scale=0.85}, tdplot_main_coords,
	edge/.style={thick, color=ngreen},
	facet/.style={fill=ngreen,fill opacity=0.300000}]

\coordinate (A) at (1,2,3);
\coordinate (B) at (1,3,2);
\coordinate (C) at (2,3,1);
\coordinate (D) at (3,2,1);
\coordinate (E) at (3,1,2);
\coordinate (F) at (2,1,3);







\draw[edge] (A) node[right]{$(1,2,3)$} -- (B) node[right]{$(1,3,2)$} -- (C) node[right]{$(2,3,1)$} -- (D) node[below]{$(3,2,1)$} -- (E) node[left]{$ (3,1,2)$} -- (F) node[left]{$(2,1,3)$} -- cycle;

\fill[facet] (A) -- (B) -- (C) -- (D) -- (E) -- (F) -- cycle;

\end{tikzpicture}
\tdplotsetmaincoords{54.75}{135}
\hspace{-1em}\begin{tikzpicture}[scale=1.2, every node/.style={scale=0.85}, tdplot_main_coords,
	edge/.style={thick, color=ngreen},
	facet/.style={fill=ngreen,fill opacity=0.300000},ray/.style={color=norange}]

\coordinate (A) at (1,2,3);
\coordinate (B) at (1,3,2);
\coordinate (C) at (2,3,1);
\coordinate (D) at (3,2,1);
\coordinate (E) at (3,1,2);
\coordinate (F) at (2,1,3);







\draw[edge] (A) node[right]{$(1,2,3)$} -- (B) node[right]{$(1,3,2)$} -- (C) node[right]{$(2,3,1)$} -- (D) node[below]{$(3,2,1)$} -- (E) node[left]{$ (3,1,2)$} -- (F) node[left]{$(2,1,3)$} -- cycle;

\fill[facet] (A) -- (B) -- (C) -- (D) -- (E) -- (F) -- cycle;

\shade[top color = norange, middle color = white, opacity=0.2, shading angle = 30]
(2,1,3) -- (0,3,3) -- (2,3,1) -- (4,1,1) -- cycle;


\draw[ray,->,thick] (2,1,3) -- (1,2,3);

\node[below,norange] at (1.2,1.8,2.9) {$e_2 - e_1$};

\draw[ray,->,thick] (2,1,3) -- (3,1,2);
\node[right,norange] at (2.8,1,2.2) {$e_1 - e_3$};


\end{tikzpicture}
\begin{tikzpicture}[scale=1.2,vertex/.style={draw=black,fill=black,circle,inner sep=1.5pt}]
    \coordinate[vertex] (2) at  (0,1);
    \node[right] at (2) {$2$};
    
    \coordinate[vertex] (1) at (0,0);
    \node[right] at (1) {$1$};
    
    \coordinate[vertex] (3) at (0,-1);
    \node[right] at (3) {$3$};
    
    \node[draw,opacity=0] at (0,-1.65) {$ $};
    
    \draw[thick] (2) -- (1) -- (3);
    
    \node[left] at (-0.75,0.5) {$2 \geq 1$};
    \node[left] at (-0.75,-0.5) {$1 \geq 3$};
\end{tikzpicture}}
         \caption{The ray generators of $\cone_{(2,1,3)}(\pi_{[3]})$, and the associated poset}
     \label{fig:VertexPosetConeExample}
 \end{figure}

\begin{remark}
    There is also an related construction to poset cones called the order cone. For a poset $p$ over $I$ the \Def{order cone} is
    \[
    \mathcal{K}_p = \{ x \in \mathbb{R}_{\geq 0}^I \mid x_i \leq x_j \text{ if } i \leq j \text{ in } p\}
    \]
    Notice that the defining normals of the faces are in the directions of the form $e_i-e_j$ since the inequalities can also be expressed as $(e_i - e_j) \cdot x \leq 0$, so the order cone is dual to the poset cone.
    Since the dual cone of a vertex cone is a normal cone, for generalized permutahedra the normal fan can be seen as a collection of order cones. 
\end{remark}

\subsubsection{The Product and Coproduct of Generalized Permutahedra}\label{subsubsec:hopfGP}

We will now give the Hopf monoid structure for generalized permutahedra, given by Aguiar and Ardila \cite{AguiarArdila}.
Given a set $I$, let $\mathrm{GP}[I]$ denote the set of all generalized permutahedra that live in $\mathbb{R}^I$.
This is an infinite set, but up to normal equivalence there is only a finite number of normally equivalent generalized permutahedra.
This can be seen by the fact that each polytope corresponds to a coarsening of the braid fan, and there are only finitely many coarsened fans.
We define $\overline{\mathrm{GP}}[I]$ to denote the set of equivalence classes of normally equivalent generalized permutahedra, and $\overline{\mathrm{GP}}$ is a set species \cite{AguiarArdila}.
We can create a finite dimensional vector species $\nGP$ of generalized permutahedra up to normal equivalence by defining $\nGP[I]$ to be a real vector space with a basis indexed by generalized permutahedra that live in $\mathbb{R}^I$, i.e. the linearization of $\overline{\mathrm{GP}}$. 
The relabeling maps in this species are induced by relabeling the basis of $\mathbb{R}^I$.
The \Def{Hopf monoid of generalized permutahedra up to normal equivalence} $\nGP$ is the vector species described previously with the following product and coproduct.
Given any two generalized permutahedra $P\subseteq\R^S$ and $Q\subseteq\R^T$ for finite sets $S$ and $T$, their \Def{product} is given by
        \[
        \mu_{S,T}(P \otimes Q) = P \times Q := \{(p,q):p\in P,q\in Q\}\subseteq\R^S\times \R^T = \R^I,
        \]
which is again a generalized permutahedron.
Note that this product is commutative in the sense of Hopf monoids. 
We further have the following theorem defining the coproduct $\Delta_{S,T}(P) = P|_S \otimes P/_S$.
First, for a polytope $P\subseteq \mathbb{R}^I$ we say denote $\1_S:=\sum_{i\in S} e_i$ for $S \subseteq I$ and say the $\1_S$-maximal face of $P$ is the face of $P$ which maximizes the linear functional $\langle \1_S, \text{--} \rangle$.
\begin{theorem}[\cite{Fujishige}]\label{thrm:GP coproduct}
    Let $P\in \GP[I]$ and suppose $I=S\sqcup T$.
    Then the $\1_S$-maximal face of $P$ can be decomposed uniquely as a product of polytopes $P|_S \times P/_S$, where $P|_S\in \GP[S]$ and $P/_S\in\GP[T]$.
\end{theorem}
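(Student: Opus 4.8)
The plan is to pass to the description of generalized permutahedra as base polytopes of submodular functions, which is the setting of Fujishige's original result. Writing $x(A) := \sum_{i \in A} x_i$, I would associate to $P$ its submodular function $z \colon 2^I \to \R$ defined by $z(A) = \max_{x \in P} x(A)$, so that
\[
P = \{x \in \R I : x(A) \le z(A) \text{ for all } A \subseteq I, \ x(I) = z(I)\};
\]
this is the standard translation between the edge-direction/normal-fan description in \Cref{def:gp} and submodular systems. Since $\1_S(x) = x(S) \le z(S)$ on all of $P$, with equality attained by the very definition of $z$, the $\1_S$-maximal face is precisely $F = \{x \in P : x(S) = z(S)\}$.

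Next I would produce the two candidate factors from the restriction and contraction of $z$. Define $z_S \colon 2^S \to \R$ by $z_S(A) = z(A)$, and $z_T \colon 2^T \to \R$ by $z_T(B) = z(S \cup B) - z(S)$. Both are submodular (restriction and contraction preserve submodularity), so their base polytopes $P|_S := B(z_S) \subseteq \R S$ and $P/_S := B(z_T) \subseteq \R T$ are generalized permutahedra. The core of the argument is to prove $F = P|_S \times P/_S$ by double inclusion, using $\R I = \R S \oplus \R T$ and writing $x = (x_S, x_T)$. The inclusion $F \subseteq P|_S \times P/_S$ is the routine direction: for $x \in F$ and $A \subseteq S$ one has $x_S(A) = x(A) \le z(A) = z_S(A)$ with $x_S(S) = z(S)$, while for $B \subseteq T$ subtracting the tight equation $x(S) = z(S)$ from $x(S \cup B) \le z(S \cup B)$ gives $x_T(B) \le z_T(B)$, with $x_T(T) = z(I) - z(S) = z_T(T)$.

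The reverse inclusion is where the real content lies, and it rests on a single use of submodularity, which I expect to be the main obstacle in the sense that it is the only place the defining hypothesis is essential. Given $x_S \in P|_S$ and $x_T \in P/_S$, set $x = (x_S, x_T)$; the equalities $x(S) = z(S)$ and $x(I) = z(I)$ are immediate, so it remains to verify $x(A) \le z(A)$ for every $A \subseteq I$. Splitting $A = A_S \sqcup A_T$ with $A_S \subseteq S$ and $A_T \subseteq T$, I would apply submodularity to the pair $(A, S)$, noting $A \cup S = S \cup A_T$ and $A \cap S = A_S$, to obtain
\[
z(A) + z(S) \ge z(S \cup A_T) + z(A_S),
\]
which rearranges to $z(A) \ge z_S(A_S) + z_T(A_T) \ge x_S(A_S) + x_T(A_T) = x(A)$, as desired.

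Finally, uniqueness follows by projection: if $F = A_1 \times B_1 = A_2 \times B_2$ with factors in the complementary coordinate subspaces $\R S$ and $\R T$, then the coordinate projections force $A_1 = \pi_S(F) = A_2$ and $B_1 = \pi_T(F) = B_2$. As an alternative that stays entirely within the paper's framework, one could first observe that every edge of $F$ is parallel to some $e_i - e_j$ with $i,j$ both in $S$ or both in $T$ (an edge direction $e_i - e_j$ survives into the $\1_S$-maximal face only when $\1_S(e_i - e_j) = 0$) and then upgrade this ``no crossing edges'' property to the product structure via the $1$-skeleton; even so, I would expect the submodular inequality above to give the cleanest route to the reverse inclusion.
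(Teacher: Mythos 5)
The paper never proves this statement itself; it is quoted directly from Fujishige, so there is no internal argument to compare against. Your proposal is a correct and complete reconstruction of the standard proof in exactly the setting the citation points to: the double-inclusion argument is sound (the forward direction uses only the tight constraint $x(S)=z(S)$, and your single application of submodularity to the pair $(A,S)$, giving $z(A) \ge z_S(A_S) + z_T(A_T)$, is precisely the step that makes the reverse inclusion work), and the uniqueness-by-projection observation is valid since the face $F$ is nonempty. The one ingredient you import without proof is the equivalence between \Cref{def:gp} and the description of $P$ as the base polytope of the submodular function $z(A)=\max_{x\in P}x(A)$; that equivalence is itself the substantive content of the Fujishige-style theory being cited, so leaning on it is reasonable here, but it is worth being aware that your proof is not independent of that machinery --- it relocates the theorem's difficulty into that translation rather than eliminating it.
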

\begin{theorem}[\cite{AguiarArdila}]\label{thrm:GPHopf}
    $\GP$ is a commutative Hopf monoid under this product and coproduct.
\end{theorem}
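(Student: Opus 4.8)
The plan is to verify the defining axioms of a connected Hopf monoid one by one, leaning on the two facts already in hand: that the Cartesian product of generalized permutahedra is again one, and that the $\1_S$-maximal face of $P$ factors uniquely as $P|_S \times P/_S$ (\Cref{thrm:GP coproduct}). The organizing tool is the \emph{$y$-maximal face} $P_y \subseteq P$ cut out by a linear functional $y \in (\R I)^*$; the whole structure is governed by two geometric lemmas describing how $P_y$ behaves, and once these are in place the axioms reduce to bookkeeping on the ground set $I$.

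For the product maps, naturality is immediate since relabeling $I$ commutes with forming the Cartesian product; unitality holds because $\GP[\emptyset] = \R$ is spanned by the unique point of $\R\emptyset$ and $P \times \{\mathrm{pt}\} = P$ under the canonical identification $\R(S \sqcup \emptyset) \cong \R S$; associativity is the associativity of the Cartesian product; and commutativity --- already noted in the text --- is exactly the statement that $P \times Q$ and $Q \times P$ agree under the coordinate-swapping braiding $\beta_{S,T}$. For the coproduct, \Cref{thrm:GP coproduct} makes $\Delta_{S,T}$ well defined, and naturality again follows from the compatibility of relabeling with maximal faces. Counitality is the observation that $\1_\emptyset = 0$ and $\1_I$ are both constant on $P$ --- the latter because every generalized permutahedron lies in a constant-coordinate-sum hyperplane --- so their maximal faces are all of $P$, yielding the canonical identifications $\Delta_{\emptyset,I}(P) = 1 \otimes P$ and $\Delta_{I,\emptyset}(P) = P \otimes 1$.

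The two geometric lemmas I would isolate are: (i) a \emph{factorization lemma} stating that for a functional with no interaction across the two blocks, $y = (y_S, y_T) \in (\R S)^* \oplus (\R T)^*$, one has $(P \times Q)_y = P_{y_S} \times Q_{y_T}$, since such a functional is maximized on the two factors independently; and (ii) a \emph{refinement principle} combining the perturbation identity $(P_{y_1})_{y_2} = P_{y_1 + \varepsilon y_2}$ for all small $\varepsilon > 0$ with the fact that $P_y$ depends only on the closed braid cone containing $y$, equivalently on the ordered set composition it induces. Granting these, coassociativity for $I = S \sqcup T \sqcup U$ is immediate: the two iterated coproducts assemble, via (ii), into the faces of $P$ maximal for $\1_S + \varepsilon\1_T$ and for $\1_{S\sqcup T} + \varepsilon\1_S$ respectively, and both functionals are constant on the blocks $S, T, U$ with strictly decreasing values, hence lie in the same braid cone and cut out the same face; unique factorization (\Cref{thrm:GP coproduct}) then splits it into the three tensor factors identically on both sides.

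The main obstacle, and the step I would treat most carefully, is the compatibility (bialgebra) axiom relating $\Delta_{S',T'} \circ \mu_{S,T}$ to a product of coproducts through the braiding \cite{AguiarMahajan}. Writing $A = S \cap S'$, $B = S \cap T'$, $C = T \cap S'$, $D = T \cap T'$, the functional $\1_{S'} = \1_A + \1_C$ has support disjoint from $T' = B \sqcup D$, so the factorization lemma (i) gives $(P \times Q)_{\1_{S'}} = P_{\1_A} \times Q_{\1_C}$; applying \Cref{thrm:GP coproduct} to each factor yields $(P|_A \times P/_A) \times (Q|_C \times Q/_C)$. The content of the axiom is that regrouping the restriction factors (indexed by $A, C$, forming the $S'$-side) and the contraction factors (indexed by $B, D$, forming the $T'$-side) reproduces the right-hand side, and the delicate point is verifying that the reordering of tensor slots forced by this regrouping is exactly the braiding $\beta_{B,C}$ prescribed by the axiom. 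This is purely combinatorial once (i) is established, but keeping the index bookkeeping and the braiding consistent is where an error would most likely hide.
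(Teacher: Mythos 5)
The paper gives no proof of this statement---it is quoted directly from \cite{AguiarArdila}---so there is no internal argument to compare against; your proposal is, in effect, a reconstruction of the cited proof, and it is correct. Your two geometric lemmas (independent maximization over a product, and the perturbation/braid-cone principle) are both true and standard, and they are precisely the tools Aguiar--Ardila use: coassociativity reduces to the fact that $\1_S + \varepsilon\1_T$ and $\1_{S\sqcup T} + \varepsilon\1_S$ induce the same ordered set composition $(S,T,U)$, and the compatibility axiom reduces, via the factorization $(P\times Q)_{\1_{S'}} = P_{\1_A}\times Q_{\1_C}$ together with the uniqueness in \Cref{thrm:GP coproduct}, to the regrouping $(P|_A\times Q|_C)\otimes(P/_A\times Q/_C)$, which is exactly the right-hand side of the axiom after the braiding $\beta_{B,C}$. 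The only caveat is that your write-up is an outline rather than a full verification---in particular, coassociativity also quietly uses lemma (i) to see that maximizing $\1_S$ over $P|_{S\sqcup T}\times P/_{S\sqcup T}$ touches only the first factor---but every asserted step is true and the structure of the argument is complete.
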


There is also a Hopf monoid of extended generalized permutahedra $\GP^+$ obtained in a similarly way by considering the Hopf monoid of extended generalized permutahedra in set species ${\mathrm{GP}}^+$ and linearizing.
We have seen that posets can be represented as cones by their poset cones.
This induces a Hopf monoid structure on poset cones that realizes them as a submonoid of extended generalized permutahedra \cite[Proposition 3.4.6]{AguiarArdila}.

\begin{theorem}[\cite{AguiarArdila}]
    Identifying each poset $p$ with the corresponding poset cone $\cone(p)$ embeds $\P$ as a Hopf submonoid of extended generalized permutahedra.
\end{theorem}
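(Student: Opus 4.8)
The plan is to verify that the linear map $\Phi\colon \P \to \GP^+$ defined on basis elements by $\Phi(p) = \cone(p)$ is an injective morphism of Hopf monoids whose image is closed under the structure maps; since poset cones are themselves extended generalized permutahedra, this exhibits $\P$ as a Hopf submonoid of $\GP^+$. Naturality is immediate: a relabeling bijection $\sigma\colon I \to J$ acts on $\GP^+$ by permuting the coordinates of $\R I$, and this carries the generators $e_i - e_j$ of $\cone(p)$ to the generators $e_{\sigma(i)} - e_{\sigma(j)}$ of $\cone(\sigma \cdot p)$, so $\Phi$ commutes with relabeling.

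For injectivity I would recover the poset from its cone by proving
\[
 i \geq_p j \quad\Longleftrightarrow\quad e_i - e_j \in \cone(p).
\]
The forward direction is trivial since $e_i - e_j$ is then a generator. For the converse, write $e_i - e_j = \sum_{a \geq_p b} c_{ab}(e_a - e_b)$ with $c_{ab} \ge 0$ and read the right-hand side as a flow on the directed graph that has an edge $b \to a$ for each relation $a \geq_p b$. The coordinatewise excess conditions force this to be a unit flow with source $j$ and sink $i$; because the relation graph of a poset is acyclic, every flow decomposition consists solely of directed paths from $j$ to $i$, and transitivity then yields $i \geq_p j$. Thus distinct posets produce distinct cones and $\Phi$ is injective on the basis, hence injective.

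Compatibility with the product is a short orthogonality check. The generators of $\cone(p_1 \sqcup p_2)$ are exactly the relations of $p_1$, lying in $\R S$, together with the relations of $p_2$, lying in $\R T$; since these span orthogonal coordinate subspaces, $\cone(p_1 \sqcup p_2) = \cone(p_1) \times \cone(p_2) = \mu_{S,T}(\Phi(p_1) \otimes \Phi(p_2))$, which matches the disjoint-union product on $\P$.

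The crux, and the step I expect to be the main obstacle, is compatibility with the coproduct, since it requires analyzing the $\1_S$-maximal face of $\cone(p)$ and reconciling the unbounded case with the coproduct convention on $\GP^+$. Evaluating $\1_S$ on a generator gives $\1_S(e_i - e_j) = \1[i \in S] - \1[j \in S]$, so $\1_S$ is bounded above on $\cone(p)$, with maximum $0$ attained at the apex, precisely when no generator $e_i - e_j$ has $i \in S$ and $j \in T$; given that the generators come from relations $a \geq_p b$, this is exactly the condition that $S$ be a lower set of $p$. When $S$ is a lower set, a nonnegative combination of generators lies on the face $\{\1_S = 0\}$ if and only if it uses only generators with both endpoints in $S$ or both in $T$, and these are precisely the generators of $\cone(p|_S)$ and $\cone(p|_T)$; as they span orthogonal subspaces, the maximal face decomposes uniquely as $\cone(p|_S) \times \cone(p|_T)$, so by \Cref{thrm:GP coproduct} we obtain $\Delta_{S,T}(\cone(p)) = \cone(p|_S) \otimes \cone(p|_T) = \Phi(p|_S) \otimes \Phi(p|_T)$. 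When $S$ is not a lower set, some generator increases $\1_S$ without bound, the $\1_S$-maximal face lies at infinity, and the $\GP^+$ coproduct returns $0$, matching the vanishing of the poset coproduct. Having checked naturality, injectivity, and compatibility with both the product and the coproduct, I conclude that $\Phi$ is an embedding of Hopf monoids onto the submonoid of poset cones.
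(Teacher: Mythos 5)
Your proof is correct, but note that the paper does not actually prove this statement: it is imported verbatim from Aguiar and Ardila \cite{AguiarArdila} (cited as Proposition 3.4.6 in the background section), so there is no internal argument to compare against. Your reconstruction hits exactly the points that the cited proof must address: the flow-decomposition/acyclicity argument showing $e_i-e_j\in\cone(p)$ if and only if $i\geq_p j$ (hence distinct posets give distinct cones), the orthogonal-splitting argument identifying $\cone(p_1\sqcup p_2)$ with $\cone(p_1)\times\cone(p_2)$, and, most importantly, the observation that $\1_S$ is bounded above on $\cone(p)$ precisely when $S$ is a lower set of $p$, in which case the $\1_S$-maximal face splits uniquely as $\cone(p\vert_S)\times\cone(p\vert_T)$, while otherwise the $\GP^+$ coproduct vanishes, matching the zero coproduct on $\P$. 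The one ingredient you assume without verification is that poset cones are themselves extended generalized permutahedra, i.e.\ that their normal fans are coarsenings of subfans of the braid fan; the paper also treats this as background (it asserts that poset cones are exactly tangent cones of generalized permutahedra), but strictly speaking it is part of the claim ``embeds as a Hopf submonoid of \emph{extended generalized permutahedra},'' so a fully self-contained account would add the short check that the functionals bounded above on $\cone(p)$ --- namely those $y$ with $y_i\le y_j$ whenever $i\geq_p j$ --- form a union of closed braid cones.
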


\subsection{Submonoids of $\nGP$}\label{subsec:submonoidsofgp}
We now introduce the more the submonoids of $\nGP$ relating to permutahedra, associahedra, orbit polytopes.
By restricting to subfamilies of generalized permutahedra, Aguiar and Ardila \cite{AguiarArdila} and the second author \cite{Mariel} express the coproduct more specifically, which will be helpful when we apply duality later.
All these families are also linearizations of Hopf monoids in set species, but we only describe the Hopf monoid structure in vector species.

\subsubsection{The Hopf Monoid of Permutahedra}
We first consider the submonoid of $\nGP$ that results from restricting to standard permutahedra.
Specifically, define the \Def{Hopf monoid of generalized permutahedra up to normal equivalence} $\nPI$ by the following.
For $I$ a finite set, let $\nPI[I]$ is the vector space with a basis indexed by normal equivalence classes of products of standard permutahedra $\pi_{S_1}\times\dots\times\pi_{S_k}\subseteq \mathbb{R}^I$ where $S_1\sqcup\dots\sqcup S_k = I$.
The product and coproduct are induced from $\nGP$.
In particular, the coproduct on standard permutahedra $\pi_I$ satisfies
\begin{equation}\label{eq:coprodperm}
    \Delta_{S,T}(\pi_I) = \pi_S \otimes \pi_T
\end{equation} for each decomposition $I = S \sqcup T$ \cite{AguiarArdila}.


\subsubsection{The Hopf Monoid of Associahedra}
We can similarly consider the submonoid of $\nGP$ which results from restricting to associahedra. 
Define the \Def{Hopf monoid of associahedra up to normal equivalence} $\nA$ by the following.
For a given ground set $I$, define 
\[
\nA[I] = \mathrm{span}\{a_{\ell_1} \times \cdots \times a_{\ell_k} \mid \ell_i \text{ is a linear order on } S_i \text{ for } I = S_1 \sqcup \cdots \sqcup S_k\}.
\]
This again forms a submonoid of $\nGP$, where the coproduct can be expressed as  
\begin{equation}\label{eq:coproductas}
    \Delta_{S,T}(a_\ell) = a_{\ell\vert S} \otimes \left( a_{\ell \vert T_1} \times \cdots \times a_{\ell \vert T_k} \right).
\end{equation}
for each linear order $\ell$ on $I$ and any decomposition $I = S \sqcup T$ where $T = T_1 \sqcup \cdots \sqcup T_k$ is a decomposition of $T$ into maximal intervals of $\ell$ \cite{AguiarArdila}. Here $\ell \vert U$ denotes the restriction of the linear order $\ell$ to $U$. 

\begin{example}[Coproduct in $\nA$]
Consider $a_{1234} \in \overline{\mathbf{A}}[\{1,2,3,4 \}]$. Then,
    \begin{align*}
        \Delta_{\{2\}, \{1,3,4\}}(a_{1234}) &= a_{\ell\vert S} \otimes \left( a_{\ell \vert T_1} \times \cdots \times a_{\ell \vert T_k} \right) \\
        &= a_{2} \otimes \left( a_1 \times a_{34}\right) .
    \end{align*}
\end{example}

\subsubsection{The Hopf Monoid of Orbit Polytopes}\label{sec:orbitpolytopes}
We now consider the submonoid of $\nGP$ which results from restricting to be orbit polytopes. 

In order to define our vector species, we must first describe normal equivalence classes of orbit polytopes, given by the second author \cite{Mariel}.
Every orbit polytopes has an associated composition of a natural number, which we explain next.
A \Def{composition} of $n$ is a way of writing $n$ as an ordered sum of positive integers. 
For example, the distinct compositions of $n=3$ are $1+1+1$, $1+2$, $2+1$, and $3$ which we denote as $(1,1,1)$, $(1,2)$, $(2,1)$, and $(3)$ respectively.
In genenral, we denote compositions as $\lambda = (\lambda_1,\ldots,\lambda_k)$, meaning $\lambda_i > 0$ for each $i$ and $\sum_{i=1}^k \lambda_i = n$. We denote by $|\lambda|$ the sum of all the parts of $\lambda$.
Let $x\in\mathbb{R}^I$ and write the coordinates in decreasing order.
Construct a composition $\lambda$ where $\lambda_i$ be the number of times the $i$th largest coordinate appears in $x$.
We call such a $\lambda$ the \Def{composition of the point} $x$.
There is in fact a bijection between normal equivalence classes of orbit polytopes in $\mathbb{R}^I$, and compositions of $n = |I|$; two orbit polytopes are normally equivalent if and only if their vertices have the same composition \cite{Mariel}.
Given a composition $\lambda$ with $k$ parts, the associated \Def{orbit polytope normal equivalence class of} $\lambda$, denoted $\Or_\lambda$, is the normal equivalence class of $\Or(x)$ where $x$ is a point defined as
\begin{equation} \label{pcomp}
    x = (\underbrace{k,\ldots,k}_{\lambda_1 \text{ entries}}, \ldots,\underbrace{2,\ldots,2}_{\lambda_{k-1} \text{ entries}}, \underbrace{1,\ldots,1}_{\lambda_k \text{ entries}}).
\end{equation}

Define the \Def{Hopf monoid of orbit polytopes up to normal equivalence} $\nOP$ by the following.
Orbit polytopes are generalized permutahedra, and their normal equivalence classes $\Or_\lambda$ form a submonoid $\nOP$ of $\nGP$ \cite{Mariel}.
We define the vector space
\[
\nOP[I] = \mathrm{span}\{ \Or_{\alpha_1, S_1} \times \cdots \times \Or_{\alpha_k, S_k} : I = S_1 \sqcup \cdots \sqcup S_k \text{ and } \alpha_i \text{ composition of } |S_i| \text{ for all } i\}.
\]
where $\Or_{\alpha_i, S_i}$ is the orbit polytope normal equivalence class of $\alpha_i$ in the entries of the coordinates indexed by the set $S_i$.

To describe the coproduct of $\nOP$, we use two operations on compositions..
Given two compositions $\alpha = (\alpha_1, \ldots, \alpha_k)$ and $\beta = (\beta_1, \ldots, \beta_\ell)$ define 
\begin{itemize}
    \item the \Def{concatenation} $\alpha \cdot \beta$ as $(\alpha_1, \ldots, \alpha_k, \beta_1, \ldots, \beta_\ell)$;
    \item the \Def{near-concatenation} $\alpha \odot \beta$ as $(\alpha_1, \ldots, \alpha_{k-1} ,\alpha_k + \beta_1, \beta_2, \ldots, \beta_\ell)$.
\end{itemize}

\begin{theorem}[\cite{Mariel}]
Given a decomposition $I = S \sqcup T$ and a composition $\alpha$ of $n=|I|$, we can describe the coproduct of $\nOP$ as
\begin{equation}\label{eq:coproductorbit}
    \Delta_{S,T}(\Or_\alpha) = \Or_{\alpha \vert_S} \otimes \Or_{\alpha /_S}
\end{equation}
where $\alpha\vert_S$ and $\alpha/_S$ are the unique pair of compositions satisfying 
\begin{enumerate}
    \item $\alpha\vert_S$ is a composition of $|S|$ and $\alpha/_S$ is a composition of $|T|$, and
    \item either $\alpha\vert_S \cdot \alpha/_S = \alpha$ or $\alpha\vert_S \odot \alpha/_S = \alpha$.
\end{enumerate}
\end{theorem}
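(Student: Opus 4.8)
The plan is to compute the geometric coproduct from \Cref{thrm:GP coproduct} directly on a representative orbit polytope and to read off the resulting compositions. Fix the composition $\alpha = (\alpha_1, \dots, \alpha_k)$ of $n = |I|$ and let $p \in \R I$ be the representative point from \eqref{pcomp}, so that the $i$th largest value $k - i + 1$ occurs exactly $\alpha_i$ times. Since $\nOP$ is a submonoid of $\nGP$, it suffices to identify the normal equivalence classes of the restriction $\Or(p)|_S$ and the contraction $\Or(p)/_S$, and by the stated bijection between normal equivalence classes of orbit polytopes and compositions, these classes are pinned down by their compositions alone.

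First I would describe the $\1_S$-maximal face of $\Or(p)$. The vertices of $\Or(p)$ are the permutations $\sigma(p)$ for $\sigma \in S_I$, and $\langle \1_S, \sigma(p)\rangle = \sum_{i \in S}(\sigma(p))_i$ is maximized exactly when the $|S|$ largest coordinate values of $p$ are placed in the positions indexed by $S$. Hence the vertices of the maximal face are precisely those $\sigma(p)$ for which $S$ receives the top $|S|$ values (as a multiset) and $T$ receives the rest. Because an arrangement of the top values on $S$ and an arrangement of the bottom values on $T$ may be chosen independently, the vertex set of this face is the product of the vertex set of the orbit polytope on $S$ using the top values with that of the orbit polytope on $T$ using the bottom values. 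Taking convex hulls realizes the face as such a product, which by \Cref{thrm:GP coproduct} must be the unique decomposition $\Or(p)|_S \times \Or(p)/_S$.

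The core of the argument is then a case split on where $|S|$ falls among the partial sums $c_j = \alpha_1 + \dots + \alpha_j$. If $|S| = c_j$ for some $j$, then $S$ receives precisely all copies of the $j$ largest values, so $\Or(p)|_S$ has composition $(\alpha_1, \dots, \alpha_j)$ while $\Or(p)/_S$ has composition $(\alpha_{j+1}, \dots, \alpha_k)$, and these concatenate to $\alpha$. If instead $c_{j-1} < |S| < c_j$, put $r = |S| - c_{j-1}$, so $0 < r < \alpha_j$; now $S$ receives all copies of the top $j-1$ values together with $r$ of the $\alpha_j$ tied copies of the $j$th largest value, giving $\Or(p)|_S$ the composition $(\alpha_1, \dots, \alpha_{j-1}, r)$, while the remaining $\alpha_j - r$ tied copies and all smaller values go to $T$, giving $\Or(p)/_S$ the composition $(\alpha_j - r, \alpha_{j+1}, \dots, \alpha_k)$. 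These two compositions near-concatenate to $\alpha$. In either case the resulting pair satisfies conditions (1) and (2), which is the asserted formula.

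Finally, uniqueness of the pair $(\alpha|_S, \alpha/_S)$ follows on one hand from the geometric uniqueness in \Cref{thrm:GP coproduct}, and on the other hand can be verified combinatorially: for a fixed value of $|S|$ at most one of the two conditions in (2) can yield compositions of the prescribed sizes $|S|$ and $|T|$, since the concatenation case corresponds exactly to $|S|$ being a partial sum $c_j$ and the near-concatenation case to $|S|$ lying strictly between consecutive partial sums. I expect the main obstacle to be the tie case $c_{j-1} < |S| < c_j$: there one must check with care that the maximal face genuinely factors as an independent product of two orbit polytopes, and that each factor is itself an orbit polytope with the stated composition rather than some larger generalized permutahedron arising from the tied coordinates.
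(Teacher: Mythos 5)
Your proposal is correct, but note that the paper itself offers no proof of this statement: it is imported wholesale from \cite{Mariel}, so there is no internal argument to compare against. Your argument is the natural self-contained one, and it holds up. The two pillars are sound: (a) the $\1_S$-maximal face of $\Or(p)$ is the convex hull of exactly those permutations of $p$ that place the top $|S|$ values (as a multiset) on the positions in $S$, and since the arrangements on $S$ and on $T$ can be chosen independently, this vertex set is a product of two vertex sets, whence the face is the product of the two convex hulls, each of which is by definition an orbit polytope in $\R S$ and $\R T$ respectively; (b) the case split on whether $|S|$ equals a partial sum $c_j = \alpha_1 + \cdots + \alpha_j$ or lies strictly between $c_{j-1}$ and $c_j$ produces exactly the concatenation and near-concatenation factorizations of $\alpha$, and these two cases are mutually exclusive for fixed $|S|$ (a near-concatenation factor $(\alpha_1,\dots,\alpha_{j-1},r)$ with $0 < r < \alpha_j$ has total strictly between consecutive partial sums), which gives the uniqueness claim combinatorially, consistent with the geometric uniqueness in \Cref{thrm:GP coproduct}. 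The worry you flag about the tie case is handled correctly by your product-of-vertex-sets observation: distinct copies of a tied value are identical coordinates, so the maximizing vertices are precisely all pairs (permutation of the top multiset on $S$, permutation of the bottom multiset on $T$), and each factor is genuinely an orbit polytope, not anything larger. The only implicit ingredient worth acknowledging is that the coproduct on $\nGP$ descends to normal equivalence classes (so computing on the representative \eqref{pcomp} suffices), together with the cited bijection between classes of orbit polytopes and compositions; both are assumed structure in the paper, so invoking them is legitimate.
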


\section{Lie monoids of Primitives}\label{sec:lieprimitives}

We now apply the theory in Section~\ref{subsec:hopfmonoids} to the Hopf monoids in Section~\ref{sec:hopfmonoidexamples}.
In particular, when we dualize $\nGP$ and its Hopf submonoids, we obtain cocommutative Hopf monoids of particular interest.
Theorem~\ref{thrm:CMM} tells us that these are determined by their primitive elements.
This theorem motivates us to describe these primitive elements and their Lie monoid structure which we do here.

\begin{theorem}\label{thm:posetprimitives}
    The primitives of the dual Hopf monoid of posets, $\mathbf{P}^*$, are the duals of connected posets. 
\end{theorem}
\begin{proof}
    By Theorem~\ref{thrm:dualpi}, the primitives of $\mathbf{P}^*$ are the duals of indecomposables in $\mathbf{P}$.
    The product of two posets is their disjoint union, so the indecomposable elements of $\mathbf{P}$ are the connected posets. 
\end{proof}

In fact, the Hopf monoids $\nGP$, $\nPI$, $\nA$, and $\nOP$ all have a notion of ``connected'' elements that cannot be expressed as nontrivial products of other elements.
We use this to determine the primitives of $\nGP$, and the description of the primitives of its submonoids follows as a corollary.

\begin{theorem}\label{thm:GPprimitives}
    The primitives of $\nGP^*$ are the duals to the $(|I|-1)$-dimensional generalized permutahedra in $\R^I$. 
\end{theorem}
\begin{proof}
    Like with the proof of Theorem~\ref{thm:posetprimitives}, we find the primitives in $\nGP^*$ by taking the duals of the indecomposable elements of $\nGP$.
    These are generalized permutahedra in $\R^I$ that cannot be expressed as products of two or more generalized permutahedra, which are exactly those of codimension $1$.
\end{proof}

\begin{corollary}\label{cor:subGPprimitives}
    The primitives of $\nPI^*$, $\nA^*$, and $\nOP^*$ are as follows:
    \begin{enumerate}[label=(\roman*)]
        \item For $\nPI^*[I]$, these are $\pi_I^*$. 
        \item For $\nA^*[I]$, these are $a_\ell^*$ for any linear order $\ell$ on $I$. 
        \item For $\nOP^*[I]$, these are $\Or_{\alpha, I}^*$ where $\alpha$ is a composition of $|I|$.
    \end{enumerate}
\end{corollary}

Recall Theorem~\ref{thm:primitivesareliesub} shows that the primitives are Lie monoids with the the Lie bracket given by the commutator. 
We will next describe this Lie bracket for the the primitives of $\nPI^*$, $\nA^*$, and $\nOP^*$.
In order to give the Lie bracket we first compute the dual product by applying Proposition~\ref{prop:dualprodcoprod}. 

\subsection{Primitives of Dual Permutahedra}
We first consider computing the product and Lie bracket for $\mathcal{P}(\nPI^*)$.
\begin{theorem}\label{thm:permprims}
    Let $I = S \sqcup T$, then the product of $\pi^*_S \in \pp{\nPI^*}[S]$ and $\pi^*_T \in \pp{\nPI^*}[T]$ in $\pp{\nPI^*}$ is $\mu_{S,T}^*(\pi_S^* \otimes \pi_T^*) = \pi_{S \sqcup T}^* + (\pi_S \times \pi_T)^*$.
\end{theorem}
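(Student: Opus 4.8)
The plan is to apply the dual product formula \eqref{eq: dual product and coproduct}, which gives
$$\mu_{S,T}^*(\pi_S^* \otimes \pi_T^*) = \sum_{\Delta_{S,T}(z) = \pi_S \otimes \pi_T} z^*,$$
where the sum runs over basis elements $z$ of $\nPI[I]$. The task therefore reduces to identifying exactly which basis elements $z$ — that is, which products of standard permutahedra $\pi_{S_1} \times \cdots \times \pi_{S_k}$ with $I = S_1 \sqcup \cdots \sqcup S_k$ — satisfy $\Delta_{S,T}(z) = \pi_S \otimes \pi_T$.

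First I would compute the coproduct of an arbitrary such product. Since the coproduct is taken at the $\1_S$-maximal face and $\1_S = \sum_j \1_{S \cap S_j}$ acts factorwise on the Cartesian product $\pi_{S_1}\times\cdots\times\pi_{S_k}\subseteq\R{S_1}\times\cdots\times\R{S_k}$, the $\1_S$-maximal face of $z$ is the product of the $\1_{S\cap S_j}$-maximal faces of the factors. Applying the single-permutahedron coproduct \eqref{eq:coprodperm} to each $\pi_{S_j}$ with respect to $S_j = (S \cap S_j) \sqcup (T \cap S_j)$ and regrouping the $S$- and $T$-labeled factors yields
$$\Delta_{S,T}(z) = \left(\prod_{j} \pi_{S \cap S_j}\right) \otimes \left(\prod_{j} \pi_{T \cap S_j}\right).$$

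Next I would determine when both tensor factors collapse to single permutahedra. A product of permutahedra equals $\pi_S$ exactly when it has a single nonempty factor equal to $\pi_S$, so $\Delta_{S,T}(z) = \pi_S\otimes\pi_T$ forces a unique index $j_0$ with $S \subseteq S_{j_0}$ and $S \cap S_j = \emptyset$ for $j \ne j_0$, and likewise a unique $j_1$ with $T \subseteq S_{j_1}$ and $T \cap S_j = \emptyset$ for $j \ne j_1$. A short case analysis on whether $j_0 = j_1$ then pins down the two possibilities: if $j_0 = j_1$ that block contains $S \sqcup T = I$, forcing $k = 1$ and $z = \pi_I$; if $j_0 \ne j_1$ then $S_{j_0} = S$ and $S_{j_1} = T$ already exhaust $I$, forcing $k = 2$ and $z = \pi_S \times \pi_T$. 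Summing the two corresponding dual basis elements gives $\mu_{S,T}^*(\pi_S^* \otimes \pi_T^*) = \pi_{S\sqcup T}^* + (\pi_S \times \pi_T)^*$.

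The main obstacle I anticipate is the bookkeeping in the middle step: justifying cleanly that the coproduct distributes factorwise over the Cartesian product and that a product of permutahedra equals the single permutahedron $\pi_S$ precisely when it has one nonempty factor. Both facts are intuitive but require care with the empty-factor (unit) conventions and with the fact that all equalities here are equalities of normal-equivalence classes of basis elements in $\nPI$, not of literal polytopes.
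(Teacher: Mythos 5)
Your proposal is correct and takes essentially the same approach as the paper: apply the dual product formula \eqref{eq: dual product and coproduct} and determine which basis elements $z$ of $\nPI[I]$ satisfy $\Delta_{S,T}(z) = \pi_S \otimes \pi_T$. The only difference is that you explicitly verify that the only such $z$ are $\pi_{S\sqcup T}$ and $\pi_S\times\pi_T$ (via the factorwise coproduct computation and the case analysis on blocks), a claim the paper's proof simply asserts on geometric grounds.
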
   
\begin{proof}
    By Proposition~\ref{prop:dualprodcoprod}, we can define the dual product as 
    \begin{equation}\label{eq:dualprimperm}
         \mu_{S,T}^*(\pi_S^* \otimes \pi_T^*) = \sum_{\Delta_{S,T}(z) = \pi_S \otimes \pi_T} z^* 
     \end{equation}
     in $\nPI^*$.
     Geometrically speaking, this is a formal sum over the duals of all permutahedra and products of permutahedra whose $\1_S$-maximal face is the product $\pi_S \times \pi_T$. 
     The only possibilities are $\pi_{S \sqcup T}$ and $\pi_S \times \pi_T$, so $\mu^*_{S,T}(\pi_S^* \otimes \pi_T^*) = (\pi_S \times \pi_T)^* + \pi_{S \sqcup T}^*$.
\end{proof}

This leads us to the Lie bracket for $\mathcal{P}(\nPI^*)$.
In this case, the Lie bracket is identically zero because the product in $\mathcal{P}(\nPI^*)$ is commutative.
 
\begin{theorem}\label{thm:liebracketpermutahedra}
    The Lie monoid of primitives of $\nPI^*$ has Lie bracket given by the commutator $\gamma_{S,T}: \pp{\nPI^*}[S] \otimes \pp{\nPI^*}[T] \to \pp{\nPI^*}[I]$ where $\gamma_{S,T}(\pi_S^* \otimes \pi_T^*) = 0$.
\end{theorem}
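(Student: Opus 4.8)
The plan is to compute the commutator directly from the product formula just established in \Cref{thm:permprims} and show that its antisymmetric part vanishes identically. First I would recall that by \Cref{thrm:HopfAreLie}, the Lie bracket on the Lie monoid of primitives is the commutator
\[
\gamma_{S,T}(\pi_S^* \otimes \pi_T^*) = \mu^*_{S,T}(\pi_S^* \otimes \pi_T^*) - \mu^*_{T,S}(\pi_T^* \otimes \pi_S^*),
\]
so the entire claim reduces to comparing the two dual products obtained by swapping the roles of $S$ and $T$.

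The key step is to evaluate each term using \Cref{thm:permprims}. We have $\mu^*_{S,T}(\pi_S^* \otimes \pi_T^*) = \pi_{S \sqcup T}^* + (\pi_S \times \pi_T)^*$, and applying the same theorem with the roles of $S$ and $T$ exchanged gives $\mu^*_{T,S}(\pi_T^* \otimes \pi_S^*) = \pi_{T \sqcup S}^* + (\pi_T \times \pi_S)^*$. I would then observe that $S \sqcup T = T \sqcup S = I$ as sets, so the first terms agree: $\pi_{S \sqcup T}^* = \pi_{T \sqcup S}^*$. For the second terms, the product in $\nPI$ is commutative (it is the Cartesian product of polytopes, which is commutative in the sense of Hopf monoids as noted in \Cref{sec:hopfGP}), so $\pi_S \times \pi_T$ and $\pi_T \times \pi_S$ are the same basis element of $\nPI[I]$, whence their duals coincide. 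Therefore both summands cancel and $\gamma_{S,T}(\pi_S^* \otimes \pi_T^*) = 0$.

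The only point requiring care, and the mild obstacle, is making precise that the swap $\mu^*_{T,S}(\pi_T^* \otimes \pi_S^*)$ is genuinely the relevant second term of the commutator after passing through the braiding $\beta$; one must check that the braiding identifies $\pi_T^* \otimes \pi_S^*$ with the appropriately reindexed tensor so that \Cref{thm:permprims} applies verbatim. Since $\beta$ merely swaps the two tensor factors and the product is symmetric under this swap, this is immediate, but I would state it explicitly to justify that no sign or relabeling discrepancy arises. I expect the proof to be short: essentially a two-line cancellation once the commutativity of the underlying product is invoked, with the substance lying entirely in \Cref{thm:permprims} rather than in any new computation.
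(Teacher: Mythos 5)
Your proposal is correct and follows exactly the paper's argument: both evaluate the two terms of the commutator via \Cref{thm:permprims} and cancel them, using that $\pi_{S\sqcup T}^* = \pi_{T\sqcup S}^*$ and that $\pi_S\times\pi_T$ and $\pi_T\times\pi_S$ are the same basis element by commutativity of the product. Your extra remarks on the braiding just make explicit what the paper leaves implicit.
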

\begin{proof}
     From Theorem~\ref{thm:permprims}, it follows that the commutator is
     \begin{align*}
         \gamma_{S,T}(\pi_S^* \otimes \pi_T^*) &= \mu^*_{S,T}(\pi_S^* \otimes \pi_T^*) - \mu^*_{T,S}(\pi_T^* \otimes \pi_S^*) \\
         &= (\pi_S \times \pi_T)^* + \pi_{S \sqcup T}^* - (\pi_T \times \pi_S)^* - \pi_{T \sqcup S}^* \\
         &= 0.
     \end{align*}
 \end{proof}

\subsection{Primitives of Dual Associahedra and the Witt Lie Algebra}

Next we find the commutator for $\pp{\nA^*}$.
In order to understand the product in $\pp{\nA^*}$, we will need a notion of block insertions of linear orders.
For disjoint sets $S$ and $T$, let $\ell$ be a linear order $s_1<_\ell s_2<_\ell\cdots<_\ell s_{|S|}$ on a finite set $S$ and $m$ be a linear order $t_1<_m t_2 <_m\cdots <_m t_{|T|}$ on a finite set $T$.
We say $b$ is a \Def{block insertion} of $m$ into $\ell$ if there exists some index $i$ with $0\le i\le |S|$ such that $b$ is the linear order $s_1<_b \cdots <_b s_i <_b t_1<_b\cdots <_b t_{|T|}<_b s_{i+1}<_b\cdots<_b s_{|S|}$.
In other words, $m$ is inserted into $\ell$ at index $i$ (where $i=0$ means $t_1$ is the least element in the order and $t_{|T|}<s_1$).
Note that there are $|S|+1$ ways to block insert $m$ into $\ell$.

\begin{figure}[H]
     \centering
     \begin{tikzpicture}[nod/.style={draw=black,fill=black,circle,inner sep=1.5pt}, cover/.style={ thick, norange}, scale=0.45]
    \node[above,bblue] at (3,0) {$1$}; 
    \node[above,bblue] at (4,0) {$4$};
    \node[above,bblue] at (5,0) {$2$};
    \node at (11.1,0) {linear order $m$ on $\{1,4,2\}$};
    
    \node[above,npink] at (3.5,-2) {$3$}; 
    \node[above,npink] at (4.5,-2) {$5$};
    \node at (10.5,-2) {linear order $\ell$ on $\{3,5\}$};
    
    \draw[bblue] (3,0) node[nod,bblue] {} -- (4,0) node[nod,bblue] {} -- (5,0) node[nod,bblue] {};
    
    \draw[npink] (3.5,-2) node[nod,npink] {} -- (4.5,-2) node[nod,npink] {};
    
    \node at (9.5,-4) {All block insertions of $m$ into $\ell$:};
    
    \draw[bblue] (1.5,-6) node[nod,bblue]{} -- (2.5,-6) node[nod,bblue]{} -- (3.5,-6) ;
    \draw (3.5,-6)node[nod,bblue]{}-- (4.5,-6)node[nod,npink]{};
    \draw[npink] (4.5,-6) node[nod,npink]{} -- (5.5,-6) node[nod,npink]{};
    
    \node[above,bblue] at (1.5,-6) {$1$};
    \node[above,bblue] at (2.5,-6) {$4$};
    \node[above,bblue] at (3.5,-6) {$2$};
    \node[above,npink] at (4.5,-6) {$3$};
    \node[above,npink] at (5.5,-6) {$5$};
    
    \draw (7.5,-6) node[nod,npink]{} -- (8.5,-6) node[nod,bblue]{};
    \draw[bblue] (8.5,-6) -- (9.5,-6) node[nod,bblue]{} -- (10.5,-6);
    \draw (10.5,-6) node[nod,bblue]{} -- (11.5,-6) node[nod,npink]{};
    
    \node[above,npink] at (7.5,-6) {$3$};
    \node[above,bblue] at (8.5,-6) {$1$};
    \node[above,bblue] at (9.5,-6) {$4$};
    \node[above,bblue] at (10.5,-6) {$2$};
    \node[above,npink] at (11.5,-6) {$5$};

    \draw[npink] (13.5,-6) node[nod,npink]{} -- (14.5,-6);
    \draw (14.5,-6)node[nod,npink]{} -- (15.5,-6);
    \draw[bblue] (15.5,-6) node[nod,bblue]{} -- (16.5,-6) node[nod,bblue]{} -- (17.5,-6) node[nod,bblue]{};
    
    \node[above,npink] at (13.5,-6) {$3$};
    \node[above,npink] at (14.5,-6) {$5$};
    \node[above,bblue] at (15.5,-6) {$1$};
    \node[above,bblue] at (16.5,-6) {$4$};
    \node[above,bblue] at (17.5,-6) {$2$};
    
\end{tikzpicture}
     \caption{An example of block insertions}
     \label{fig:blockinsertion}
 \end{figure}

\begin{theorem}\label{thrm:DualProdAssociahedra}
    Let $I = S \sqcup T$; then the product of two dual associahedra $a^*_\ell \in \pp{\nA^*}[S]$ and $a^*_m \in \pp{\nA^*}[T]$ in $\pp{\nA^*}$ is 
    \[
    \mu^*_{S,T}(a^*_\ell \otimes a^*_m) = \sum_{\substack{b \text{ a block insertion } \\ \text{of } m \text{ into } \ell}} a^*_b + (a_\ell \times a_m)^*.
    \]
    
\end{theorem}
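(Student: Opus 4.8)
The plan is to compute the dual product directly from its defining formula \eqref{eq: dual product and coproduct}, namely
\[
\mu^*_{S,T}(a^*_\ell \otimes a^*_m) = \sum_{\Delta_{S,T}(z) = a_\ell \otimes a_m} z^*,
\]
where $z$ ranges over the basis of $\nA[I]$, i.e.\ over products of associahedra. The whole problem thus reduces to identifying exactly which such $z$ satisfy $z|_S = a_\ell$ and $z/_S = a_m$, with both factors being single associahedra. I expect exactly the $|S|+1$ block insertions, together with the single product $a_\ell \times a_m$, to appear, and the proof is a matter of showing that nothing else does.

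First I would dispose of the case where $z = a_b$ is a single associahedron on $I$. Here \eqref{eq:coproductas} gives $\Delta_{S,T}(a_b) = a_{b\vert S} \otimes (a_{b\vert T_1} \times \cdots \times a_{b\vert T_k})$, where $T = T_1 \sqcup \cdots \sqcup T_k$ is the decomposition of $T$ into maximal intervals of $b$. Matching the left tensor factor with $a_\ell$ forces $b\vert S = \ell$, and matching the right factor with the single associahedron $a_m$ forces $k=1$ (so that $T$ is a single maximal interval of $b$) together with $b\vert T = m$. These three conditions --- $b\vert S = \ell$, $b\vert T = m$, and $T$ consecutive in $b$ --- say precisely that $b$ is obtained by inserting the ordered block $m$ into $\ell$ at one of the $|S|+1$ available positions, i.e.\ that $b$ is a block insertion of $m$ into $\ell$. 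This produces the first sum in the claimed formula.

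Then I would treat the general case $z = a_{c_1} \times \cdots \times a_{c_r}$ on a decomposition $I = U_1 \sqcup \cdots \sqcup U_r$. The key structural input is that the coproduct of \Cref{thrm:GP coproduct} is computed factorwise on a product: the $\1_S$-maximal face of a Cartesian product is the product of the $\1_{S \cap U_i}$-maximal faces, so $z|_S = \prod_i a_{c_i}\vert_{S \cap U_i}$ and $z/_S = \prod_i a_{c_i}/_{S \cap U_i}$. I would then run a dimension count. Since $a_\ell$ has dimension $|S|-1$ while $\dim(z|_S) = |S| - \#\{i : S \cap U_i \neq \emptyset\}$, exactly one block meets $S$; likewise $a_m$ has dimension $|T|-1$ while $\dim(z/_S) = |T| - \sum_i n_i$, where $n_i$ is the number of maximal intervals into which $T \cap U_i$ is cut by $c_i$, forcing exactly one block to meet $T$ and to do so in a single interval. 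Combining the two constraints, either the same block absorbs all of $S$ and $T$ (returning $r=1$ and the block insertions already found) or two distinct blocks satisfy $U_{i_0} = S$ and $U_{i_1} = T$, which gives $z = a_\ell \times a_m$ and the extra term $(a_\ell \times a_m)^*$. The main obstacle is precisely this last step: carefully applying the factorwise coproduct of products of associahedra and squeezing the two dimension constraints together to rule out every $z$ except the block insertions and $a_\ell \times a_m$.
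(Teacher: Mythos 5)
Your proof is correct, and its first half---the case where $z = a_b$ is a single associahedron---is essentially identical to the paper's: both apply \eqref{eq:coproductas}, match tensor factors, and use the fact that an associahedron is never a nontrivial product to force $b\vert S = \ell$ and a single maximal interval for $T$, i.e.\ a block insertion. Where you genuinely diverge is in handling decomposable $z$. The paper only examines products $a_o \times a_p$ with $o$ a linear order on $S$ and $p$ a linear order on $T$, computes $\Delta_{S,T}(a_o \times a_p) = a_o \otimes a_p$ from the compatibility axiom, and then dismisses everything else with the one-line remark that $a_\ell$ and $a_m$ are not themselves products of associahedra. You instead take an arbitrary $z = a_{c_1} \times \cdots \times a_{c_r}$ over an arbitrary decomposition $I = U_1 \sqcup \cdots \sqcup U_r$, compute the coproduct factorwise, and run the dimension count $\dim(z|_S) = |S| - \#\{i : S \cap U_i \neq \emptyset\}$ and $\dim(z/_S) = |T| - \sum_i n_i$ to force exactly one block to meet $S$ and exactly one to meet $T$, whence $r \le 2$, with $U_{i_0} = S$ and $U_{i_1} = T$ when $r = 2$. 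This costs more bookkeeping but buys completeness: it explicitly excludes products of two associahedra over decompositions \emph{other} than $S \sqcup T$ (a block straddling both $S$ and $T$), a configuration the paper never addresses head-on and which is covered there only implicitly by the indecomposability remark combined with an unstated factorwise coproduct formula. In short, the paper's route is shorter and stays inside the Hopf-monoid axioms, while yours is self-contained at the level of polytopes and closes that small gap in the product case.
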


\begin{proof}
     Let $I = S \sqcup T$ be some decomposition of a finite set $I$.
     By Proposition~\ref{prop:dualprodcoprod}, the product in the dual Hopf monoid $\nA^*$ is
     \begin{equation}\label{eq:dualprimas}
         \mu_{S,T}^*(a_\ell^* \otimes a_m^*) = \sum_{\Delta_{S,T}(z) = a_\ell \otimes a_m} z^* 
     \end{equation}
     where $\ell$ is a linear order on $S$ and $m$ is a linear order on $T$.
     
     We will first consider when $z$ is not a product of associahedra, meaning $z = a_b$ for some linear order $b$ on $I$.
     We have that $\Delta_{S,T}(a_b) = a_{b \vert S} \otimes \left(a_{b \vert T_1} \times \cdots \times a_{b \vert T_k} \right)$ where $T = T_1 \sqcup \cdots \sqcup T_k$ is the decomposition of $T$ into maximal intervals of $b$ from \eqref{eq:coproductas}.
     Suppose that 
     \[
     a_{b \vert S} \otimes \left(a_{b \vert T_1} \times \cdots \times a_{b \vert T_k} \right) = a_\ell \otimes a_m.
     \]
     This means that $a_{b\vert S} = a_\ell$ and $a_{b \vert T_1} \times \cdots \times a_{b \vert T_k} = a_m$, implying that the product in the latter equation consists of only one term, since an associahedron can never be equal to the product of two or more associahedra.
     So $b \vert T$ should have a single maximal interval, which means that $b$ must be a block insertion of $m$ into $\ell$.
    
     We will now show the only product of associahedra whose coproduct in $\nA$ results in  $a_\ell \otimes a_m$ must be $a_\ell \times a_m$. Suppose we have a product of two arbitrary associahedra $a_o \times a_p$ with $o$ a linear order on $S$ and $p$ a linear order on $T$.
     Then the compatibility property of the product and the coproduct of $\nA$ results in the following:
     \begin{align*}
         \Delta_{S,T}(a_o \times a_p) &= \Delta_{S,T}(\mu_{S,T}(a_o, a_p)) \\
         &= (\mu_{S,\emptyset} \otimes \mu_{\emptyset,T})(\id_S \otimes \beta_{\emptyset,\emptyset} \otimes \id_T)(\Delta_{S,\emptyset}(a_o) \otimes \Delta_{\emptyset,T}(a_p)) \\
         &= (\mu_{S,\emptyset} \otimes \mu_{\emptyset,T})(a_o \otimes 1 \otimes 1 \otimes a_p)\\
         &= a_o \otimes a_p.
     \end{align*}
     So $ \Delta_{S,T}(a_o \times a_p) = a_\ell \otimes a_m$ can only happen when $a_o = a_\ell$ and $a_p = a_m$.
     
     We do not get any terms where $z$ is a product of more than two associahedra because $a_\ell$ and $a_m$ are not products of associahedra.
\end{proof}


\begin{theorem}\label{thm:liebracketassociahedra}
    Let $\ell$ and $m$ be linear orders on $S$ and $T$ respectively.
    The Lie bracket on $\mathcal{P}(\nA^*)$ is given by the commutator $\gamma_{S,T}: \mathcal{P}(\nA^*)[S] \otimes \mathcal{P}(\nA^*)[T] \to \mathcal{P}(\nA^*)[I]$ which has the form
    \[
    \gamma_{S,T}(a_\ell^* \otimes a_m^*) = \sum_{\substack{b \text{ a block insertion } \\ \text{of } m \text{ into } \ell}} a^*_b - \sum_{\substack{c \text{ a block insertion } \\ \text{of } \ell \text{ into } m}} a^*_c.\]
\end{theorem}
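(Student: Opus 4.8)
The plan is to obtain the bracket by a direct calculation from the product formula of \Cref{thrm:DualProdAssociahedra}, exactly mirroring the computation that was carried out for permutahedra. By \Cref{thrm:HopfAreLie}, the Lie bracket on any Hopf monoid is the commutator, so for the dual associahedra I would start from
\[
\gamma_{S,T}(a_\ell^* \otimes a_m^*) = \mu_{S,T}^*(a_\ell^* \otimes a_m^*) - \mu_{T,S}^*(a_m^* \otimes a_\ell^*),
\]
and the whole proof reduces to substituting the two products and simplifying.

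For the substitution I would apply \Cref{thrm:DualProdAssociahedra} to each term. The first term expands as a sum over block insertions of $m$ into $\ell$ plus the single ``product'' term $(a_\ell \times a_m)^*$, and the second term expands as a sum over block insertions of $\ell$ into $m$ plus $(a_m \times a_\ell)^*$. The key observation, which is the only step needing a comment, is that these two product terms cancel: since $\GP$ is commutative by \Cref{thrm:GPHopf} and $\nA$ inherits this structure, the Cartesian products $a_\ell \times a_m$ and $a_m \times a_\ell$ are the same basis element of $\nA[I]$, hence $(a_\ell \times a_m)^* = (a_m \times a_\ell)^*$ as dual functionals. Subtracting therefore annihilates the product terms and leaves precisely the difference of the two block-insertion sums, which is the asserted formula.

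I expect essentially no obstacle here beyond bookkeeping, since all of the combinatorial content was already packaged into \Cref{thrm:DualProdAssociahedra}; the one point deserving care is verifying that the two product terms really are equal as dual basis vectors, which follows immediately from commutativity of the product together with the fact that dualizing a basis sends equal basis elements to equal functionals. It is worth flagging the contrast with the permutahedron case: there the same cancellation of product terms forced the entire bracket to vanish, whereas here the surviving block-insertion sums are genuinely distinct (an insertion of $m$ into $\ell$ need not correspond to any insertion of $\ell$ into $m$), so the bracket is in general nonzero. This nonvanishing is exactly what sets up the subsequent identification of $\pp{\nA^*}$ with the positive part of the Witt algebra.
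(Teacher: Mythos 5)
Your proposal is correct and matches the paper's (implicit) argument: the paper states this theorem without proof, treating it as an immediate consequence of \Cref{thrm:DualProdAssociahedra}, and your computation is exactly the one it intends, mirroring both the paper's proof of the vanishing bracket for $\pp{\nPI^*}$ and its later Hopf-algebra computation $[a_s^*,a_t^*]=(s-t)a_{s+t}^*$, where the same cancellation $(a_\ell\times a_m)^*=(a_m\times a_\ell)^*$ by commutativity of the product is used. Your observation that the two block-insertion sums do not cancel in general (unlike the permutahedron case) is also accurate, with the minor caveat that the two concatenation terms ($m$ entirely before or after $\ell$) do appear in both sums and cancel.
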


Moreover, we can consider the Hopf algebra $\overline{A}^*:=\mathcal{F}(\nA^*)$ obtained by applying the Fock functor to $\nA^*$ mentioned in Section~\ref{subsec:hopfmonoids}.
Since $\nA$ is a finite dimension Hopf monoid, so is $\nA^*$, hence $\mathcal{F}(\mathcal{P}(\nA^*)) = \mathcal{P}(\mathcal{F}(\nA^*)) = \mathcal{P}(\overline{A}^*)$ as described in Section~\ref{subsec:CCM}.
The basis elements of $A^*$ have the form $a_n^*$ where $a_n:=a_{[n]}$ is the isomorphism class of all associahedra $a_I$ where $|I|=n$.

\begin{theorem}\label{thrm:DualAssociahedraLieAlg}
    The Lie algebra of primitives $\pp{\overline{A}^*}$ is generated by elements $a_1^*, a_2^*, \ldots$ with the Lie bracket $[a^*_s, a^*_t] = (s-t) a^*_{s+t}$.
\end{theorem}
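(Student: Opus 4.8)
The plan is to deduce the bracket formula directly from the commutator computation on $\pp{\nA^*}$ established just above, by transporting it through the Fock functor. First I would recall that, by \Cref{cor: SubGP Primitives}, the primitives of $\nA^*$ on a ground set $I$ are spanned by the duals $a_\ell^*$ as $\ell$ ranges over linear orders on $I$, and that the Fock functor of \Cref{subsec:hopfmonoids} forgets labels, collapsing all $n!$ of these (for $|I| = n$) into the single class $a_n^*$. Since the Fock functor is monoidal and sends Hopf monoids to Hopf algebras, it carries the Lie monoid of primitives to the Lie algebra of primitives $\pp{A^*}$ and takes the commutator to the commutator; thus it suffices to compute the image of $\gamma_{S,T}(a_\ell^* \otimes a_m^*)$.

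Next I would invoke the preceding commutator formula with $|S| = s$ and $|T| = t$, namely
\[
\gamma_{S,T}(a_\ell^* \otimes a_m^*) = \sum_{\substack{b \text{ a block insertion} \\ \text{of } m \text{ into } \ell}} a_b^* \;-\; \sum_{\substack{c \text{ a block insertion} \\ \text{of } \ell \text{ into } m}} a_c^*.
\]
Every $b$ appearing on the left is a linear order on $I$, so $|I| = s+t$ and the Fock functor sends each $a_b^*$ to $a_{s+t}^*$, and likewise each $a_c^*$. The computation therefore reduces to counting terms: as recorded immediately after the definition of block insertion, there are exactly $|S|+1 = s+1$ block insertions of $m$ into $\ell$ and, symmetrically, $|T|+1 = t+1$ block insertions of $\ell$ into $m$. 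Passing to the Fock algebra then gives
\[
[a_s^*, a_t^*] = (s+1)\,a_{s+t}^* - (t+1)\,a_{s+t}^* = (s-t)\,a_{s+t}^*,
\]
which is the asserted bracket. The generation statement follows at once, since the $a_n^*$ span $\pp{A^*}$ and the bracket of any two of them lands again in this span (indeed $[a_1^*, a_n^*] = (1-n)\,a_{n+1}^*$ shows $a_1^*$ and $a_2^*$ already suffice).

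The step I expect to require the most care is justifying that the Fock functor behaves on the dual side exactly as claimed — that it genuinely collapses the distinct $a_b^*$ to a single class $a_{s+t}^*$ with coefficient $1$, without introducing multiplicities or normalization constants (such as factorials or binomial factors) that would corrupt the counts $s+1$ and $t+1$. I would pin this down by fixing the particular Fock functor used in the paper, verifying that on each finite-dimensional graded piece it is the passage to (co)invariants under relabeling, and confirming that the induced map on primitives sends every labeled generator $a_\ell^*$ with $|I| = n$ to $a_n^*$ with unit coefficient. Once this bookkeeping is secured, the counting argument goes through verbatim, and recognizing $[a_s^*, a_t^*] = (s-t)\,a_{s+t}^*$ as the defining relation of the positive part of the Witt algebra (under $a_n^* \leftrightarrow L_n$) yields the isomorphism advertised in the introduction.
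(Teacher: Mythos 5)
Your proposal is correct and follows essentially the same route as the paper: both arguments rest on the count of $s+1$ block insertions and on the Fock functor collapsing each labeled $a_b^*$ to the class $a_{s+t}^*$ with unit coefficient. The only (cosmetic) difference is that you push the already-established commutator formula on $\pp{\nA^*}$ through the Fock functor, whereas the paper pushes the product formula of \Cref{thrm:DualProdAssociahedra} through first and then computes the commutator in $A^*$, cancelling the terms $(a_s\times a_t)^*$ and $(a_t\times a_s)^*$ at that stage rather than before.
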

 \begin{proof}
     Let $s,t \in \mathbb{N}$ and consider the dual associahedra $a^*_s, a^*_t \in A^*$. 
     By taking isomorphism classes we can see that the product from Theorem~\ref{thrm:DualProdAssociahedra} extends to the multiplication $a^*_s \cdot a^*_t = (s+1) a_{s+t}^* + (a_s \times a_t)^*$ in $A^*$.
     Likewise $a^*_t \cdot a^*_s = (t+1) a_{t+s}^* + (a_t \times a_s)^*$. So the Lie bracket is
     \begin{align*}
         [a^*_s, a^*_t] &= a^*_s \cdot a^*_t - a^*_t \cdot a^*_s \\
         &= (s+1) a_{s+t}^* + (a_s \times a_t)^* - \left((t+1) a_{t+s}^* + (a_t \times a_s)^* \right) \\
         &= s a_{s+t}^* + a_{s+t}^* + (a_s \times a_t)^* - t a_{t+s}^* - a_{t+s}^* - (a_t \times a_s)^* \\
         &= (s-t) a_{s+t}^*.
     \end{align*}
     This follows from the fact that $a_s\times a_t$ and $a_t\times a_s$ are isomorphic. 
 \end{proof}

The Lie algebra above looks similar to the Witt algebra, which is defined as follows \cite{Schottenloher2008}.
Take $\mathbb{C}[z,z^{-1}]$ to be the ring of Laurent polynomials with variable $z$. A \Def{derivation} is a linear map $D: \mathbb{C}[z,z^{-1}] \to \mathbb{C}[z,z^{-1}]$ that satisfies the rule $D(fg) = D(f) g + f D(g)$.
The \Def{Witt algebra} $W$ is the space of derivations of $\mathbb{C}[z,z^{-1}]$.
It has a basis given by $L_n = - z^{n+1} \frac{d}{dz}$ for $n \in \mathbb{Z}$ and is a Lie algebra with $[L_m,L_n] = (m-n) L_{m+n}$ for $m,n \in \mathbb{Z}$.



\begin{theorem}\label{thm:liealgebraassociahedra}
    The Lie algebra of primitives $\pp{\overline{A}^*}$ is isomorphic to the positive part of the Witt Lie algebra.
\end{theorem}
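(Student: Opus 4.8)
The plan is to write down the obvious degree-matching map and check that it intertwines the two brackets; both Lie algebras are presented by generators indexed by the positive integers with structurally identical brackets, so the content is entirely in verifying that the index ranges line up.

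First I would fix the positive part of the Witt algebra as $W_+ := \operatorname{span}\{L_n : n \geq 1\}$. Since $[L_m,L_n] = (m-n)L_{m+n}$ and $m,n \geq 1$ forces $m+n \geq 2 \geq 1$, this span is closed under the bracket and is therefore a Lie subalgebra of $W$, which justifies calling it the positive part.

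Next I invoke \Cref{thrm:DualAssociahedraLieAlg}: the Lie algebra $\pp{A^*}$ (the image of $\pp{\nA^*}$ under the Fock functor) has basis $\{a_n^* : n \geq 1\}$ with $[a_s^*, a_t^*] = (s-t)a_{s+t}^*$. I would then define $\phi : \pp{A^*} \to W_+$ on basis elements by $\phi(a_n^*) = L_n$ and extend linearly. Because $\{a_n^*\}_{n\ge 1}$ and $\{L_n\}_{n\ge 1}$ are both bases indexed by the positive integers, $\phi$ is a linear isomorphism; in particular the lowest-degree generator $a_1^*$ maps to $L_1$, so the images are exactly the $L_n$ with $n \geq 1$ and none of $L_0$ or the $L_n$ with $n<0$ occur. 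I then verify that $\phi$ respects the bracket via $\phi([a_s^*, a_t^*]) = \phi\big((s-t)a_{s+t}^*\big) = (s-t)L_{s+t} = [L_s, L_t] = [\phi(a_s^*), \phi(a_t^*)]$, where the middle equality is the Witt relation. Hence $\phi$ is an isomorphism of Lie algebras.

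The one point needing a remark rather than real work is the ground field: the Witt algebra is taken over $\mathbb{C}$ while the associahedra primitives live over $\R$, but since all structure constants $(s-t)$ are integers the comparison of presentations is field-independent, so the isomorphism holds after a harmless extension of scalars (or verbatim if one defines $W_+$ over the same field). I do not expect a genuine obstacle here, as the theorem is a direct translation of \Cref{thrm:DualAssociahedraLieAlg} through the definition of the Witt algebra; the only care required is the bookkeeping of index ranges confirming that we land in the strictly positive part $W_+$ rather than the full algebra.
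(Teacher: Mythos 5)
Your proof is correct and follows essentially the same route as the paper, which states this theorem as an immediate consequence of \Cref{thrm:DualAssociahedraLieAlg} together with the definition of the Witt algebra: the identification $a_n^* \mapsto L_n$ matching the structure constants $(s-t)$ is exactly the intended (implicit) argument. Your extra bookkeeping---checking that $W_+$ is closed under the bracket, that the index ranges align, and that the ground field is immaterial since the structure constants are integers---is sound and fills in details the paper leaves unstated.
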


The previous theorem and Theorem~\ref{thrm:CMM} means that the universal enveloping algebra of the positive Witt Lie algebra is isomorphic to $\overline{A}^*$. 
Recent work has shown that the universal enveloping algebra of the positive Witt Lie algebra, i.e. $\overline{A}^*$, is not Noetherian using algebraic geometry \cite{wittnoetherian}.
This gives us a question on if this can be proven, or other algebraic properties can be deduced, using the structure of $\overline{A}^*$ combinatorially.

\subsection{Primitives of Dual Orbit Polytopes}
We next consider $\pp{\nOP^*}$ and the commutator $\gamma$ in this context.

\begin{theorem}
    The product of the dual Hopf monoid of orbit polytopes is
    \[
    \mu^*_{S,T}(\Or^*_\alpha \otimes \Or^*_\beta) = \begin{cases}
        \Or^*_{\alpha \cdot \beta} + \Or^*_{\alpha \odot \beta} + (\Or_\alpha \times \Or_\beta)^*  & \text{if } \alpha \text{ or } \beta \text{ have more than one part,} \\ 
        \Or^*_{\alpha \cdot \beta} + \Or^*_{\alpha \odot \beta} & \text{otherwise,}
    \end{cases}
    \]
    for $\alpha$ a composition of $|S|$ and $\beta$ a composition of $|T|$, so that $\Or_\alpha \in \pp{\nOP}[S], \Or_\beta \in \pp{\nOP}[T]$. 
\end{theorem}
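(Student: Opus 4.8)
The plan is to compute the dual product straight from \eqref{eq: dual product and coproduct}, so that
\[
\mu^*_{S,T}(\Or^*_\alpha \otimes \Or^*_\beta) = \sum_{\Delta_{S,T}(z) = \Or_\alpha \otimes \Or_\beta} z^*,
\]
which turns the whole problem into enumerating the basis elements $z \in \nOP[I]$ whose coproduct equals $\Or_\alpha \otimes \Or_\beta$. Since $\Or^*_\alpha$ and $\Or^*_\beta$ are primitive, $\Or_\alpha$ and $\Or_\beta$ are indecomposable full-support orbit polytopes on $S$ and $T$; I would keep this indecomposability in reserve, as it is precisely what prevents spurious terms from appearing. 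I would then organize the count according to whether $z$ is a single orbit polytope or a nontrivial product.

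When $z = \Or_\gamma$ is a single orbit polytope, \eqref{eq:coproductorbit} gives $\Delta_{S,T}(\Or_\gamma) = \Or_{\gamma\vert_S} \otimes \Or_{\gamma/_S}$, so I need $\gamma\vert_S = \alpha$ and $\gamma/_S = \beta$. Reading the uniqueness clause of that coproduct in reverse, the cut point $|S|$ either lands on a boundary between parts of $\gamma$ --- forcing $\gamma = \alpha \cdot \beta$ by concatenation --- or strictly inside a part --- forcing $\gamma = \alpha \odot \beta$ by near-concatenation; these are the only possibilities, and they differ in their number of parts, hence are distinct. This yields the two terms $\Or^*_{\alpha\cdot\beta}$ and $\Or^*_{\alpha\odot\beta}$. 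When $z$ is a product, I would mirror the computation in \Cref{thrm:DualProdAssociahedra}: compatibility of the product and coproduct forces the restriction of $z$ to equal $\Or_\alpha$ and its contraction to equal $\Or_\beta$. Because $\Or_\alpha$ and $\Or_\beta$ are indecomposable, a positive-dimensional one has only the trivial product decomposition, so the blocks of $z$ must be exactly $S$ and $T$, giving $z = \Or_\alpha \times \Or_\beta$. The one place this needs care is the degeneracy caused by points: a point is normally equivalent to a product of points, so without indecomposability one could peel off spurious point-blocks and produce extra terms; I would rule these out by observing that a positive-dimensional indecomposable factor can never acquire a point factor under restriction or contraction, which pins the block structure down to $S$ and $T$.

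The crux --- and the step I expect to be the main obstacle --- is deciding whether $\Or_\alpha \times \Or_\beta$ is genuinely a new basis element or already appears among the single-orbit terms, and I would settle this through normal equivalence. If both $\alpha$ and $\beta$ have a single part, then, being primitive, $S$ and $T$ are singletons, $\Or_\alpha$ and $\Or_\beta$ are points, and $\Or_\alpha \times \Or_\beta$ is again a point, which is normally equivalent to $\Or_{\alpha\odot\beta} = \Or_{(|I|)}$; thus the product term collapses onto $\Or^*_{\alpha\odot\beta}$ and only two terms survive. If instead $\alpha$ or $\beta$ has more than one part, then $\Or_\alpha \times \Or_\beta$ is positive-dimensional and all of its edge directions lie within $S$ or within $T$, whereas $\Or_{\alpha\cdot\beta}$ and $\Or_{\alpha\odot\beta}$ are full-support orbit polytopes with at least two distinct coordinate values and therefore possess edges crossing between $S$ and $T$; consequently $\Or_\alpha \times \Or_\beta$ is distinct from both and contributes a third term. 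Assembling the two cases gives the stated piecewise formula. The delicate points to verify are that all points are normally equivalent --- so the single-part product truly merges with $\Or_{\alpha\odot\beta}$ --- and the edge-direction comparison that keeps the product distinct from both single orbit polytopes in every other case.
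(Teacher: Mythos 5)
Your proposal is correct, and its skeleton matches the paper's: expand $\mu^*_{S,T}(\Or^*_\alpha\otimes\Or^*_\beta)$ as a sum over basis elements $z$ with $\Delta_{S,T}(z)=\Or_\alpha\otimes\Or_\beta$, handle single orbit polytopes via the uniqueness clause of \eqref{eq:coproductorbit} (yielding exactly $\Or^*_{\alpha\cdot\beta}$ and $\Or^*_{\alpha\odot\beta}$), and pin down product terms via the compatibility axiom. Where you genuinely diverge is in how the product terms are controlled. The paper uses a dimension count: $\dim z\le |I|-k$ for $z$ with $k$ factors, while the coproduct forces $\dim z\ge\dim\Or_\alpha+\dim\Or_\beta=|I|-2$, hence $k\le 2$; this argument only makes sense when both $\alpha$ and $\beta$ have at least two parts. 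You instead argue from indecomposability, which (once spelled out via edge directions: a positive-dimensional orbit polytope is $S_n$-symmetric, so it has edges parallel to every $e_i-e_j$ and therefore cannot be normally equivalent to a product over a nontrivial decomposition) forces $S$, respectively $T$, to lie inside a single block of $z$. Both routes work; the one sentence of yours that does not carry its weight is the claim that a positive-dimensional indecomposable factor ``can never acquire a point factor under restriction or contraction,'' which as phrased is too vague --- the block-pinning really rests on the edge-direction fact just stated.

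The notable payoff of your route is the degenerate case, where your proposal is actually more correct than the paper's own proof. The paper ends by asserting that whenever \emph{either} $\alpha$ or $\beta$ has one part, $(\Or_\alpha\times\Or_\beta)^*=\Or^*_{\alpha\odot\beta}$ and only two terms survive; this is false in the mixed case (a point times a positive-dimensional orbit polytope has no edges crossing between $S$ and $T$, so it is not normally equivalent to $\Or_{\alpha\odot\beta}$) and contradicts the theorem's own case split, which assigns three terms there. Your observation that primitivity forces a one-part composition onto a singleton ground set is exactly the missing ingredient: it makes the collapse $(\Or_\alpha\times\Or_\beta)^*=\Or^*_{\alpha\odot\beta}$ occur precisely in the ``both points'' case, and it excludes inputs for which the stated formula genuinely fails. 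Indeed, if one allowed $\beta=(2)$ on $T=\{3,4\}$ with $\alpha=(1,1)$ on $S=\{1,2\}$, then $z=\Or_{(1,1,1),\{1,2,3\}}\times\Or_{(1),\{4\}}$ also satisfies $\Delta_{S,T}(z)=\Or_\alpha\otimes\Or_\beta$, since its contraction is a product of points and hence normally equivalent to the point $\Or_{(2),T}$, producing a fourth term not on the list. So your proposal both proves the statement and repairs the flaw in the paper's treatment of the point cases.
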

 \begin{proof}
     Let $I = S \sqcup T$ be some decomposition of a finite set $I$.
     By Proposition~\ref{prop:dualprodcoprod}, the product in the dual Hopf monoid $\nOP^*$ is
     \begin{equation*}
         \mu^*_{S,T}(\Or^*_\alpha \otimes \Or^*_\beta) = \sum_{\Delta_{S,T}(z)= \Or_\alpha \otimes \Or_\beta} z^*.
    \end{equation*}
    
     We can first consider the case when $z \in \nOP[I]$ is a single orbit polytope. So say $z = \Or_\lambda$ for some composition $\lambda$ of $|I|$. This means $\Delta_{S,T}(\Or_\lambda) = \Or_\alpha \otimes \Or_\beta$ which holds if and only if $\Or_{\lambda \vert_S} \otimes \Or_{\lambda /_S} = \Or_\alpha \otimes \Or_\beta$ where $\lambda\vert_S$ and $\lambda/_S$ are the unique pair of compositions satisfying 
    \begin{enumerate}
         \item $\lambda\vert_S$ is a composition of $|S|$ and $\lambda/_S$ is a composition of $|T|$, and
         \item either $\lambda\vert_S \cdot \lambda/_S = \lambda$ or $\lambda\vert_S \odot \lambda/_S = \lambda$.
     \end{enumerate}
     The first condition means that our compositions must be the same, $\lambda\vert_S = \alpha$ and $\lambda/_S = \beta$. The second condition tells us that the only compositions which $\lambda$ can be are $\alpha \cdot \beta$ or $\alpha \odot \beta$.
     So we have the terms $\Or^*_{\alpha \cdot \beta}$ and $\Or^*_{\alpha \odot \beta}$ appearing in the expression for $\mu_{S,T}^*(\Or_\alpha^* \otimes \Or_\beta^*)$.
    
     Now suppose $z \in \nOP[I]$ is a product of two or more orbit polytopes, i.e. $I =S_1 \sqcup \cdots \sqcup S_k$ and $z = \Or_{\lambda_1} \times \cdots \times \Or_{\lambda_n}$ for $\lambda_i$ a composition of $|S_i|$. 
    
     If $\alpha$ and $\beta$ have more than one part, then $\dim \Or_\alpha = |S|-1$ and $\dim \Or_\beta = |T|-1$.
     Moreover $\dim z \leq |I|-k$ since the dimension of $\Or_{\lambda_i}$ is less than or equal to $|S_i|-1$.
     In order for the inequality 
     \[
     \Delta_{S,T}(\Or_{\lambda_1,S_1} \times \cdots \times \Or_{\lambda_n,S_k}) = \Or_{\alpha} \otimes \Or_{\beta}
     \]
     to hold, we need $\dim z\ge \dim\Or_\alpha +\dim\Or_\beta$, meaning $k\le 2$.
     We have already explored the case $k=1$.
     For $k=2$, this means $z= \Or_{\lambda_1} \times \Or_{\lambda_2}$.
     The compatibility axiom gives 
     \begin{align*}
         \Delta_{S,T}(\Or_{\lambda_1} \times \Or_{\lambda_2}) &= \Delta_{S,T}(\mu_{S,T}(\Or_{\lambda_1} \otimes \Or_{\lambda_2})) \\
         &= (\mu_{S,\emptyset} \otimes \mu_{\emptyset,T})(\id_S \otimes \beta_{\emptyset,\emptyset} \otimes \id_T)(\Delta_{S,\emptyset}(\Or_{\lambda_1}) \otimes \Delta_{\emptyset,T}(\Or_{\lambda_2})) \\
         &= (\mu_{S,\emptyset} \otimes \mu_{\emptyset,T})(\Or_{\lambda_1} \otimes 1 \otimes 1 \otimes \Or_{\lambda_2})\\
         &= \Or_{\lambda_1} \otimes \Or_{\lambda_2}.
     \end{align*}
    Thus $\alpha = \lambda_1$ and $\beta = \lambda_2$.
    This gives rise to the term $(\Or_\alpha \times \Or_\beta)^*$ in the expression for $\mu_{S,T}^*(\Or_\alpha \otimes \Or_\beta^*)$
    Since $\alpha$ and $\beta$ have more than one part, the terms $\Or_{\alpha \cdot \beta}$ and $\Or_{\alpha \odot \beta}$ cannot be expressed as products of orbit polytopes, so we have found a third distinct term.
    
    If either of $\alpha$ or $\beta$ have one part, then the corresponding orbit polytope is just a point.
    In this case, $(\Or_\alpha \times \Or_\beta)^*=\Or_{\alpha\odot\beta}^*$, so $\mu_{S,T}$ consists of only two terms.
     
 \end{proof}

With this product we can now find the lie bracket on the primitives $\pp{\nOP^*}$.

\begin{theorem}\label{thm:liebracketorbit}
    The commutator $\gamma: \mathcal{P}(\nOP^*)[S] \otimes \mathcal{P}(\nOP^*)[T] \to \mathcal{P}(\nOP^*)[I]$ is
    \[
    \gamma(\Or^*_\alpha \otimes \Or^*_\beta) =  \Or^*_{\alpha \cdot \beta}  - \Or^*_{\beta \cdot \alpha} + \Or^*_{\alpha \odot \beta} - \Or^*_{\beta \odot \alpha}. 
    \]
\end{theorem}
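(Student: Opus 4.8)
The plan is to substitute the product formula from the preceding theorem directly into the definition of the commutator. By \Cref{thrm:HopfAreLie}, the Lie bracket on $\nOP^*$ is
\[
\gamma(\Or^*_\alpha \otimes \Or^*_\beta) = \mu^*_{S,T}(\Or^*_\alpha \otimes \Or^*_\beta) - \mu^*_{T,S}(\Or^*_\beta \otimes \Or^*_\alpha),
\]
and by \Cref{thrm:PrimitivesLieSub} this restricts to the Lie submonoid $\pp{\nOP^*}$, so it suffices to evaluate the right-hand side on the primitive generators $\Or^*_\alpha$ and $\Or^*_\beta$.

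The key structural observation is that the condition distinguishing the two branches of the product formula---whether $\alpha$ or $\beta$ has more than one part---is symmetric under interchanging $\alpha$ and $\beta$. Consequently the same branch governs both $\mu^*_{S,T}(\Or^*_\alpha \otimes \Or^*_\beta)$ and $\mu^*_{T,S}(\Or^*_\beta \otimes \Or^*_\alpha)$. In every case the two expansions share the common non-product contributions $\Or^*_{\alpha \cdot \beta} + \Or^*_{\alpha \odot \beta}$ and $\Or^*_{\beta \cdot \alpha} + \Or^*_{\beta \odot \alpha}$ respectively, so subtracting yields the four terms $\Or^*_{\alpha \cdot \beta} - \Or^*_{\beta \cdot \alpha} + \Or^*_{\alpha \odot \beta} - \Or^*_{\beta \odot \alpha}$.

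It then remains to handle the product terms. When the first branch applies, $\mu^*_{S,T}$ contributes $(\Or_\alpha \times \Or_\beta)^*$ and $\mu^*_{T,S}$ contributes $(\Or_\beta \times \Or_\alpha)^*$; since the product of orbit polytopes is commutative in the sense of Hopf monoids, these represent the same basis element of $\nOP[I]$, so their duals coincide and the two terms cancel. When the second branch applies, no product term appears in either expansion. Either way only the four advertised terms survive, which is exactly the claimed formula. I do not anticipate a substantive obstacle; the only point demanding care is confirming that the product term is present in $\mu^*_{S,T}$ precisely when it is present in $\mu^*_{T,S}$, which follows immediately from the symmetry of the branch condition and makes the cancellation automatic.
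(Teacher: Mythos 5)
Your proof is correct and follows exactly the argument the paper intends: the paper states this theorem without proof as an immediate consequence of the preceding product formula, and your computation (expand both $\mu^*_{S,T}$ and $\mu^*_{T,S}$, note the branch condition is symmetric in $\alpha$ and $\beta$, and cancel $(\Or_\alpha \times \Or_\beta)^* = (\Or_\beta \times \Or_\alpha)^*$ by commutativity of the product) is precisely the same derivation used in the paper's analogous proof for $\pp{\nPI^*}$. No gaps; the observation that concatenation and near-concatenation are not commutative is what leaves the four terms standing, in contrast to the permutahedron case where everything cancels.
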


\section{The Brion Map on Generalized Permutahedra} \label{sec:brionmap}

So far we described the internal structure of our Hopf monoids in Section~\ref{sec:lieprimitives}.
We now turn to the external perspective of studying a Hopf monoid by exploring Hopf monoid morphisms, in particular the Brion map.
The Brion map was introduced by Aguiar and Ardila \cite{AguiarArdila} and described explicitly by Ardila and Sanchez \cite{ArdilaSanchez}.
Motivated by Brion's theorem \cite{Brion1988}, the Brion map connects a generalized permutahedron with posets coming from the vertices, extending the correspondence described in Section~\ref{subsec:gp}.

\begin{definition}
The \Def{Brion map} $B:\nGP[I] \to \P[I]$ is defined as
$$
B(P) = \sum_{v \text{ vertex of } P } \poset_v(P)
$$
where $\poset_v(P)$ is the poset corresponding to the tangent cone of the generalized permutahedron $P$ at the vertex $v$.
\end{definition}

It was shown that this is a morphism of Hopf monoids \cite{ArdilaSanchez}.
In this section, we give explicit expressions for the Brion map when the domain is restricted to the Hopf monoids of $\nPI$, $\nA$, and $\nOP$.

\subsection{The Brion Map on Permutahedra} \label{subsec:brionmappermutahedra}

We will first consider the Hopf monoid of permutahedra $\nPI$.
A \Def{chain} $c$ over the set $I$ is a poset where any two elements are comparable; for any $x \in I$ and any $y \in I$ we have $x \leq_c y$ or $y \leq_c x$. 
\begin{theorem}\label{thm:brionmappermutahedra}
    The Brion map on the Hopf monoid of permutahedra is
        $$B(\pi_I) = \sum_{c \text{ chain over } I} c.$$
\end{theorem}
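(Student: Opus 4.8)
The plan is to exploit the bijection between the vertices of $\pi_I$ and the linear orders on $I$, and to show that the tangent cone at each vertex is precisely the poset cone of the corresponding chain. Since $B(\pi_I) = \sum_v \poset_v(\pi_I)$, once each vertex poset is identified as a chain and every chain is seen to arise exactly once, the formula drops out.

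First I would recall the vertex structure of the standard permutahedron. By definition $\pi_I$ is the convex hull of the points $v_f = \sum_{i \in I} f(i)\, e_i$ as $f$ ranges over the bijections $f\colon I \to [n]$ with $n = |I|$, and these $n!$ points are exactly its vertices. Thus the vertices of $\pi_I$ are in bijection with the linear orders on $I$, equivalently with the chains over $I$.

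Next I would compute the tangent cone at a fixed vertex $v_f$. The edges of $\pi_I$ out of $v_f$ join it to the vertices obtained by transposing the two elements carrying consecutive values: if $a = f^{-1}(k)$ and $b = f^{-1}(k+1)$, the outward edge direction is $e_a - e_b$. Hence $\cone_{v_f}(\pi_I)$ is generated by the vectors $e_{f^{-1}(k)} - e_{f^{-1}(k+1)}$ for $1 \le k \le n-1$. Under the poset-cone dictionary of \Cref{subsec:gp}, in which $e_i - e_j$ records $i \ge j$, these generators encode the cover relations $f^{-1}(1) \ge f^{-1}(2) \ge \cdots \ge f^{-1}(n)$; that is, the chain $c_f$ on $I$ in which a smaller $f$-value sits higher in the poset, matching \Cref{fig:VertexPosetConeExample}. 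Since every $e_{f^{-1}(j)} - e_{f^{-1}(k)}$ with $j < k$ is a nonnegative combination of consecutive generators, the cone spanned by the consecutive transpositions is exactly $\cone(c_f)$, so $\poset_{v_f}(\pi_I) = c_f$.

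Finally I would assemble the pieces. The assignment $f \mapsto c_f$ is a bijection from the vertices of $\pi_I$ onto the chains over $I$, so $B(\pi_I) = \sum_f \poset_{v_f}(\pi_I) = \sum_{c \text{ chain over } I} c$. The step requiring the most care is the description of the $1$-skeleton of $\pi_I$ — that the edges out of $v_f$ are exactly the consecutive-value transpositions with directions $e_a - e_b$ — together with the check that these generate the full chain cone and not merely a subcone; this is precisely where the combinatorics of the permutahedron and the poset-cone correspondence must be lined up.
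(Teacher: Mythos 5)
Your proposal is correct and follows essentially the same route as the paper's proof: identify the vertices of $\pi_I$ with bijections $I\to[n]$, observe that the edges out of a vertex are the consecutive-value transpositions with directions of the form $e_a-e_b$, read off the chain from these generators, and conclude via the orbit/bijection between vertices and chains. Your extra check that the cone generated by the cover relations equals the full chain cone $\cone(c_f)$ is a small refinement the paper leaves implicit, but it does not change the argument.
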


\begin{proof}
     Suppose $|I| = n$ and consider a vertex $v$ of the permutahedron, which has the general form 
     \[
         v = (n,n-1,\ldots,1) = n e_{i_1} + \cdots + (n-(j-1)) e_{i_j} + (n-j) e_{i_j +1} + \cdots + 1 e_{i_n}
     \]
     where $I = \{i_1,\ldots,i_n \}$.
     The vertices of $\pi_I$ neighboring $v$ are those obtained by swapping the coefficient of $e_{i_j}$ and $e_{i_{j+1}}$ in this expression for some $1\le j\le n-1$.
     Thus the vector obtained by subtracting $v$ from one of its neighboring vertices $w$ is $w-v=e_{i_{j+1}}-e_{i_j}$.
     This corresponds to the relation $i_{j+1} \geq i_{j}$ in $\poset_v(\pi_I)$.
     Thus $\poset_v(\pi_I)$ is simply the chain $i_n \geq i_{n-1} \geq \ldots \geq i_{2} \geq i_1 $ over the set $I$.
     
     Since the vertices of $\pi_I$ are exactly the orbit of $v$ under the action of the symmetric group $S_I$ on $\mathbb{R}I$ by permuting coordinates, each chain over $I$ appears as a poset corresponding to a vertex of $\pi_I$ exactly once.
\end{proof}

We can also consider the Brion map on the Hopf algebra of standard permutahedra $\Pi$ by applying the Fock functor.
All chains on $n$ elements belong to the equivalence class $c_n$ of unlabelled chains.
So after applying the Fock functor to $\nPI$ to pass to the Hopf algebra $\Pi$ of standard permutahedra, the Brion map becomes 
\[
    B(\pi_n)=n!\, c_n.
\]

\subsection{The Brion Map on Associahedra}\label{subsec:brionmapassociahedra}

We define a \Def{rooted binary tree} recursively.
We say that the empty set is a binary tree. Otherwise, a binary tree has a root vertex $v$, a left sub-tree $T_1$, and a right sub-tree $T_2$ which are both rooted binary trees. 
We will follow the convention of Loday \cite{Loday,Loday2005} which depicts a rooted binary tree with the root at the bottom.

\begin{figure}
        \centering
        \scalebox{0.7}{
        \begin{tikzpicture}[nod/.style={draw=black,fill=black,circle,inner sep=1.5pt}, edge/.style={color=norange, line width=2}, scale=0.40]

\node at (-2.35,0) {$ $};

    \draw[black,thick] (1,0) -- (-2,3);
    \draw[black,thick] (-1,2) -- (0,3);
    \draw[black,thick] (0,1) -- (2,3);
    \draw[black,thick] (1,0) -- (4,3);
    
    \draw[black,thick] (1,-1) -- (1,0);
    
    \draw[edge] (1,0) -- (-1,2);

    \draw[black,thick] (8,0) -- (5,3);
    \draw[black,thick] (8,2) -- (7,3);
    \draw[black,thick] (7,1) -- (9,3);
    \draw[black,thick] (8,0) -- (11,3);
    
    \draw[black,thick] (8,-1) -- (8,0);
    
    \draw[edge] (8,0) -- (7,1) -- (8,2);

    \draw[black,thick] (15,0) -- (12,3);
    \draw[black,thick] (13,2) -- (14,3);
    \draw[black,thick] (17,2) -- (16,3);
    \draw[black,thick] (15,0) -- (18,3);
    
    \draw[black,thick] (15,-1) -- (15,0);
    
    \draw[edge] (15,0) -- (13,2);
    \draw[edge] (15,0)-- (17,2);

    \draw[black,thick] (22,0) -- (19,3);
    \draw[black,thick] (23,1) -- (21,3);
    \draw[black,thick] (22,2) -- (23,3);
    \draw[black,thick] (22,0) -- (25,3);
    
    \draw[black,thick] (22,-1) -- (22,0);
    
    \draw[edge] (22,0) -- (23,1) -- (22,2);

    \draw[black,thick] (29,0) -- (26,3);
    \draw[black,thick] (30,1) -- (28,3);
    \draw[black,thick] (31,2) -- (30,3);
    \draw[black,thick] (29,0) -- (32,3);
    
    \draw[black,thick] (29,-1) -- (29,0);
    
    \draw[edge] (29,0) -- (30,1) -- (31,2);

\end{tikzpicture}
        }
    \end{figure}
    \begin{figure}
        \centering
        \scalebox{0.7}{
        \begin{tikzpicture}[nod/.style={draw=norange,fill=norange,circle,inner sep=1.5pt}, cover/.style={ thick, norange}, scale=0.60]

    \coordinate[nod] (a) at (-2,2);
    \coordinate[nod] (b) at (-1,1);
    \coordinate[nod] (c) at (0,0);
    
    \draw[cover] (c) -- (b) -- (a);
    
    \node[anchor=south] at (b) {\color{black}};
    \node[anchor=east] at (a) {\color{black}};
    \node[anchor=west] at (c) {\color{black}};
    

    \coordinate[nod] (d) at (4,2);
    \coordinate[nod] (e) at (3,1);
    \coordinate[nod] (f) at (4,0);
    
    \draw[cover] (f) -- (e) -- (d);

    \coordinate[nod] (g) at (7,1);
    \coordinate[nod] (h) at (9,1);
    \coordinate[nod] (i) at (8,0);
    
    \draw[cover] (i) -- (h);
    \draw[cover] (i) -- (g);

    \coordinate[nod] (j) at (12,2);
    \coordinate[nod] (k) at (13,1);
    \coordinate[nod] (l) at (12,0);
    
    \draw[cover] (l) -- (k) -- (j);

    \coordinate[nod] (m) at (18,2);
    \coordinate[nod] (n) at (17,1);
    \coordinate[nod] (o) at (16,0);
    
    \draw[cover] (m) -- (n) -- (o);

\end{tikzpicture}
        }
    \caption{The rooted binary trees with $3$ internal vertices represented in two ways.}
    \label{figure:Y5RBTExample}
\end{figure}
Figure~\ref{figure:Y5RBTExample} shows two visual representations of a rooted binary tree; we will make use of both.
In the upper representation, each vertex is connected by an edge to both its left and right sub-trees, regardless of whether or not a sub-tree is empty.
Edges connecting to empty sub-trees are called ``leaves'' and this depiction is the ``leaves representation''.
The lower representation is obtained by deleting the leaves, leaving behind only the internal vertices and edges of the rooted binary tree.
We call this the ``pruned representation''.
If a rooted binary tree has $n$ internal vertices, then it has $n+1$ leaves.
Let $Y_n$ denote the set of all rooted binary trees with $n$ internal vertices.


Rooted binary trees are in correspondence with the vertices of associahedra. 
The following construction is by Loday \cite{Loday}.
For a given linear order $\ell=\ell_1<\ell_2<\dots<\ell_n$, introduce a new symbolic minimal element $\ell_0$.
Label the leaves of $T$ with $\ell_0,\ell_1\ldots,\ell_n$ going from left to right.
This induces a canonical ordering of the $n$ internal vertices of $T$; internal vertex $\ell_i$ is the internal vertex between leaves $\ell_{i-1}$ and $\ell_i$.
This is known as the \Def{binary search labelling} with respect to the linear order $\ell$ \cite[Section 8.1]{Postnikov}.

To each tree $T \in Y_n$, we can associate an integral point $v_\ell(T) \in \mathbb{R}^n$ as follows.
Define $l_i$ and $r_i$ to be the number of leaves that are descendants on the left side and right side of internal vertex $\ell_i$ in $T$, respectively.
We can now construct the point $v_\ell(T) = (l_1 r_1, \ldots, l_i r_i, \ldots, l_n r_n)$.
As an example, using the tree from Figure~\ref{fig:RBTlabeling} we have $v(T) = (2,1,6,1)$ corresponding to the standard ordering $1<2<3<4$.
The \Def{associahedron} $a_\ell$ is the convex hull of $\{v_\ell(T): T \in Y_n\}$ \cite{Loday, Postnikov}.

\begin{figure}
   \centering\scalebox{.7}{
   \begin{tikzpicture}[scale=0.450,baseline=-0.2ex]
    \coordinate (1) at (-2,2);
    \coordinate (2) at (-1,3);
    \coordinate (3) at (0,0);
    \coordinate (4) at (3,3);
    
    \node[left] at (1) {};
    \node[right] at (2) {};
    \node[left] at (3) {};
    \node[right] at (4) {};
    
    \draw (3) -- (1);
    \draw (1) -- (2);
    \draw (3) -- (4);
    
    \draw (2) -- (0,4);
    \draw (2) -- (-2,4);
    
    \draw (4) -- (2,4);
    \draw (4) -- (4,4);
    
    \draw (1) -- (-4,4);
    
    \draw (3) -- (0,-0.5);

\end{tikzpicture}\begin{tikzpicture}[scale=0.450,baseline=-0.2ex]
    \coordinate (1) at (-2,2);
    \coordinate (2) at (-1,3);
    \coordinate (3) at (0,0);
    \coordinate (4) at (3,3);
    
    \draw[->] (-5.75,2)--(-4.5,2);
    \node[draw,opacity=0] at (-5.5,2) {};
    \node[left] at (1) {};
    \node[right] at (2) {};
    \node[left] at (3) {};
    \node[right] at (4) {};
    
    \draw (3) -- (1);
    \draw (1) -- (2);
    \draw (3) -- (4);
    
    \draw (2) -- (0,4);
    \draw (2) -- (-2,4);
    
    \draw (4) -- (2,4);
    \draw (4) -- (4,4);
    
    \draw (1) -- (-4,4);
    
    \draw (3) -- (0,-0.5);
    
    \node[above] at (-4,4) {$0$};
    \node[above] at (-2,4) {$1$};
    \node[above] at (0,4) {$2$};
    \node[above] at (2,4) {$3$};
    \node[above] at (4,4) {$4$};

\end{tikzpicture}\begin{tikzpicture}[scale=0.450,baseline=-0.2ex]
    \coordinate (1) at (-2,2);
    \coordinate (2) at (-1,3);
    \coordinate (3) at (0,0);
    \coordinate (4) at (3,3);
    
    \draw[->] (-5.75,2)--(-4.5,2);
    \node[draw,opacity=0] at (-5.5,2) {};
    \node[left] at (1) {$1$};
    \node[right] at (2) {$2$};
    \node[left] at (3) {$3$};
    \node[right] at (4) {$4$};
    
    \draw (3) -- (1);
    \draw (1) -- (2);
    \draw (3) -- (4);
    
    \draw (2) -- (0,4);
    \draw (2) -- (-2,4);
    
    \draw (4) -- (2,4);
    \draw (4) -- (4,4);
    
    \draw (1) -- (-4,4);
    
    \draw (3) -- (0,-0.5);
    
    \node[above,opacity=0.35] at (-4,4) {$0$};
    \node[above,opacity=0.35] at (-2,4) {$1$};
    \node[above,opacity=0.35] at (0,4) {$2$};
    \node[above,opacity=0.35] at (2,4) {$3$};
    \node[above,opacity=0.35] at (4,4) {$4$};

\end{tikzpicture}}
   \caption{Labeling the internal vertices.}
   \label{fig:RBTlabeling}
\end{figure}

Moreover, $Y_n$ has a poset structure called the Tamari order \cite{Loday}. Given $A,B \in Y_n$, we have a covering relation $B \leq A$ if $A$ can be obtained from $B$ by replacing locally a sub-tree of $B$ of the form 
\begin{tikzpicture}[scale=0.50,baseline=-0.2ex]
    \draw (-1,1) -- (0,0);
    \draw (-0.5,0.5)--(0,1);
    \draw (1,1)-- (0,0);
    \draw (0,0) -- (0,-0.5);
    
    
    \draw[gray] (0.75,1) rectangle (1.25,1.5);
    \draw[gray] (-0.25,1) rectangle (0.25,1.5);
    \draw[gray] (-1.25,1) rectangle (-0.75,1.5);
    
    \draw[gray] (-0.25,-0.5) rectangle (0.25,-1);
\end{tikzpicture}
with 
\begin{tikzpicture}[scale=0.50,baseline=-0.2ex]
    \draw (-1,1) -- (0,0);
    \draw (0.5,0.5)--(0,1);
    \draw (1,1)-- (0,0);
    \draw (0,0) -- (0,-0.5);

    \draw[gray] (0.75,1) rectangle (1.25,1.5);
    \draw[gray] (-0.25,1) rectangle (0.25,1.5);
    \draw[gray] (-1.25,1) rectangle (-0.75,1.5);
    
    \draw[gray] (-0.25,-0.5) rectangle (0.25,-1);
\end{tikzpicture}.
This operation and its inverse are called \Def{tree rotations}.
Notice that the local sub-tree pictured here has exactly one internal edge, both before and after performing a tree rotation.
Hence each internal edge of a rooted binary tree $T$ corresponds to a unique tree rotation that can be performed on $T$.

Each covering relation in the Tamari order corresponds to an edge of the associahedron \cite{Loday2005}. 
Thus given the rooted binary tree associated to a vertex of the associahedron, the neighboring vertices are the ones associated to rooted binary trees obtained by performing a tree rotation.
From this we can determine the edge directions incident to each vertex of the associahedron.

In order to do this, we need to analyze how tree rotations affect the ordering of the internal vertices of the rooted binary tree.
Consider some internal edge of a tree $T$ with vertices $\ell_i$ and $\ell_j$, where $\ell_i$ is a child of $\ell_j$.
If $\ell_i$ is a left child of $\ell_j$, then the tree has the following form locally around $\ell_i$ and $\ell_j$, before and after performing a tree rotation:
        \begin{figure}[H]
            \centering
            \begin{tikzpicture}[scale=0.50,baseline=-0.2ex]
    \draw (-1,1) -- (0,0);
    \draw (-0.5,0.5)--(0,1);
    \draw (1,1)-- (0,0);
    \draw (0,0) -- (0,-0.5);

    \node[left] at (-0.5,0.5) {$\ell_i$};
    \node[left] at (0,-0.3) {$\ell_j$};
    
    \draw[gray] (0.75,1) rectangle (1.25,1.5);
    \draw[gray] (-0.25,1) rectangle (0.25,1.5);
    \draw[gray] (-1.25,1) rectangle (-0.75,1.5);
    
    \draw[gray] (-0.25,-0.5) rectangle (0.25,-1);
\end{tikzpicture}
            $\longmapsto$
            \begin{tikzpicture}[scale=0.50,baseline=-0.2ex]
    \draw (-1,1) -- (0,0);
    \draw (0.5,0.5)--(0,1);
    \draw (1,1)-- (0,0);
    \draw (0,0) -- (0,-0.5);
    
    \node[right] at (0.5,0.3) {$\ell_j$};
    \node[right] at (0,-0.2) {$\ell_i$};
    
    \draw[gray] (0.75,1) rectangle (1.25,1.5);
    \draw[gray] (-0.25,1) rectangle (0.25,1.5);
    \draw[gray] (-1.25,1) rectangle (-0.75,1.5);
    
    \draw[gray] (-0.25,-0.5) rectangle (0.25,-1);
\end{tikzpicture}
        \end{figure}
\noindent Similarly, if $\ell_i$ is a right child of $\ell_j$, then the tree before and after the rotation has the form:
    \begin{figure}[H]
        \centering
        \begin{tikzpicture}[scale=0.50,baseline=-0.2ex]
    \draw (-1,1) -- (0,0);
    \draw (0.5,0.5)--(0,1);
    \draw (1,1)-- (0,0);
    \draw (0,0) -- (0,-0.5);
    
    \node[right] at (0.5,0.3) {$\ell_i$};
    \node[right] at (0,-0.2) {$\ell_j$};
    
    \draw[gray] (0.75,1) rectangle (1.25,1.5);
    \draw[gray] (-0.25,1) rectangle (0.25,1.5);
    \draw[gray] (-1.25,1) rectangle (-0.75,1.5);
    
    \draw[gray] (-0.25,-0.5) rectangle (0.25,-1);
\end{tikzpicture}
        $\longmapsto$
        \begin{tikzpicture}[scale=0.50,baseline=-0.2ex]
    \draw (-1,1) -- (0,0);
    \draw (-0.5,0.5)--(0,1);
    \draw (1,1)-- (0,0);
    \draw (0,0) -- (0,-0.5);
    
    \node[left] at (-0.5,0.3) {$\ell_j$};
    \node[left] at (0,-0.2) {$\ell_i$};
    
    \draw[gray] (0.75,1) rectangle (1.25,1.5);
    \draw[gray] (-0.25,1) rectangle (0.25,1.5);
    \draw[gray] (-1.25,1) rectangle (-0.75,1.5);
    
    \draw[gray] (-0.25,-0.5) rectangle (0.25,-1);
\end{tikzpicture}
    \end{figure}
\noindent Figure~\ref{fig:edgebijection} shows how the labelling of the internal vertices changes from all possible rotations for a particular rooted binary tree.

\begin{figure}
    \centering
    
    
    
    
    
    


\begin{tikzpicture}[scale=0.40,baseline=-0.2ex]
    \coordinate (1) at (-2,2);
    \coordinate (2) at (-1,3);
    \coordinate (3) at (0,0);
    \coordinate (4) at (3,3);
    
    \node[left] at (1) {$1$};
    \node[right] at (2) {$2$};
    \node[left] at (3) {$3$};
    \node[right] at (4) {$4$};
    
    \draw[thick] (3) -- (1);
    
    \draw[line width=8pt, line cap = round, opacity=0.25, norange] (1) -- (2);
    \draw[line width=8pt, line cap = round, opacity=0.25, norange] (2) -- (0,4);
    \draw[norange,line width= 2pt] (1) -- (2);
    
    \draw[thick] (3) -- (4);
    
    \draw (2) -- (0,4);
    
    \draw[line width=8pt, line cap = round, opacity=0.25, norange] (2) -- (-2,4);
    \draw (2) -- (-2,4);
    
    \draw (4) -- (2,4);
    \draw (4) -- (4,4);
    
    \draw[line width=8pt, line cap = round, opacity=0.25, norange] (1) -- (-4,4);
    
    \draw (1) -- (-4,4);
    
    \draw (3) -- (0,-0.5);

\end{tikzpicture}\hspace{1em}\begin{tikzpicture}[scale=0.40,baseline=-0.2ex]
    \coordinate (1) at (-2,2);
    \coordinate (2) at (-1,3);
    \coordinate (3) at (0,0);
    \coordinate (4) at (3,3);
    
    \node[left] at (1) {$1$};
    \node[right] at (2) {$2$};
    \node[left] at (3) {$3$};
    \node[right] at (4) {$4$};
    
    \draw[line width=8pt, line cap = round, opacity=0.25, norange] (1) -- (2);
    \draw[thick] (1) -- (2);
    
    \draw[line width=8pt, line cap = round, opacity=0.25, norange] (3) -- (4);
    \draw[thick] (3) -- (4);
    
    \draw[line width=8pt, line cap = round, opacity=0.25, norange] (1) -- (-4,4);
    \draw (1) -- (-4,4);
    
    \draw[line width=8pt, line cap = round, opacity=0.25, norange] (3) -- (1);
    \draw[norange,line width=2pt] (3) -- (1);
    
    \draw (2) -- (0,4);
    \draw (2) -- (-2,4);
    
    \draw (4) -- (2,4);
    \draw (4) -- (4,4);

    \draw (3) -- (0,-0.5);

\end{tikzpicture}\hspace{1em}\begin{tikzpicture}[scale=0.40,baseline=-0.2ex]
    \coordinate (1) at (-2,2);
    \coordinate (2) at (-1,3);
    \coordinate (3) at (0,0);
    \coordinate (4) at (3,3);
    
    \node[left] at (1) {$1$};
    \node[right] at (2) {$2$};
    \node[left] at (3) {$3$};
    \node[right] at (4) {$4$};
    
    \draw[line width=8pt, line cap = round, opacity=0.25, norange] (3) -- (1);
    \draw[thick] (3) -- (1);
    
    \draw[thick] (1) -- (2);

    \draw (2) -- (0,4);
    \draw (2) -- (-2,4);
    
    \draw[line width=8pt, line cap = round, opacity=0.25, norange] (4) -- (2,4);
    \draw[line width=8pt, line cap = round, opacity=0.25, norange] (4) -- (4,4);
    \draw (4) -- (2,4);
    \draw (4) -- (4,4);
    
    \draw[line width=8pt, line cap = round, opacity=0.25, norange] (3) -- (4);
    \draw[norange,line width=2pt] (3) -- (4);
    
    \draw (1) -- (-4,4);
    
    \draw (3) -- (0,-0.5);

\end{tikzpicture}
\vspace{0.5em}

\begin{tikzpicture}[scale=0.40,baseline=-0.2ex]
    \coordinate (1) at (-2,2);
    \coordinate (2) at (-1,3);
    \coordinate (2') at (-3,3);
    \coordinate (3) at (0,0);
    \coordinate (4) at (3,3);
    
    \draw[->] (0,5.5) -- (0,4.5);
    
    \node[left] at (1) {$2$};
    \node[left] at (2') {$1$};
    \node[left] at (3) {$3$};
    \node[right] at (4) {$4$};
    
    \draw[thick] (3) -- (1);
    
    \draw[line width=8pt, line cap = round, opacity=0.25, norange] (1) -- (0,4);
    \draw[norange,line width= 2pt] (1) -- (2');
    
    \draw[thick] (3) -- (4);
    
    \draw (1) -- (0,4);
    
    \draw[line width=8pt, line cap = round, opacity=0.25, norange] (2') -- (-2,4);
    \draw (2') -- (-2,4);
    
    \draw (4) -- (2,4);
    \draw (4) -- (4,4);
    
    \draw[line width=8pt, line cap = round, opacity=0.25, norange] (2') -- (-4,4);
    \draw[line width=8pt, line cap = round, opacity=0.25, norange] (1) -- (2');
    \draw (2') -- (-4,4);
    
    \draw (3) -- (0,-0.5);

\end{tikzpicture}\hspace{1em}\begin{tikzpicture}[scale=0.40,baseline=-0.2ex]
    \coordinate (1) at (-2,2);
    \coordinate (1') at (1,1);
    \coordinate (2) at (-1,3);
    \coordinate (3) at (0,0);
    \coordinate (4) at (3,3);
    
    \draw[->] (0,5.5) -- (0,4.5);
    
    \node[right] at (1') {$3$};
    \node[right] at (2) {$2$};
    \node[left] at (3) {$1$};
    \node[right] at (4) {$4$};
    
    \draw[line width=8pt, line cap = round, opacity=0.25, norange] (1') -- (2);
    \draw[thick] (1') -- (2);
    
    \draw[line width=8pt, line cap = round, opacity=0.25, norange] (3) -- (1');
    \draw[line width=8pt, line cap = round, opacity=0.25, norange] (1') -- (4);
    
    \draw[norange,line width = 2pt] (3) -- (1');
    \draw[thick] (1') -- (4);
    
    \draw[line width=8pt, line cap = round, opacity=0.25, norange] (3) -- (-4,4);
    \draw (3) -- (-4,4);
    
    
    \draw (2) -- (0,4);
    \draw (2) -- (-2,4);
    
    \draw (4) -- (2,4);
    \draw (4) -- (4,4);

    \draw (3) -- (0,-0.5);

\end{tikzpicture}\hspace{1em}\begin{tikzpicture}[scale=0.40,baseline=-0.2ex]
    \coordinate (1) at (-2,2);
    \coordinate (2) at (-1,3);
    \coordinate (3) at (0,0);
    \coordinate (4) at (3,3);
    \coordinate (4') at (-1,1);
    
    \draw[->] (0,5.5) -- (0,4.5);
    
    \node[left] at (1) {$1$};
    \node[right] at (2) {$2$};
    \node[left] at (3) {$4$};
    \node[left] at (4') {$3$};

    \draw[line width=8pt, line cap = round, opacity=0.25, norange] (3) -- (4,4);

    \draw[line width=8pt, line cap = round, opacity=0.25, norange] (3) -- (4');
    \draw[line width=8pt, line cap = round, opacity=0.25, norange] (4') -- (1);
    
    \draw[norange,line width = 2pt] (3) -- (4');
    \draw[thick] (4') -- (1);
    
    \draw[thick] (1) -- (2);
    
    \draw[line width=8pt, line cap = round, opacity=0.25, norange] (4') -- (2,4);
    \draw (4') -- (2,4);

    \draw (2) -- (0,4);
    \draw (2) -- (-2,4);

    \draw (4) -- (4,4);
    
    \draw (3) -- (4);
    
    \draw (1) -- (-4,4);
    
    \draw (3) -- (0,-0.5);

\end{tikzpicture}
    \caption{Bijection between internal edges of a tree and rotations}
    \label{fig:edgebijection}
\end{figure}

\begin{figure}
        \centering
        \scalebox{0.7}{
        \begin{tikzpicture}[nod/.style={draw=black,fill=black,circle,inner sep=1.5pt}, edge/.style={color=norange, line width=2}, scale=0.40]

\node at (-2.35,0) {$ $};

    \draw[black,thick] (1,0) -- (-2,3);
    \draw[black,thick] (-1,2) -- (0,3);
    \draw[black,thick] (0,1) -- (2,3);
    \draw[black,thick] (1,0) -- (4,3);
    
    \draw[black,thick] (1,-1) -- (1,0);
    
    \draw[edge] (1,0) -- (-1,2);

    \draw[black,thick] (8,0) -- (5,3);
    \draw[black,thick] (8,2) -- (7,3);
    \draw[black,thick] (7,1) -- (9,3);
    \draw[black,thick] (8,0) -- (11,3);
    
    \draw[black,thick] (8,-1) -- (8,0);
    
    \draw[edge] (8,0) -- (7,1) -- (8,2);

    \draw[black,thick] (15,0) -- (12,3);
    \draw[black,thick] (13,2) -- (14,3);
    \draw[black,thick] (17,2) -- (16,3);
    \draw[black,thick] (15,0) -- (18,3);
    
    \draw[black,thick] (15,-1) -- (15,0);
    
    \draw[edge] (15,0) -- (13,2);
    \draw[edge] (15,0)-- (17,2);

    \draw[black,thick] (22,0) -- (19,3);
    \draw[black,thick] (23,1) -- (21,3);
    \draw[black,thick] (22,2) -- (23,3);
    \draw[black,thick] (22,0) -- (25,3);
    
    \draw[black,thick] (22,-1) -- (22,0);
    
    \draw[edge] (22,0) -- (23,1) -- (22,2);

    \draw[black,thick] (29,0) -- (26,3);
    \draw[black,thick] (30,1) -- (28,3);
    \draw[black,thick] (31,2) -- (30,3);
    \draw[black,thick] (29,0) -- (32,3);
    
    \draw[black,thick] (29,-1) -- (29,0);
    
    \draw[edge] (29,0) -- (30,1) -- (31,2);

\end{tikzpicture}
        }
        
        \scalebox{0.7}{
        \begin{tikzpicture}[nod/.style={draw=norange,fill=norange,circle,inner sep=1.5pt}, cover/.style={ thick, norange}, scale=0.60]

    \coordinate[nod] (a) at (-2,2);
    \coordinate[nod] (b) at (-1,1);
    \coordinate[nod] (c) at (0,0);
    
    \draw[cover] (c) -- (b) -- (a);
    
    \node[anchor=south] at (b) {\color{black}};
    \node[anchor=east] at (a) {\color{black}};
    \node[anchor=west] at (c) {\color{black}};

    \node[left] at (a) {$1$};
    \node[left] at (b) {$2$};
    \node[left] at (c) {$3$};
    

    \coordinate[nod] (d) at (4,2);
    \coordinate[nod] (e) at (3,1);
    \coordinate[nod] (f) at (4,0);
    
    \draw[cover] (f) -- (e) -- (d);

    \node[right] at (d) {$2$};
    \node[left] at (e) {$1$};
    \node[right] at (f) {$3$};

    \coordinate[nod] (g) at (7,1);
    \coordinate[nod] (h) at (9,1);
    \coordinate[nod] (i) at (8,0);
    
    \draw[cover] (i) -- (h);
    \draw[cover] (i) -- (g);

    \node[left] at (g) {$1$};
    \node[right] at (h) {$3$};
    \node[below] at (i) {$2$};

    \coordinate[nod] (j) at (12,2);
    \coordinate[nod] (k) at (13,1);
    \coordinate[nod] (l) at (12,0);
    
    \draw[cover] (l) -- (k) -- (j);

    \node[left] at (j) {$2$};
    \node[right] at (k) {$3$};
    \node[left] at (l) {$1$};

    \coordinate[nod] (m) at (18,2);
    \coordinate[nod] (n) at (17,1);
    \coordinate[nod] (o) at (16,0);
    
    \draw[cover] (m) -- (n) -- (o);

    \node[right] at (m) {$3$};
    \node[right] at (n) {$2$};
    \node[right] at (o) {$1$};

\end{tikzpicture}
        }
        
        \scalebox{0.7}{
        \begin{tikzpicture}[nod/.style={draw=norange,fill=norange,circle,inner sep=1.5pt}, cover/.style={ thick, norange}, scale=0.60]

    \coordinate[nod] (a) at (-1,2);
    \coordinate[nod] (b) at (-1,1);
    \coordinate[nod] (c) at (-1,0);
    
    \draw[cover] (c) -- (b) -- (a);
    
    \node[anchor=south] at (b) {\color{black}};
    \node[anchor=east] at (a) {\color{black}};
    \node[anchor=west] at (c) {\color{black}};

    \node[right] at (a) {$1$};
    \node[right] at (b) {$2$};
    \node[right] at (c) {$3$};
    

    \coordinate[nod] (d) at (3.5,2);
    \coordinate[nod] (e) at (3.5,1);
    \coordinate[nod] (f) at (3.5,0);
    
    \draw[cover] (f) -- (e) -- (d);

    \node[right] at (d) {$2$};
    \node[right] at (e) {$1$};
    \node[right] at (f) {$3$};

    \coordinate[nod] (g) at (7,1);
    \coordinate[nod] (h) at (9,1);
    \coordinate[nod] (i) at (8,0);
    
    \draw[cover] (i) -- (h);
    \draw[cover] (i) -- (g);

    \node[left] at (g) {$1$};
    \node[right] at (h) {$3$};
    \node[below] at (i) {$2$};

    \coordinate[nod] (j) at (12.5,2);
    \coordinate[nod] (k) at (12.5,1);
    \coordinate[nod] (l) at (12.5,0);
    
    \draw[cover] (l) -- (k) -- (j);

    \node[right] at (j) {$2$};
    \node[right] at (k) {$3$};
    \node[right] at (l) {$1$};

    \coordinate[nod] (m) at (17,2);
    \coordinate[nod] (n) at (17,1);
    \coordinate[nod] (o) at (17,0);
    
    \draw[cover] (m) -- (n) -- (o);

    \node[right] at (m) {$3$};
    \node[right] at (n) {$2$};
    \node[right] at (o) {$1$};

\end{tikzpicture}
        }
    \caption{From rooted binary trees, to labeling, to posets}
    \label{figure:Y5RBTexample2}
\end{figure}

Denote by $p_\ell(T)$ the poset whose diagram is obtained by taking the binary search labeling on the internal vertices of the rooted binary tree $T$, deleting the leaves to get the pruned representation, then forgetting the distinction between left and right sub-trees.
This associates a poset on $\{1,2,\dots,n\}$ to each $T\in Y_n$, that is to say, to each vertex of the associahedron.
For instance, the vertices of $a_4$ correspond to the posets in Figure~\ref{figure:Y5RBTexample2}.
Notice that these posets have a minimum element and are connected, with each element covered by at most two other elements.
They also depend on the linear order $\ell$ that is used.
We call posets that can be obtained in this way \Def{rooted binary tree (RBT) posets} and denote the set of all such posets $\operatorname{RBT}_\ell=\{p_\ell(T):T\in Y_n\}$ for the linear order $\ell$.

The following result can be proven utilizing work by Postnikov \cite[Proposition 7.10]{Postnikov}, but we provide a direct proof which uses the cover relations of the Tamari lattice.

\begin{theorem}\label{thm:brionmapassociahedra}
    The Brion map on the Hopf monoid of associahedra is given by
    \begin{equation}
        B(a_\ell) = \sum_{p\in\operatorname{RBT}_\ell} p.
        \label{eq:brionassoc}
    \end{equation}
\end{theorem}
 \begin{proof}
	We will show that the poset $\poset_{v_\ell(T)}(a_\ell)$ corresponding to the tangent cone of $a_\ell$ at the vertex $v_\ell(T)$ is the poset $p_\ell(T)$.
	Recall that the interior edges of $T$ are in bijection with tree rotations in $T$.
	So consider an internal edge of $T$, say between the vertices labelled $\ell_i$ and $\ell_j$ by the binary search labelling on $T$.
	Without loss of generality suppose $\ell_i$ is a child of $\ell_j$.
	Then there are two cases: $\ell_i$ is either a left child or a right child. 
         \begin{enumerate}[label=(\alph*)]
             \item Suppose $\ell_i$ is a left child of $\ell_j$, meaning the tree locally looks like
         \[
             \begin{tikzpicture}[scale=0.50,baseline=-0.2ex]
    \draw (-1,1) -- (0,0);
    \draw (-0.5,0.5)--(0,1);
    \draw (1,1)-- (0,0);
    \draw (0,0) -- (0,-0.5);

    \node[left] at (-0.5,0.5) {$\ell_i$};
    \node[left] at (0,-0.3) {$\ell_j$};
    
    \draw[gray] (0.75,1) rectangle (1.25,1.5);
    \draw[gray] (-0.25,1) rectangle (0.25,1.5);
    \draw[gray] (-1.25,1) rectangle (-0.75,1.5);
    
    \draw[gray] (-0.25,-0.5) rectangle (0.25,-1);
\end{tikzpicture}.
         \]
	Recall that $l_k$ and $r_k$, the numbers of leaves to the left and right of node $\ell_k$, respectively, are used to define the vertex $v_\ell(T)$ of the associahedron.
	Notice that $l_j= l_i + r_i$ since $i$ is a left child of $j$.
         \begin{figure}
             \centering
             \begin{tikzpicture}[scale=0.250,baseline=-0.2ex]
    \node[] at (0,8.5) {$T$};
    
    \draw (-4,4) -- (0,0);
    \draw (-2,2)--(0,4);
    \draw (4,4)-- (0,0);
    \draw (0,0) -- (0,-2);

    \node[left] at (-2,1.4) {$\ell_i$};
    \node[left] at (0,-0.8) {$\ell_j$};
    
    \draw[gray] (3,4) rectangle (5,6);
    \node[] at (4,5) {$r_j$};
    
    \draw[gray] (-1,4) rectangle (1,6);
    \node[] at (0,5) {$r_i$};
    
    \draw[gray] (-5,4) rectangle (-3,6);
    \node[] at (-4,5) {$l_i$};
    
    \draw[gray] (-5.5,3.5) rectangle (1.5,7);
    \node[] at (-2,5.75) {$l_j$};
    
    \draw[gray] (-1,-2) rectangle (1,-4);
    
    \node[draw,opacity=0] at (5,0) {.};
\end{tikzpicture}\begin{tikzpicture}[scale=0.250,baseline=-0.2ex]
    \node[] at (0,8.5) {$T'$};
    
    \draw (-4,4) -- (0,0);
    \draw (2,2)--(0,4);
    \draw (4,4)-- (0,0);
    \draw (0,0) -- (0,-2);
    
    \draw[->] (-9,2) -- (-6,2);
    
    \node[right] at (2,1.4) {$\ell_j$};
    \node[right] at (0,-0.8) {$\ell_i$};
    
    \draw[gray] (3,4) rectangle (5,6);
    \node[] at (4,5) {$r_j'$};
    
    \draw[gray] (-1,4) rectangle (1,6);
    \node[] at (0,5) {$l_j'$};
    
    \draw[gray] (-5,4) rectangle (-3,6);
    \node[] at (-4,5) {$l_i'$};
    
    \draw[gray] (-1.5,3.5) rectangle (5.5,7);
    \node[] at (2,5.75) {$r_i'$};
    
    \draw[gray] (-1,-2) rectangle (1,-4);
\end{tikzpicture}
             \caption{Tree rotation associated to the edge connecting $\ell_i$ and $\ell_j$ in $T$.}
             \label{fig:associahedraleaves1}
         \end{figure}

	Now consider the tree $T' \in Y_n$ resulting from performing a tree rotation on the edge connecting $\ell_i$ and $\ell_j$ in $T$ (Figure~\ref{fig:associahedraleaves1}).
	Let $l_k', r_k'$ be the numbers of leaves to the left and right of node $\ell_k$ in $T'$ for $1\le k\le n$.
    If $\ell_k$ is a descendent of $\ell_i$ or a right descendent of $\ell_j$ in $T$, then its position relative to its descendant leaves does not change under the rotation.
    If $\ell_k$ is below $\ell_j$ in $T$, then the sub-tree where rotation occurs is either entirely to its left or entirely to its right.
	Thus for $k\ne i,j$, we have $l_k=l_k'$ and $r_k=r_k'$.
	Moreover $l_i' = l_i$, $r_i' = r_i + r_j$, $l_j' = r_i$, and $r_j' = r_j$ as illustrated in Figure~\ref{fig:associahedraleaves1}.
	From this relation between the leaves of $T$ and $T'$ we can see that
         \begin{align*}
             v_\ell(T) &= (l_i r_i) e_i + (l_j r_j) e_j + \sum_{k \in [n] \setminus \{i,j\}} l_k r_k e_k \\
             &= (l_i r_i) e_i + ((l_i+r_i) r_j) e_j + \sum_{k \in [n] \setminus \{i,j\}} l_k r_k e_k
         \end{align*}
         and 
         \begin{align*}
             v_\ell(T') &= (l_i' r_i') e_i + (l_j' r_j') e_j + \sum_{k \in [n] \setminus \{i,j\}} l_k' r_k' e_k \\
             &= (l_i (r_i+r_j)) e_i + (r_i r_j) e_j + \sum_{k \in [n] \setminus \{i,j\}} l_k r_k e_k.
         \end{align*}
         Subtracting these gives the edge direction oriented from $v_\ell(T)$ to $v_\ell(T')$:
         \begin{align*}
             v(T') - v(T) &= (l_i (r_i+r_j)) e_i + (r_i r_j) e_j - \left( (l_i r_i) e_i + ((l_i+r_i) r_j) e_j \right) \\
             &= (l_i r_i) e_i+ (l_i r_j) e_i + (r_i r_j) e_j - (l_i r_i) e_i- (l_i r_j) e_j - (r_i r_j) e_j \\
             &= (l_i r_j) e_i - (l_i r_j) e_j \\
             &= (l_i r_j)(e_i - e_j)
         \end{align*}
         Thus $i \geq j$ in $\poset_{v_\ell(T)}(a_\ell)$.
        
         \item Now suppose that $\ell_i$ is a right child of $\ell_j$, meaning we have the local sub-tree
         \[
             \begin{tikzpicture}[scale=0.50,baseline=-0.2ex]
    \draw (-1,1) -- (0,0);
    \draw (0.5,0.5)--(0,1);
    \draw (1,1)-- (0,0);
    \draw (0,0) -- (0,-0.5);
    
    \node[right] at (0.5,0.3) {$\ell_i$};
    \node[right] at (0,-0.2) {$\ell_j$};
    
    \draw[gray] (0.75,1) rectangle (1.25,1.5);
    \draw[gray] (-0.25,1) rectangle (0.25,1.5);
    \draw[gray] (-1.25,1) rectangle (-0.75,1.5);
    
    \draw[gray] (-0.25,-0.5) rectangle (0.25,-1);
\end{tikzpicture}.
         \]
         The argument is analogous to the previous case, except we must reverse the roles of $T$ and $T'$ to find that the edge direction is $l_j r_i (e_i - e_j)$.
         This still gives the same covering relation $i \geq j$ in $\poset_{v_\ell(T)}(a_\ell)$.
    \end{enumerate}

    Note that in addition to the edge directions of the associahedron, we have also recovered the lengths of the edges $l_jr_i$.
    Thus the covering relations in $\poset_{v_\ell(T)}(a_\ell)$ are exactly those of $p_\ell(T)\in \operatorname{RBT}_\ell$.
    \end{proof}
        
    
    For a given $p\in \operatorname{RBT_\ell}$, a \Def{maximal element} of $p$ is an element not covered by any other element.
    We use $\mv{p}$ to denote the number of maximal elements in $p$.
    Define the subposet $p_{\geq v}:= \{w \in p : w \geq v \}$ with the order inherited from the poset $p$.
    An element $v$ of $p$ is a \Def{symmetric element} if it is covered by two elements $v_1$ and $v_2$ where the sub-posets $p_{\geq v_1}$ and $p_{\geq v_2}$ are isomorphic.
    We will denote the number of symmetric elements by $\sv{p}$.

    To get an expression for the Brion map on the Hopf algebra of associahedra, when applying the Fock functor, on $\nA$ the associahedra resulting in the same set when forgetting the linear order $\ell$ are identified and when applied on $\P$ the posets $\operatorname{RBT}_\ell$ which are relabelings are identified.
    Hence, up to relabelling, many terms appear more than once in equation~\eqref{eq:brionassoc}.
    Let $\operatorname{RBT_n}$ be the set of all \emph{unlabelled} RBT posets on $n$ elements.
    These are obtained by taking the pruned representations of rooted binary trees with $n+1$ leaves, forgetting the labels, and then forgetting the distinction between left and right children, see Figure~\ref{fig: RBTPoset}.

 \begin{figure}
    \centering
    \scalebox{0.7}{
    \begin{tikzpicture}[nod/.style={draw=black,fill=black,circle,inner sep=1.5pt}, cover/.style={ thick, norange}, scale=0.45]

\def\x{15}
\def\y{9}
\def\z{0}

\draw (8,10) -- (8,-3);

\draw (8+\x,10) -- (8+\x,-3);
\draw (-2+\x,8) -- (12+\x,8);

\draw (-2+\x,4) -- (12+\x,4);

\draw (-2,8) -- (12,8);

\node at (3,9) {$T$};
\node at (10,9) {$p(T)$};

\node at (3+\x,9) {$T$};
\node at (10+\x,9) {$p(T)$};

\draw (2,0) node[nod]{} -- (1,1) node[nod]{} -- (0,2) node[nod]{} -- (-1,3) node[nod] {}; 

\draw (1,4) node[nod]{} -- (2,5) node[nod]{} -- (1,6) node[nod]{} -- (0,7) node[nod] {}; 

\draw (4,4) node[nod]{} -- (5,5) node[nod]{} -- (4,6) node[nod]{} -- (5,7) node[nod] {}; 

\node at (4.5,1.5) {$\cdots$};

\draw (10,2) node[nod]{} -- (10,3) node[nod]{} -- (10,4) node[nod]{} -- (10,5) node[nod]{};

\draw (0+\x,-2+\y) node[nod]{} -- (1+\x,-3+\y) node[nod]{} -- (2+\x,-2+\y) node[nod]{};
\draw (1+\x,-3+\y) -- (2+\x,-4+\y) node[nod]{};

\draw (4+\x,-2+\y) node[nod]{} -- (5+\x,-3+\y) node[nod]{} -- (6+\x,-2+\y) node[nod]{};
\draw (5+\x,-3+\y) -- (4+\x,-4+\y) node[nod]{};

\draw (9+\x,-2+\y+\z) node[nod]{} -- (10+\x,-3+\y+\z) node[nod]{} -- (11+\x,-2+\y+\z) node[nod]{};
\draw (10+\x,-3+\y+\z) -- (10+\x,-4+\y+\z) node[nod]{};

\draw (0+\x,-7+\y) node[nod]{} -- (1+\x,-8+\y) node[nod]{} -- (2+\x,-7+\y) node[nod]{};
\draw (2+\x,-7+\y) -- (1+\x,-6+\y) node[nod]{};

\draw (4+\x,-7+\y) node[nod]{} -- (5+\x,-8+\y) node[nod]{} -- (6+\x,-7+\y) node[nod]{};
\draw (6+\x,-7+\y) -- (7+\x,-6+\y) node[nod]{};

\draw (0+\x,-10+\y) node[nod]{} -- (1+\x,-11+\y) node[nod]{} -- (2+\x,-10+\y) node[nod]{};
\draw (0+\x,-10+\y) -- (-1+\x,-9+\y) node[nod]{};

\draw (4+\x,-10+\y) node[nod]{} -- (5+\x,-11+\y) node[nod]{} -- (6+\x,-10+\y) node[nod]{};
\draw (4+\x,-10+\y) -- (5+\x,-9+\y) node[nod]{};

\draw (9+\x,-8.5+\y) node[nod]{} -- (10+\x,-9.5+\y) node[nod]{} -- (11+\x,-8.5+\y) node[nod]{};
\draw (9+\x,-8.5+\y) -- (9+\x,-7.5+\y) node[nod]{};

\end{tikzpicture}
    }
    \caption{Unlabelled rooted binary trees on the left and posets on the right.}
    
    \label{fig: RBTPoset}
\end{figure}

 \begin{theorem}\label{thrm:brionassociahedraalgebra}
    The Brion map on the Hopf algebra of associahedra is given by
    $$
    B(a_n) = \sum_{p\in\operatorname{RBT_n}} 2^{n-\mv{p}-\sv{p}} \, p \qquad \text{for } n \in \mathbb{N}
    $$
    where $\mv{p}$ and $\sv{p}$ are the numbers of maximal and symmetric elements, respectively, of the poset $p$.
\end{theorem}
\begin{proof}
    Let $p\in\operatorname{RBT}_n$.
    We want to count how many rooted binary trees $T \in Y_n$ turn into the poset diagram for $p$ when we forget the distinction between left and right children.
    Working backwards from an RBT poset $p$, we need to count the number of distinct ways to assign left and right branches to each node of $p$.
    If a node is maximal in $p$, then no assignment of left and right branches can be made.
    Likewise no assignment needs to be made for symmetric nodes, since both possibilities would result in the same RBT.
    All other nodes in $p$ have two ways to assign left and right branches, and these choices are all distinct.
    Thus in total there are $2^{n-\mv{p}-\sv{p}}$ different ways to assign left and right branches to the poset $p$ and obtain an RBT in $Y_n$.
    This gives us the coefficient for any RBT poset which appears in the Brion map, and since all rooted binary tree posets appear, we can conclude that 
         $$B(a_{n}) = \sum_{p\in \operatorname{RBT}_n} 2^{n-\mv{p}-\sv{p}} p.$$
\end{proof}
    
For an example of the previous theorem, consider Figure~\ref{fig:briona3example} which evaluates the Brion map for $a_3$. 

\begin{figure}
    \centering
    \scalebox{1}{
    \begin{tikzpicture}[nod/.style={draw=black,fill=black,circle,inner sep=1.5pt}, cover/.style={ thick, norange}, scale=0.6]

\def\x{9.5}
\def\y{2.5}

\node at (0,0) {$B(a_3) =$};
\node at (2.5,0) {$2^{3-1-0}$};

\draw[thick] (4,-1) node[nod] {} -- (4,0) node[nod] {} -- (4,1) node[nod] {};

\node at (4.5,0) {$+$};
\node at (6,0) {$2^{3-2-1}$};

\draw[thick] (7.5,0.5) node[nod] {} -- (8.5,-0.5) node[nod]{} -- (9.5,0.5) node[nod] {};

\node at (1.25+\x,-2.5+\y){$= 4$};

\draw[thick] (2.5+\x,-3.5+\y) node[nod] {} -- (2.5+\x,-2.5+\y) node[nod] {} -- (2.5+\x,-1.5+\y) node[nod] {};
\node at (3.5+\x,-2.5+\y) {$+$};
\node at (4+\x,-2.5+\y) {$1$};
\draw[thick] (5+\x,-2+\y) node[nod] {} -- (6+\x,-3+\y) node[nod]{} -- (7+\x,-2+\y) node[nod] {};
\end{tikzpicture}
    }
    \caption{The Brion map (for the Hopf algebra) on $a_3$.}
    \label{fig:briona3example}
\end{figure}

We also obtain the following enumerative corollary.
\begin{corollary}\label{cor:catalan}
    The $n$th Catalan number can be expressed as 
    \[
        C_n = \sum_{p\in \operatorname{RBT}_n} 2^{n - \mv{p} - \sv{p}}.
    \]
\end{corollary}

The result above allows for a more visual proof of a classical result.
In essence, you can only have the tree given in Figure~\ref{fig:symmetricRBTposet} when the number of vertices is $n=2^k -1$.

\begin{corollary}
    The $n$th Catalan number $C_n$ is odd if and only if $n=2^k -1$ for some $k \in \mathbb{N}$.
\end{corollary}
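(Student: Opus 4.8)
The plan is to read off the parity of $C_n$ directly from \Cref{cor:Catalan} by reducing that identity modulo $2$. Every summand $2^{\,n-\mv{p}-\sv{p}}$ is a nonnegative integer power of $2$: the maximal elements and the symmetric elements of $p$ form disjoint subsets of its $n$ elements (a maximal element is covered by no element, a symmetric one by exactly two), so $\mv{p}+\sv{p}\le n$ and each exponent is $\ge 0$. Such a power of $2$ is odd exactly when its exponent is $0$ and even otherwise, so reducing modulo $2$ yields
$$
C_n \equiv \#\{\,p\in\operatorname{RBT}_n : \mv{p}+\sv{p}=n\,\} \pmod 2 .
$$
It therefore suffices to determine for which $n$ this count is odd.

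Next I would characterize the posets attaining $\mv{p}+\sv{p}=n$. Since the maximal and symmetric elements are disjoint, the equality forces \emph{every} element of $p$ to be either maximal or symmetric; equivalently, no element is covered by exactly one element, and every element covered by two elements is symmetric. Viewing $p$ as an unordered rooted binary tree whose root is its minimum and whose children are its covering elements (recall that RBT posets are connected, have a minimum, and have each element covered by at most two others), this says precisely that every internal node has exactly two children whose up-sets $p_{\ge v_1}$ and $p_{\ge v_2}$ are isomorphic as posets. I will call such a poset \emph{balanced}.

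I would then prove by induction on $n$ that the balanced posets are exactly the perfect binary trees, i.e.\ the complete binary trees in which all leaves lie at the same depth. A single element is balanced and perfect. If the root of a balanced poset has two children, their up-sets are isomorphic and each is itself balanced, hence by induction a perfect binary tree of some common height; joining two copies beneath a new root produces a perfect binary tree of one greater height. A perfect binary tree of height $d$ has $2^{d+1}-1$ elements and is unique up to isomorphism, so there is exactly one balanced poset in $\operatorname{RBT}_n$ when $n=2^{k}-1$ for some $k\in\mathbb{N}$, and none otherwise. Combined with the displayed congruence, this shows that $C_n$ is odd precisely when $n=2^{k}-1$.

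The step I expect to be the main obstacle is the characterization: one must argue carefully that $\mv{p}+\sv{p}=n$ simultaneously excludes one-child nodes and asymmetric two-child nodes, and then feed the recursive definition of a symmetric element into a clean induction that establishes both perfection and uniqueness. The modular reduction and the final node count are then routine.
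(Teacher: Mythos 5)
Your proof is correct and follows essentially the same route as the paper: both reduce the identity of \Cref{cor:Catalan} modulo $2$, identify the posets with $\mv{p}+\sv{p}=n$ (the paper's ``totally symmetric'' posets, your ``balanced'' ones) as exactly the perfect binary trees, and conclude there is exactly one such poset in $\operatorname{RBT}_n$ when $n=2^k-1$ and none otherwise. Your write-up is somewhat more careful than the paper's sketch---in particular you explicitly rule out two-child nodes with non-isomorphic up-sets and give the induction for uniqueness---but the underlying argument is the same.
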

\begin{proof}
     We have expressed $C_n$ as a sum of powers of two which depends on RBT posets.
     For a given poset $p$, we have $n-\mv{p} - \sv{p} = 0$ if and only if all nodes are either symmetric or maximal.
     This means every node has either $0$ or $2$ children.
     Call an RBT poset $p$ with this property \Def{totally symmetric}, see Figure~\ref{fig:symmetricRBTposet} for an example.
     By Corollary~\ref{cor:catalan} it is clear that $C_n$ is odd if and only if there are an odd number of totally symmetric posets in $\operatorname{RBT}_n$.

    By definition, all totally symmetric RBT posets must be constructed as follows.
    We start with a root (level 1), which has two children (level 2), which each have two children (level 3), and so on.
    If we repeat this process to level $k$, then the resulting has $2^k-1$ nodes, each of which is either maximal or symmetric.
    \begin{figure}
        \centering
        \begin{tikzpicture}[vertex/.style={draw=black,fill=black,circle,inner sep=1.5pt}, scale=0.35]

\coordinate (a) at (0,0);

\coordinate (b1) at (2,2);
\coordinate (b2) at (-2,2);

\coordinate (c1) at (3.75,4);
\coordinate (c2) at (1.25,4);
\coordinate (c3) at (-1.25,4);
\coordinate (c4) at (-3.75,4);

\coordinate (d1) at (-4.5,6);
\coordinate (d2) at (-3,6);
\coordinate (d3) at (-2,6);
\coordinate (d4) at (-0.5,6);

\coordinate (d5) at (0.5,6);
\coordinate (d6) at (2,6);
\coordinate (d7) at (3,6);
\coordinate (d8) at (4.5,6);

\node[vertex] at (a) {};
\node[vertex] at (b1) {};
\node[vertex] at (b2) {};
\node[vertex] at (c1) {};
\node[vertex] at (c2) {};
\node[vertex] at (c3) {};
\node[vertex] at (c4) {};
\node[vertex] at (d1) {};
\node[vertex] at (d2) {};
\node[vertex] at (d3) {};
\node[vertex] at (d4) {};
\node[vertex] at (d5) {};
\node[vertex] at (d6) {};
\node[vertex] at (d7) {};
\node[vertex] at (d8) {};

\draw[thick] (a) -- (b1);
\draw[thick] (a) -- (b2);

\draw[thick] (b1) -- (c1);
\draw[thick] (b1) -- (c2);

\draw[thick] (b2) -- (c3);
\draw[thick] (b2) -- (c4);

\draw[thick] (c1) -- (d8);
\draw[thick] (c1) -- (d7);
\draw[thick] (c2) -- (d6);
\draw[thick] (c2) -- (d5);
\draw[thick] (c3) -- (d4);
\draw[thick] (c3) -- (d3);
\draw[thick] (c4) -- (d2);
\draw[thick] (c4) -- (d1);

\node at (1.25,7) {\Large$\mathbf{\vdots}$};
\node at (-1.25,7) {\Large$\mathbf{\vdots}$};
\node at (3.75,7) {\Large$\mathbf{\vdots}$};
\node at (-3.75,7) {\Large$\mathbf{\vdots}$};
\end{tikzpicture}
        \label{fig:symmetricRBTposet}
        \caption{A totally symmetric RBT poset.}
    \end{figure}
    Thus the number of totally symmetric posets in $\operatorname{RBT}_n$ is $1$ if $n=2^k-1$ for some $k\in \mathbb{N}$ and $0$ otherwise.
    \end{proof}

\subsection{The Brion Map on Orbit Polytopes}\label{subsec:brionmaporbit}

We now consider the Brion map on orbit polytopes, making use of the correspondence between orbit polytopes and integer compositions from Section~\ref{sec:orbitpolytopes}.
Given a composition $\lambda = (\lambda_1, \lambda_2, \ldots,\lambda_k)$ of the integer $n$, note that the number of vertices of an orbit polytope in the class $\mathcal{O}_\lambda$ is $\frac{n!}{\lambda_1! \cdots \lambda_k!}$.
Construct the unlabelled ${\lambda}$\Def{-layered poset} $p(\lambda)$ with $\lambda_1$ nodes on the first level, $\lambda_2$ on the level above that, and so on until the top level which has $\lambda_k$ nodes; include all possible covering relations between each consecutive pair of levels, see Figure~\ref{fig:lambdalayerposetexample} for an example.

\begin{figure}
    \centering
    \scalebox{0.6}{
    \begin{tikzpicture}[
	edge/.style={color=ngreen},
	facet/.style={fill=ngreen,fill opacity=0.300000}],scale=0.75]


\foreach \x in {0,...,2}{ 
    \coordinate[draw=black,fill=black,circle,inner sep=1.5pt] (\x0) at (\x-0.5*2,0);
    }

\foreach \x in {0,...,0}{
    \coordinate[draw=black,fill=black,circle,inner sep=1.5pt] (\x1) at (\x-0.5*0,1);
    }

\foreach \x in {0,...,4}{
    \coordinate[draw=black,fill=black,circle,inner sep=1.5pt] (\x2) at (\x-0.5*4,2);
    }
    
\foreach \x in {0,...,1}{
    \coordinate[draw=black,fill=black,circle,inner sep=1.5pt] (\x3) at (\x-0.5*1,3);
    }

\foreach \x in {0,...,4}{
    \coordinate[draw=black,fill=black,circle,inner sep=1.5pt] (\x4) at (\x-0.5*4,4);
    }


\foreach \y in {0,...,2}{
    \draw (\y0) -- (01); 
}

\foreach \x in {0,...,4}{
    \foreach \y in {0,...,0}{
        \draw (\y1) -- (\x2); 
    }
}

\foreach \x in {0,...,1}{
    \foreach \y in {0,...,4}{
        \draw (\y2) -- (\x3);
    }
}

\foreach \x in {0,...,4}{
    \foreach \y in {0,...,1}{
        \draw (\y3) -- (\x4);
    }
}

\end{tikzpicture}
    }
    \caption{The $\lambda$-layered poset for $\lambda = (3,1,5,2,5)$.}
    \label{fig:lambdalayerposetexample}
\end{figure}

\begin{theorem}\label{thm:brionmaporbit}
    For $\Or_\lambda\in\nOP[I]$, the Brion map on the Hopf monoid of orbit polytopes is given by
    \[
        B( \mathcal{O}_\lambda ) = \sum_{\substack{p\text{ a labelling}\\\text{of }p(\lambda)\text{ by }I}} p.
    \]
\end{theorem}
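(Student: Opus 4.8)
The plan is to compute the tangent-cone poset at each vertex of $\Or_\lambda$ directly and then match the resulting labelled posets with labellings of $p(\lambda)$. First I would recall from \eqref{pcomp} that the vertices of $\Or_\lambda$ are exactly the rearrangements of the point $p$, so a vertex $v$ is the same data as an assignment of values to the coordinates in which the $j$th largest value is used $\lambda_j$ times. Equivalently, $v$ records an ordered set partition $I = I_1 \sqcup \cdots \sqcup I_k$, where $I_j$ is the set of coordinates carrying the $j$th largest value and $|I_j| = \lambda_j$. There are $\tfrac{n!}{\lambda_1!\cdots\lambda_k!}$ such partitions, matching the vertex count recorded above.

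The heart of the argument is identifying $\poset_v(\Or_\lambda)$. I would compute the normal cone $\mathcal{N}_{\Or_\lambda}(v)$ using the rearrangement inequality: a functional $y$ attains its maximum over the rearrangements of $p$ at $v$ precisely when $y_a \ge y_b$ whenever $v_a > v_b$. Thus $\mathcal{N}_{\Or_\lambda}(v)$ is the braid cone of the preorder on $I$ in which $a \preceq b$ iff $v_a \le v_b$, and its polar --- the tangent cone $\cone_v(\Or_\lambda)$ --- is $\cone(e_i - e_j : v_i < v_j)$. By the poset--cone correspondence of \Cref{subsec:gp}, this is the cone of the poset with $i \ge j$ exactly when $v_i < v_j$: the elements are grouped into the layers $I_1, \dots, I_k$ ordered with the smallest values on top, and every element of one layer lies below every element of the next. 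This is precisely a labelling of the layered poset $p(\lambda)$ by $I$, with level $j$ carrying the block $I_j$. As a cross-check in the style of the permutahedron and associahedron computations, one sees the edges at $v$ come from transposing a coordinate $a \in I_{j+1}$ with a coordinate $b \in I_j$, producing edge direction $e_a - e_b$ and hence the covering relation $a \ge b$; the normal-cone computation confirms that transpositions across non-consecutive layers are not edges, so no further relations arise.

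Finally I would assemble the bijection. Since a labelling of $p(\lambda)$ only remembers which elements sit on which level (the nodes within a level are interchangeable), such labellings are exactly the ordered set partitions of $I$ into block sizes $(\lambda_1, \dots, \lambda_k)$. The map $v \mapsto \poset_v(\Or_\lambda)$ sends distinct vertices to distinct labellings, because a vertex is determined by its layers, and every labelling is realized by a suitable value assignment. Hence $v \mapsto \poset_v(\Or_\lambda)$ is a bijection, and summing over the vertices in the definition of the Brion map yields $B(\Or_\lambda) = \sum_{p} p$ over all labellings $p$ of $p(\lambda)$ by $I$, each occurring with coefficient $1$.

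The main obstacle is the tangent-cone computation, and specifically ruling out spurious covering relations: one must be sure that swapping coordinates whose values are not in consecutive layers does not give an edge of $\Or_\lambda$. Handling this through the normal cone and the rearrangement inequality avoids a delicate case analysis of which transpositions produce edges and directly pins down $\cone_v(\Or_\lambda)$.
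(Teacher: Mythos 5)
Your proof is correct, but it takes a genuinely different route from the paper's. The paper works at the single sorted vertex $v = (k,\ldots,k,\ldots,2,\ldots,2,1,\ldots,1)$, asserts that its neighbors are exactly the points obtained by swapping two values from consecutive layers, reads off the edge directions $w-v$ to obtain the covering relations of $p(\lambda)$, and then transports the result to every other vertex via the $S_I$-action, which permutes labels while preserving the unlabelled layered structure. You instead work at an arbitrary vertex and never determine adjacency at all: the rearrangement inequality pins down the normal cone $\mathcal{N}_{\Or_\lambda}(v) = \{y : y_a \ge y_b \text{ whenever } v_a > v_b\}$, and polarity turns this into $\cone_v(\Or_\lambda) = \cone(e_i - e_j : v_i < v_j)$, which is the cone of the labelled layered poset (the generators between non-consecutive layers being redundant by transitivity). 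This buys you something the paper glosses over: the paper's claim about which points are neighbors of $v$ is stated without justification, and your polarity computation both avoids that claim and certifies that no spurious relations can arise. The trade-off is that you invoke the standard fact that the tangent cone, defined in the paper via edge directions, coincides with the (anti-)polar of the normal cone, whereas the paper stays entirely at the level of explicit coordinate computations; your route also dispenses with the symmetry argument, since the computation is uniform in $v$. Both proofs end with the same bookkeeping, identifying vertices and labellings of $p(\lambda)$ with ordered set partitions of $I$ into blocks of sizes $(\lambda_1,\ldots,\lambda_k)$, each labelled poset occurring exactly once.
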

\begin{proof}
    We focus on the case where $I=\{1,2,\dots,n\}$; the proof for general sets $I$ is analogous.
    Recall that $\mathcal{O}_\lambda$ is a normal equivalence class of orbit polytopes, as defined in Section~\ref{subsec:gp}.
     Normally equivalent polytopes have the same vertex cones, so consider the orbit polytope $O\subseteq \mathbb{R}^{[n]}$ of the normal equivalence class $\Or_\lambda$ whose vertices are all permutations of the vertex 
     \[
     v = (\underbrace{k,\ldots,k}_{\lambda_1}, \ldots, \underbrace{2,\ldots,2}_{\lambda_{k-1}}, \underbrace{1,\ldots,1}_{\lambda_k}).
     \]
     We claim that $\poset_v(O) = p(\lambda)$. A neighbor $w$ of $v$ is obtained by swapping the positions of two consecutive values in $v$; i.e. by swapping one of the $1$s with one of the $2$s, one of the $j$'s with one of the $j+1$'s, etc.
     Suppose $w$ is obtained by swapping a $k-(i-1)$ with a $k-i$ in $v$ for some $1\le i< k$:
     $$
      w=(\ldots,\underbrace{k-(i-1),\ldots,\textcolor{blue}{k-i},\ldots,k-(i-1)}_{ \lambda_{i}},\underbrace{k-i,\ldots,\textcolor{blue}{k-(i-1)},\ldots,k-i}_{\lambda_{i+1}},\ldots)
     $$
     This means that the positions where $v$ and $w$ differ are $\lambda_1 + \cdots + \lambda_{i-1}+a$ and $\lambda_1 + \cdots + \lambda_{i}+b$ for some $1\le a \le \lambda_i$ and $1 \leq b \leq \lambda_{i+1}$, and the edge direction vector $w-v$ is $e_{\lambda_1 + \cdots + \lambda_{i}+b} - e_{\lambda_1 + \cdots + \lambda_{i-1}+a}$.
     Hence we have the relation $\lambda_1 + \cdots + \lambda_{i}+b \geq \lambda_1 + \cdots + \lambda_{i-1}+a$ in $\poset_v(O)$ for any choices of $1 \leq a \leq \lambda_i$ and $1 \leq b \leq \lambda_{i+1}$.
     From this we can deduce that $\poset_v(O)$ is the $\lambda$-layered poset labelled with $1,2,\dots,\lambda_1$ on the first level, $\lambda_1+1,\lambda_1+d,\dots,\lambda_1+\lambda_2$ on the second level, and so on.
     
     To find the vertex cone for another vertex of $O$, we can permute the coordinates of points in $O$ by any element of $S_n$.
     This would permute the labels of corresponding poset, but would maintain the underlying unlabelled structure.
     Thus the vertex posets are all labellings of $p(\lambda)$.
\end{proof} 

\begin{theorem}
    The Brion map on the Hopf algebra of orbit polytopes is given by,
    $$B( \mathcal{O}_\lambda ) = \frac{n!}{\lambda_1! \cdots \lambda_k!} p(\lambda).$$
\end{theorem}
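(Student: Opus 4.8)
The plan is to derive this statement from the Hopf monoid version of the Brion map on orbit polytopes by passing to the Hopf algebra via the Fock functor $\mathcal{F}$ described in \Cref{subsec:hopfmonoids}, which forgets the labellings of the underlying combinatorial objects. In the Hopf monoid $\nOP$ we have already established that $B(\mathcal{O}_\lambda) = \sum p$, where the sum ranges over all labellings $p$ of the unlabelled $\lambda$-layered poset $p(\lambda)$ by the ground set $I$. Applying $\mathcal{F}$ collapses each labelled poset in this sum to the single unlabelled poset $p(\lambda)$, so the only task that remains is to count how many distinct labelled posets appear in the sum.

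For the count, I would observe that two labellings of $p(\lambda)$ yield the same labelled poset precisely when they assign the same set of labels to each level: within a fixed level the nodes are indistinguishable, since they share identical covering relations to the adjacent levels and carry no relations among themselves. Hence a labelled poset is determined exactly by the ordered set partition of $I$ into blocks of sizes $\lambda_1, \ldots, \lambda_k$, and the number of such partitions is the multinomial coefficient $\frac{n!}{\lambda_1! \cdots \lambda_k!}$. Equivalently, one can appeal directly to the bijection between these labelled posets and the vertices of $\mathcal{O}_\lambda$ from the proof of the preceding theorem, together with the fact (noted at the start of \Cref{subsec:BrionMap Orbit}) that $\mathcal{O}_\lambda$ has exactly $\frac{n!}{\lambda_1! \cdots \lambda_k!}$ vertices.

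Combining these two steps, each of the $\frac{n!}{\lambda_1! \cdots \lambda_k!}$ labelled summands becomes $p(\lambda)$ after forgetting labels, and summing yields $B(\mathcal{O}_\lambda) = \frac{n!}{\lambda_1! \cdots \lambda_k!}\, p(\lambda)$. I expect the main obstacle to be a careful justification of the counting step --- specifically, confirming that distinct labellings collapse exactly according to the level-wise symmetry so that no labelled poset is over- or under-counted --- rather than any difficulty in the functorial passage itself, which is routine.
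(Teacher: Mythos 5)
Your proposal is correct and takes essentially the same route as the paper: pass from the Hopf monoid statement to the Hopf algebra via the Fock functor, and count the labellings of $p(\lambda)$ level by level, obtaining the multinomial coefficient $\frac{n!}{\lambda_1!\cdots\lambda_k!}$ (which the paper, like you, also identifies with the vertex count of a representative of $\Or_\lambda$). Your justification that a labelled poset is determined exactly by the ordered set partition of $I$ into level sets makes explicit a point the paper leaves implicit, but the argument is the same.
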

\begin{proof}
    Choose $\lambda_1$ labels for the bottom level of $p(\lambda)$, $\lambda_2$ labellings for level 2, and so on.
    Thus the number of such labellings is the multinomial coefficient $\frac{n!}{\lambda_1! \cdots \lambda_k!}$.
    Alternatively, this is the number of vertices of a representative of $\Or_\lambda$.
\end{proof}
\section{The Dual Brion Map for \texorpdfstring{$\nPI$, $\nA$, and $\nOP$}{permutahedra, associahedra, and orbit polytopes}}\label{sec:DualBrionMap}

The \Def{dual Brion map} $B^*:\mathbf{P}^*\to\GP^*$ is the map dual to the Brion map $B:\GP \to \P$ which is defined linearly on the basis elements by $p^* \mapsto p^* \circ B$.
This is again a morphism of Hopf monoids.
 
Since $\mathbf{P}^*$ and $\GP^*$ are cocommutative, by Theorem~\ref{thrm:CMM} we may view these Hopf monoids as the universal enveloping monoids of their respective primitives.
Hence we do not lose information by considering the restriction of the dual Brion map to the primitive elements:
$B^*\vert_{\mathcal{P}(\mathbf{P}*)}:\mathcal{P}(\mathbf{P}^*)\to\mathcal{P}(\GP^*).$

We can further restrict the image of this dual map to specific submonoids $\nPI^*, \nA^*, \nOP^*$ which is a dual notion to what we did in Section~\ref{sec:brionmap} when restricting our domain. Because we have inclusion maps $\iota_{\nPI}: {\nPI} \to {\nGP}$, $\iota_{\nA}: {\nA} \to {\nGP}$, $\iota_{\nOP}: {\nOP} \to {\nGP}$
which are injective, the dual maps $\iota_{\nPI}^*,\, \iota_{\nA}^*,\, \iota_{\nOP}^*$ are surjective. 
We can again restrict $\iota^*$ to the primitives. Denote the following compositions 
\[
B_{\nPI}^*=\iota_{\nPI}^*\vert_{\mathcal{P}(\nGP^*)} \circ B^*\vert_{\mathcal{P}(\mathbf{P}^*)},
\quad B_{\nA}^*=\iota_{\nA}^*\vert_{\mathcal{P}(\nGP^*)} \circ B^*\vert_{\mathcal{P}(\mathbf{P}^*)}, \quad B_{\nOP}^*=\iota_{\nOP}^*\vert_{\mathcal{P}(\nGP^*)} \circ B^*\vert_{\mathcal{P}(\mathbf{P}^*)}.
\]
This dual inclusion map selects the terms in the image of $B^*$ which are the linear functionals associated to the primitives in each respective Hopf monoid.
Finally, composing with the Fock functor allows us to pass to Hopf algebras and define the maps we will now explore: $B_{\Pi}^* = \mathcal{F}( B_{\nPI}^*), B_{A}^* = \mathcal{F} (B_{\nA}^*), B_{OP}^*=\mathcal{F} (B_{\nOP}^*)$.  

Since the primitives given by elements $p^*\in\mathbf{P}^*$ are dual to connected posets $p\in\mathbf{P}$, we have that $B^*(p^*)$ is a linear functional sending $Q\in\nGP$ to the coefficient of the poset $p$ in the evaluation of the Brion map $B(Q)$.
If $Q\in \pp{\nGP}$ is a single generalized permutahedron, then $B^*(p^*)(Q)$ will be the coefficient of the Brion map if the vertex poset of $Q$ at some vertex is the same as $p$, and $0$ otherwise.

\begin{theorem}\label{thm:dualbrionmap}
    The dual Brion map restricted to permutahedra, associahedra, and orbit polytopes is given as follows:
    \begin{enumerate}[label=(\roman*)]
        \item The dual Brion map over the dual Hopf algebra of permutahedra is defined as
    \[B_{\Pi}^*(p^*) = \begin{cases}
        n! \, \pi_n^* & \text{if } p \text{ is an unlabeled chain of } n \text{ nodes}, \\
        0 & \text{otherwise}.
    \end{cases}\]

    \item The dual Brion map over the dual Hopf algebra of associahedra is defined as
    \[
    B_{A}^*(p^*) = \begin{cases}
        2^{n-\mv{p}-\sv{p}} \, a_n^* & \text{if } p \text{ is an unlabeled RBT poset on } [n], \\
        0 & \text{otherwise.}
    \end{cases} \]

    \item The dual Brion map over the dual Hopf algebra of orbit polytopes is defined as
    \[
    B_{OP}^*(p^*) = \begin{cases}
        \frac{n!}{\lambda_1! \cdots \lambda_k!} \, \Or_\lambda^* & \text{if } p \text{ is } P(\lambda) \text{ for some } \lambda, \\
        0 & \text{otherwise.}
    \end{cases}
    \]
    \end{enumerate}
\end{theorem}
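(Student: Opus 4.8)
The plan is to derive all three formulas uniformly by identifying each restricted dual Brion map with the transpose of the corresponding Brion map on the Hopf algebras $\Pi$, $A$, and $OP$ computed in \Cref{sec: BrionMap}, and then reading off matrix entries. Fix a connected poset $p$ on $n$ nodes, so that $p^*$ is a primitive of $\mathbf{P}^*$ by \Cref{cor: poset primitives}; since $B_\Pi^*$, $B_A^*$, and $B_{OP}^*$ are defined on the primitives of $\mathbf{P}^*$, it suffices to treat this case.

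The first step is structural. Each of $B_\Pi^*$, $B_A^*$, $B_{OP}^*$ is a morphism of Hopf algebras, being the Fock image of a composite of a dual Hopf monoid morphism with a dual inclusion, and hence it sends primitives to primitives. By \Cref{cor: SubGP Primitives}, transported through the Fock functor, the degree-$n$ primitives of $\Pi^*$, $A^*$, and $OP^*$ are spanned respectively by $\pi_n^*$, by $a_n^*$, and by $\{\Or_\lambda^* : \lambda \text{ a composition of } n\}$. Therefore $B_\Pi^*(p^*) \in \langle \pi_n^*\rangle$, $B_A^*(p^*) \in \langle a_n^*\rangle$, and $B_{OP}^*(p^*) = \sum_{\lambda} c_\lambda \Or_\lambda^*$, and it remains only to compute the coefficients.

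The second step reads off these coefficients. For a primitive dual basis element $X^*$, the coefficient of $X^*$ in $B_\bullet^*(p^*)$ equals $B_\bullet^*(p^*)(X) = p^*(B(X))$, i.e. the multiplicity of $p$ in the Brion image $B(X)$, as observed in the discussion preceding the theorem. Substituting the three Hopf algebra Brion maps
\[
B(\pi_n) = n!\, c_n, \qquad B(a_n) = \sum_{q\in\operatorname{RBT}_n} 2^{n-\mv{q}-\sv{q}}\, q, \qquad B(\Or_\lambda) = \frac{n!}{\lambda_1!\cdots\lambda_k!}\, p(\lambda),
\]
and extracting the multiplicity of $p$ yields: the scalar $n!$ exactly when $p$ is the chain $c_n$ (case (i)); the scalar $2^{n-\mv{p}-\sv{p}}$ exactly when $p \in \operatorname{RBT}_n$ (case (ii)); and $c_\lambda = \frac{n!}{\lambda_1!\cdots\lambda_k!}$ exactly when $p = p(\lambda)$. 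Since the layer sizes of $p(\lambda)$ recover $\lambda$, distinct compositions give non-isomorphic layered posets, so at most one $c_\lambda$ is nonzero, giving case (iii). In every remaining (``otherwise'') case the multiplicity is $0$, because chains, RBT posets, and layered posets are the only posets occurring in the respective Brion images, and this forces $B_\bullet^*(p^*) = 0$.

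I expect the main obstacle to be the Fock-functor bookkeeping underlying the first two steps: one must confirm that applying $\mathcal{F}$ to the dualized monoid-level maps (as in the definitions $B_\Pi^* = \mathcal{F}(B_{\nPI}^*)$, etc.) really does yield the transpose of the Hopf algebra Brion map. Concretely, the isomorphism class $\pi_n^*$, $a_n^*$, or $\Or_\lambda^*$ corresponds to the sum of the dual primitives over the whole $S_n$-orbit of labelings, and $B_{\nPI}^*$, $B_{\nA}^*$, $B_{\nOP}^*$ send each labeled primitive to the single surviving dual primitive; it is precisely the collapse of these orbits under $\mathcal{F}$ that reproduces the enumerative constants $n!$, $2^{n-\mv{p}-\sv{p}}$, and $\frac{n!}{\lambda_1!\cdots\lambda_k!}$ with the stated multiplicities. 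Once this identification is pinned down, the rest is the routine coefficient extraction above.
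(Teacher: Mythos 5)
Your proposal is correct and follows essentially the same route as the paper's proof: both reduce the computation to the duality identity $B_\bullet^*(p^*)(X) = p^*(B(X))$ evaluated on the indecomposables $\pi_n$, $a_n$, $\Or_\lambda$, then substitute the Hopf algebra Brion maps computed in \Cref{sec: BrionMap} to extract the coefficients, using \Cref{cor: GP primitives}/\Cref{cor: SubGP Primitives} to justify that only these evaluations matter. Your explicit preliminary step that Hopf morphisms send primitives to primitives, and your flagging of the Fock-functor bookkeeping, are slightly more careful framings of points the paper handles implicitly, but the substance of the argument is the same.
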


\begin{proof}
    When considering the Brion map over Hopf algebras, for a given permutahedron $\pi_n$, associahedron $a_n$, and orbit polytope $\Or_\lambda$ we have
    \[
    B(\pi_n) = n!\, c_n, \quad B(a_n) = \sum_{\text{RBT poset } p} 2^{n-\mv{p}-\sv{p}} \, p, \quad B(\Or_\lambda) = \frac{n!}{\lambda_1! \cdots \lambda_k!} p(\lambda)
    \]
    where $c_n$ is the unlabeled chain on $n$ nodes and $\lambda = (\lambda_1,\ldots,\lambda_k)$ (see Section~\ref{sec:brionmap}).
    We now consider $B^*(p^*)$ which in each case is a linear functional from the Hopf algebra ($\Pi$, $A$, or $OP$) to $\mathbb{R}$.
    Since the primitives in $\GP^*$ corresponded to those generalized permutahedra in $\GP$ which are not products by Theorem~\ref{thm:GPprimitives} we only consider the linear functional evaluated on these elements of $\Pi$, $A$, and $OP$.
    If $p$ is a poset appearing in the Brion map...
    \begin{enumerate}[label=(\roman*)]
        \item ...of $\pi_n\in\Pi$, then $p$ is the unlabeled chain $c_n$ on $n$ nodes, and hence
        \[
            \left(B_{\Pi}^*(p^*)\right)(\pi_n) = p^* (B(\pi_n)) = p^*(n! \, p) = n!
        \]
         Moreover, $\left(B^*(p^*)\right)(\pi_m) = 0$ for $m \neq n$ since $p$ has $n$ nodes but the posets which appear in $B(\pi_m)$ have $m$ nodes.

         \item ...of $a_n\in A$, then $p$ is an unlabeled RBT poset on $n$ nodes, and thus
         \[
         \left(B_{A}^*(p^*)\right)(a_n)= p^*\left( B(a_n) \right) = \sum_{\text{RBT poset } q} 2^{n-\mv{q}-\sv{q}} \, p^*(q) = 2^{n-\mv{p}-\sv{p}} \]
         Moreover, $\left(B^*(p^*)\right)(a_m) = 0$ for $m\ne n$ by the same reasoning as above.

         \item ...of $\Or_\lambda\in OP$, then $p$ is the unlabeled $\lambda$-layered poset $p(\lambda)$ and
         \[
            \left(B_{OP}^*(p^*)\right)(\Or_\lambda)= p^*\left( B(\Or_\lambda) \right) = p^*\left( \frac{n!}{\lambda_1! \cdots \lambda_k!}\, p\right) = \frac{n!}{\lambda_1! \cdots \lambda_k!}
         \]
         Moreover if $\lambda'$ is a different composition than $\lambda$, we have $\left(B^*(p^*)\right)(\Or_{\lambda'}) = 0$.
    \end{enumerate}
    If $p$ is any other poset not appearing as a term in the evaluation of the Brion map, then the linear functional $B^*(p^*)$ is identically $0$. 

\end{proof}

\section{Acknowledgements}
We thank Federico Ardila for valuable guidance and mentorship, and Tracy Camacho for fruitful conversations.
This work first appeared in the first author's Masters thesis, which was co-advised by Federico Ardila and the second author at San Francisco State University.
The second author was partially supported by grant 2018-03968 of the Swedish Research Council as well as the G\"oran Gustafsson Foundation.

\bibliographystyle{plain}
\bibliography{references}

\end{document}